\theoremstyle{definition}
\DeclareMathOperator{\sech}{sech}
\DeclareMathOperator{\Id}{Id}
\DeclareMathOperator{\tr}{tr}
\DeclareMathOperator{\Vol}{Vol}
\DeclareMathOperator{\dvol}{dvol}
\DeclareMathOperator{\Ric}{Ric}
\DeclareMathOperator{\Rm}{Rm}
\DeclareMathOperator{\End}{End}
\DeclareMathOperator{\Sym}{Sym}
\DeclareMathOperator{\coker}{coker}
\DeclareMathOperator{\FredholmIndex}{Index}
\DeclareMathOperator{\Ind}{Ind}
\DeclareMathOperator{\Int}{Int}
\DeclareMathOperator{\csch}{csch}
\newcommand{\Cadm}[1]{C_{#1,\mathrm{adm}}}
\newcommand{\oB}{\overline{B}}
\newcommand{\odelta}{\overline{\delta}}
\newcommand{\onabla}{\overline{\nabla}}
\newcommand{\cg}{\widetilde{g}}
\newcommand{\cw}{\widetilde{w}}
\newcommand{\hg}{\widehat{g}}
\newcommand{\lv}{\lvert}
\newcommand{\rv}{\rvert}
\newcommand{\lV}{\lVert}
\newcommand{\rV}{\rVert}
\newcommand{\mD}{\mathcal{D}}
\newcommand{\mE}{\mathcal{E}}
\newcommand{\mF}{\mathcal{F}}
\newcommand{\mH}{\mathcal{H}}
\newcommand{\mV}{\mathcal{V}}
\newcommand{\bN}{\mathbb{N}}
\newcommand{\bR}{\mathbb{R}}
\newcommand{\sC}{\mathscr{C}}
\newcommand{\sT}{\mathscr{T}}
\newcommand{\sU}{\mathscr{U}}
\newcommand{\sV}{\mathscr{V}}
\newcommand{\sW}{\mathscr{W}}
\newcommand{\suchthat}{\mathrel{}\middle|\mathrel{}}
\def\sideremark#1{\ifvmode\leavevmode\fi\vadjust{\vbox to0pt{\vss
 \hbox to 0pt{\hskip\hsize\hskip1em
 \vbox{\hsize3cm\tiny\raggedright\pretolerance10000
 \noindent #1\hfill}\hss}\vbox to8pt{\vfil}\vss}}}
\newcommand{\comment}[1]{}
\newtheorem{thm}{Theorem}[section]
\newtheorem{prop}[thm]{Proposition}
\newtheorem{lem}[thm]{Lemma}
\newtheorem{cor}[thm]{Corollary}
\theoremstyle{definition}
\newtheorem{defn}[thm]{Definition}
\theoremstyle{remark}
\newtheorem{remark}[thm]{Remark}
\numberwithin{equation}{section}
\begin{document}

\title[Bifurcation for a fully nonlinear boundary Yamabe-type problem]{Nonuniqueness for a fully nonlinear boundary Yamabe-type problem via bifurcation theory}
\author{Jeffrey S. Case}
\thanks{JSC was supported by a grant from the Simons Foundation (Grant No.\ 524601)}
\address{109 McAllister Building \\ Penn State University \\ University Park, PA 16802}
\email{jscase@psu.edu}
\author{Ana Claudia Moreira}
\address{Universidade de S\~ao Paulo \\ Departamento de Matem\'atica \\ Rua do Mat\~ao, 1010 \\ S\~ao Paulo, SP, 05508-090, Brazil}
\email{anaclaudia77br@gmail.com}
\email{amoreira@ime.usp.br}
\author{Yi Wang}
\thanks{YW was partially supported by NSF Grant No.\ DMS-1612015}
\address{3400 N. Charles St. \\ 404 Krieger Hall \\ Mathematics Department \\ Baltimore, MD 21218}
\email{ywang@math.jhu.edu}
\keywords{$\sigma_k$-curvature; generalized mean curvature; boundary Yamabe problem; bifurcation theory; fully nonlinear PDE}
\subjclass[2010]{Primary 58J32; Secondary 53A30, 58J40}
\begin{abstract}
 One way to generalize the boundary Yamabe problem posed by Escobar is to ask if a given metric on a compact manifold with boundary can be conformally deformed to have vanishing $\sigma_k$-curvature in the interior and constant $H_k$-curvature on the boundary.  When restricting to the closure of the positive $k$-cone, this is a fully nonlinear degenerate elliptic boundary value problem with fully nonlinear Robin-type boundary condition.  We prove a general bifurcation theorem which allows us to construct examples of compact Riemannian manifolds $(X,g)$ for which this problem admits multiple non-homothetic solutions in the case when $2k<\dim X$.  Our examples are all such that the boundary with its induced metric is a Riemannian product of a round sphere with an Einstein manifold.
\end{abstract}
\maketitle

\section{Introduction}
\label{sec:intro}

An important question for geometric variational problems is whether their solutions are unique.  Even in the case of the Yamabe Problem, where one wants to find a Riemannian metric of constant scalar curvature and unit volume in a given conformal class, this question has a rich and deep history.  For conformal classes which do not admit a metric of positive scalar curvature, there is always a unique solution, while the action of the M\"obius group gives a noncompact set of solutions on the round spheres~\cite{Schoen_Yau_book}.  In dimension at least three, there are a variety of ways to construct nonspherical conformal classes with multiple solutions, such as Schoen's construction~\cite{Schoen1989} on $S^1\times S^{n-1}$ using ODE methods, later generalized to other products by Petean~\cite{Petean2010}, or by bifurcation theory as pioneered by de Lima, Piccione and Zedda~\cite{deLimaPiccioneZedda2012} and later applied to a variety of examples (e.g.\ \cite{BettiolPiccione2013,BettiolPiccioneSantoro2016}).  By constrast, an easy maximum principle argument shows that on closed surfaces not conformal to the sphere, there is a unique solution up to homothety (cf.\ \cite{GurskyStreets2015}).  More generally, Khuri, Marques and Schoen~\cite{KhuriMarquesSchoen2009} proved that on manifolds of dimension at most $24$ on which the Positive Mass Theorem holds, the set of solutions is compact.  By constrast, Brendle~\cite{Brendle2008} and Brendle and Marques~\cite{BrendleMarques2009} gave examples of nonspherical metrics in dimensions at least $25$ for which the set of solutions is noncompact.

Analogues of these results have been considered for many other geometric variational problems; here we mention only a few such results.  On closed manifolds, a natural generalization of the Yamabe Problem is to find a Riemannian metric of constant $\sigma_k$-curvature and unit volume in a given conformal class~\cite{Viaclovsky2000}.  Here the $\sigma_k$-curvature is the $k$-th elementary symmetric polynomial of the eigenvalues of the Schouten tensor $P$; its importance stems from the decomposition of the Riemann curvature tensor
\[ \Rm = W + P \wedge g \]
into the totally trace-free Weyl tensor $W$ and the Kulkarni--Nomizu product $P\wedge g$ of the Schouten tensor and the Riemannian metric.  Since the Weyl tensor is conformally invariant, the Schouten tensor completely controls the Riemann curvature tensor within a conformal class; note also that $\sigma_1$ is proportional to the scalar curvature.  By adapting Schoen's ODE method~\cite{Schoen1989} for the case $k=1$, Viaclovsky~\cite{Viaclovsky2000} showed that if $n>2k$, then $S^1\times S^{n-1}$ admits multiple nonhomothetic solutions to the constant $\sigma_k$-curvature problem provided the $S^1$-factor has sufficiently large radius.  Recent work of Gursky and Streets~\cite{GurskyStreets2016b,GurskyStreets2015,GurskyStreets2016} shows that the condition $n>2k$ is required; when $n=2k$, $k\leq2$, the solutions are only nonunique in the conformal class of the round sphere~\cite{GurskyStreets2015,GurskyStreets2016}, and when $n=2k$, $k\geq3$, the only locally conformally flat manifold for which solutions are nonunique is the sphere~\cite{GurskyStreets2016b}.

On compact manifolds with boundary, one could instead study Escobar's problem~\cite{Escobar1992a,Escobar1992} of conformally deforming a Riemannian metric so that the interior has constant scalar curvature and the boundary has constant mean curvature; one typically asks that one of these constants is zero. More precisely, Escobar first proved that for manifolds with boundary, $(J_g; H_g)$ is variational, where $J_g:=R_g/2n$; variational follows from the fact that the functional
\[ G_1(g) := \frac{1}{n-1}\left(\int_X J_g \dvol_g + \oint_M H_g \dvol_{\iota^\ast g}\right) \]
(note our convention is $H_g=\frac{1}{n}tr A$) for $g\in [g_0]$ is such that 
\[ \left.\frac{d}{dt}\right|_{t=0}G_1\left(e^{2t\Upsilon}g\right) = \int_X R_g\Upsilon\,\dvol_g + \oint_M H_g\Upsilon\,\dvol_{\iota^\ast g} . \]
Thus depending on the different volume constraints, there are two types of boundary Yamabe problem. If one considers, 
\[ \inf_{g\in [g_0]} \frac{\displaystyle\int_X J_g \dvol_g + \oint_M H_g \dvol_{\iota^\ast g}}{\Vol_g (M)^{\frac{n-1}{n}}}, \]
then the Euler-Lagrangian equation (with suitable normalization) is given by
\[
 \begin{cases}
  R_g = 0, & \text{in $X$}, \\
  H_g = \mbox{const}, & \text{on $M$}.
 \end{cases}
\]
This boundary Yamabe problem is called the scalar flat type. This problem, as remarked by Escobar in \cite{Escobar1992a}, is a higher dimensional generalization of the Riemann mapping theorem. It was studied by Escobar~\cite{Escobar1992a,Escobar1996} and Marques~\cite{Marques2005,Marques2007}.  If one considers instead
\[ \inf_{g\in [g_0]} \frac{\displaystyle\int_X J_g \dvol_g + \oint_M H_g \dvol_{\iota^\ast g}}{\Vol_g (X)^{\frac{n-1}{n+1}} }, \]
then the Euler-Lagrangian equation is given by
\[
 \begin{cases}
  R_g = \mbox{const}, & \text{in $X$}, \\
  H_g = 0, & \text{on $M$}.
 \end{cases}
\]
This boundary Yamabe problem is called the minimal boundary type. It was studied by Escobar \cite{Escobar1992}, and recently by Brendle and Chen \cite{BrendleChen2014}.
Moreover, nonuniqueness when the interior dimension is at least three has recently been established using bifurcation theory both 
in the minimal boundary \cite{DiazMoreira2018,Moreira2018} and the scalar flat \cite{Diaz2018} cases.
In the latter case, a direct study of the underlying PDE has been carried out by de la Torre, del Pino, Gonz\'alez and Wei~\cite{DelaTorreDelPinoGonzalezWei2017}.  Here one faces the additional difficulty that one cannot use symmetry to reduce the problem to a local ODE; instead, one either reduces to a nonlocal ODE on the boundary or to a PDE of two variables in the interior.  This interplay between interior operators and nonlocal operators is important to the work of de la Torre \emph{et.\ al.}~\cite{DelaTorreDelPinoGonzalezWei2017}, and in fact allows them to study nonuniqueness for the general fractional Yamabe problem~\cite{GonzalezQing2010}.  Bifurcation methods have also recently been used by Bettiol, Piccione and Sire~\cite{BettiolPiccioneSire2018} to prove nonuniqueness for metrics with constant fourth-order $Q$-curvature.

The goal of this paper is to establish similar nonuniquess results for the $\sigma_k$-curvature on manifolds with boundary.  We begin by describing the appropriate boundary geometry.  In general, the $\sigma_k$-curvature problem is only variational when $k\leq2$ or the underlying conformal class is locally conformally flat~\cite{BransonGover2008,Viaclovsky2000}; this is a key reason for the dichotomy between the cases $k\leq2$ and $k\geq3$ in the aforementioned results of Gursky and Streets~\cite{GurskyStreets2016b,GurskyStreets2015,GurskyStreets2016}.  S.\ Chen~\cite{Chen2009s} showed that there are local scalar invariants $H_k$ on the boundary, constructed as polynomials in the restriction of the interior Schouten tensor and the second fundamental form of the boundary, for which the pair $(\sigma_k;H_k)$ is variational; i.e.\ such that there is a functional $G_k\colon [g_0]\to\bR$, for $[g_0]$ a conformal class on a compact manifold $X$ with boundary $M=\partial X$, such that
\begin{equation}
 \label{eqn:variational}
 \left.\frac{d}{dt}\right|_{t=0}G_k\left(e^{2t\Upsilon}g\right) = \int_X \sigma_k^g\Upsilon\,\dvol_g + \oint_M H_k^g\Upsilon\,\dvol_{\iota^\ast g}
\end{equation}
for all $g\in [g_0]$ and all $\Upsilon\in C^\infty(X)$ (see also~\cite{CaseWang2016s}).  When $\dim X\not=2k$, one can take
\begin{equation}
 \label{eqn:mFk_noncritical_dimension}
 G_k(g) = \frac{1}{n+1-2k}\left[ \int_X \sigma_k^g\,\dvol_g + \oint_M H_k^g\,\dvol_{\iota^\ast g} \right] ,
\end{equation}
where $n=\dim M$.
Note that when $k=1$, $H_k^g=H_g$, the mean curvature of boundary, and $G_k$ is the functional considered by Escobar in the boundary Yamabe problem.

We are interested in studying the $\sigma_k$-flat type of boundary Yamabe problem, generalizing Escobar's work when $k=1$. The set of solutions are the critical points of 
\begin{equation}
\inf_{g\in [g_0], g\in \overline{\Gamma_k^+}} \frac{\displaystyle\int_X \sigma_k^g \dvol_g + \oint_M H_k^g \dvol_{\iota^\ast g}}{\Vol_g (M)^{\frac{n+1-2k}{n}}},
\end{equation}
They satisfy
\begin{equation}
 \label{eqn:constant_Hk_problem}
 \begin{cases}
  \sigma_k^g = 0, & \text{in $X$}, \\
  H_k^g = \mbox{const}, & \text{on $M$}
 \end{cases}
\end{equation}
in a given conformal class $[g_0]$ on $X$, at least in the cases when the $\sigma_k$-curvature is variational.  
More precisely, let
\[ \Gamma_k^+ = \left\{ g\in [g_0] \suchthat \sigma_1^g>0, \dotsc, \sigma_k^g>0 \right\} \]
be the positive elliptic $k$-cone, so named because if $g=e^{2\Upsilon}g_0\in\Gamma_k^+$, then the fully nonlinear PDE $\sigma_k^g=f$ is an elliptic equation in $\Upsilon$ when written with respect to the background metric $g_0$.  We study the space of solutions $g\in\overline{\Gamma_k^+}$ to~\eqref{eqn:constant_Hk_problem}, where the closure is taken in $C^{1,1}(X)$.  Written in terms of a fixed background metric, this problem becomes a fully nonlinear degenerate elliptic PDE with fully nonlinear Robin-type boundary condition. 

Similar to the special case $k=1$ treated by de la Torre \textit{et. al.}~\cite{DelaTorreDelPinoGonzalezWei2017}, attempting to prove nonuniqueness for the boundary value problem~\eqref{eqn:constant_Hk_problem} using Schoen's symmetry argument runs into the problem that one can only reduce the problem to a local PDE of two variables.  Moreover, since $\sigma_k$ is fully nonlinear, it is unclear if one could reduce the problem to a nonlocal ODE on the boundary.  Thus we attack the problem using bifurcation methods.  To that end, it will be helpful to recast~\eqref{eqn:constant_Hk_problem} as a nonlocal problem on the boundary $M$. Suppose $h$ is a Riemannian metric on $M$ for which there exists a metric $g\in\Gamma_k^+$ with $g\rv_{TM}=h$.  Guan~\cite{Guan2007} showed that there is a solution $\cg\in\overline{\Gamma_k^+}$ of the Dirichlet problem
\begin{equation}
 \label{eqn:metric_extension}
 \begin{cases}
  \sigma_k^{\cg} = 0, & \text{in $X$}, \\
  \cg\rv_{TM} = h, & \text{on $M$}.
 \end{cases}
\end{equation}
Indeed, when $2k<\dim X$, the first- and third-named authors~\cite{CaseWang2016s} showed that $\cg$ is the unique minimizer of the functional~\eqref{eqn:mFk_noncritical_dimension} among all metrics in $\overline{\Gamma_k^+}$ which induce $h$ on the boundary.  This enables us to define the \emph{nonlocal $k$-curvature $\mH_k$} of $h$ by $\mH_k^h:=H_k^{\cg}$, so that $\cg$ is a solution to~\eqref{eqn:constant_Hk_problem} if and only if $h$ is a solution to
\begin{equation}
 \label{eqn:constant_mHk_problem}
 \mH_k^h = \mbox{const} .
\end{equation}
Note that $\mH_k^h$ depends only on the choice of metric $h$ on $M$ and the given conformal class $[g_0]$ in the interior.

Our main result is a general bifurcation theorem which applies to families of solutions (not necessarily in the same conformal class) to the nonlocal problem~\eqref{eqn:constant_mHk_problem}.  This result gives conditions on the spectrum of the linearization of~\eqref{eqn:constant_mHk_problem} which are sufficient to guarantee the existence of conformal classes which admit nonunique solutions to~\eqref{eqn:constant_mHk_problem}.  The key condition to check is whether the number of negative eigenvalues of the linearization of~\eqref{eqn:constant_mHk_problem} ``jumps'' as one moves along the given family.  A more precise statement is as follows:

\begin{thm}
 \label{thm:bifurcation}
 Fix $k\in\bN$, $4\leq j\in\bN$, and $\alpha\in(0,1)$.  Let $X^{n+1}$ be a compact manifold with boundary $M^n:=\partial X$.  Let $\{g_s\}_{s\in[a,b]}$ be a smooth one-parameter family of $C^\infty$-metrics on $X$ such that $\sigma_k^{g_s}\equiv0$ and with respect to which $M$ has unit volume and constant $H_k$-curvature for all $s\in[a,b]$.  If $k\geq3$, assume additionally that $g_s$ is locally conformally flat for all $s\in[a,b]$.  Suppose that:
 \begin{enumerate}
  \item for every $s\in[a,b]$, the metric $g_s\in\overline{\Gamma_k^+}$ and there is a metric $\hg_s\in\Gamma_k^+$ conformal to $g_s$ and such that $g_s\rv_{TM}=\hg_s\rv_{TM}$;
  \item for every $s\in[a,b]$, either
  \begin{enumerate}
   \item $T_{k-1}^{g_s}>0$ and $S_{k-1}^{g_s}>0$, or
   \item $k=1$;
  \end{enumerate}
  \item the Jacobi operators $D\mF^{g_a}$ and $D\mF^{g_b}$ are nondegenerate; and
  \item $\Ind\bigl(D\mF^{g_a}\bigr)\not=\Ind\bigl(D\mF^{g_b}\bigr)$.
 \end{enumerate}
 Then there exists a point $s_\ast\in(a,b)$ and a sequence $(s_\ell)_\ell\subset[a,b]$ such that $s_\ell\to s_\ast$ as $\ell\to\infty$ and for each $\ell$, there are nonisometric unit volume $C^{j,\alpha}$-metrics in $[g_{s_\ell}\rv_{TM}]$ with constant $\mH_k$-curvature.
\end{thm}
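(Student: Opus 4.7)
The plan is to recast the problem as a bifurcation problem for a smooth map whose trivial branch is $\{g_s\rv_{TM}\}_{s\in[a,b]}$ and then apply a Morse-index-jump bifurcation theorem in the style of Smoller--Wasserman or Kielh\"ofer. Since the reduction $h\mapsto\cg$ already replaces~\eqref{eqn:constant_Hk_problem} by the boundary equation~\eqref{eqn:constant_mHk_problem}, it suffices to produce bifurcating solutions of $\mH_k^h=\mbox{const}$ within the conformal classes $[g_s\rv_{TM}]$.

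First I would parametrize metrics in $[g_s\rv_{TM}]$ by $h_{u,s}:=e^{2u}g_s\rv_{TM}$, with $u$ ranging over a suitable slice in $C^{j,\alpha}(M)$ (e.g.\ unit-volume metrics, quotiented by the connected conformal isometry group at a base point in order to rule out trivial orbits). Define
\[ \Phi(u,s):=\mH_k^{h_{u,s}}-\overline{\mH_k^{h_{u,s}}}, \]
with the overline denoting the mean value on $(M,h_{u,s})$. The hypothesis that each $g_s$ has constant $H_k$-curvature gives $\Phi(0,s)\equiv 0$, so $\{(0,s):s\in[a,b]\}$ is a trivial branch. The restriction $j\geq 4$ and the admissibility hypothesis~$(2)$, combined with boundary regularity for the Dirichlet problem~\eqref{eqn:metric_extension}, make $\Phi$ a $C^1$ map into a suitable $C^{j-r,\alpha}$ space of mean-zero functions, where $r$ is the order of $\mH_k$, and identify the linearization $D_u\Phi(0,s)$ up to projection with the Jacobi operator $D\mF^{g_s}$. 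The variational identity~\eqref{eqn:variational} forces this linearization to be formally self-adjoint, while condition~$(2)$ gives ellipticity; hence $D_u\Phi(0,s)$ is a self-adjoint Fredholm pseudodifferential operator on the closed manifold $M$, with real discrete spectrum bounded below and well-defined Morse index $\Ind(D\mF^{g_s})$.

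With this framework in place, the conclusion follows from the variational bifurcation-from-a-trivial-branch theorem. By hypothesis~$(3)$ the Jacobi operators at the endpoints are nondegenerate, so by continuity of the discrete spectrum the index is locally constant near $s=a$ and $s=b$; by hypothesis~$(4)$ the two constants differ, so at least one eigenvalue must cross zero at some $s_\ast\in(a,b)$. A standard Lyapunov--Schmidt reduction together with a Morse-theoretic or equivariant-degree argument (as applied in the boundary Yamabe setting in~\cite{DiazMoreira2018,Diaz2018}) then produces a sequence of nontrivial solutions $(u_\ell,s_\ell)$ of $\Phi(u_\ell,s_\ell)=0$ with $u_\ell\not\equiv 0$ and $s_\ell\to s_\ast$. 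The slice construction ensures that the resulting unit-volume metrics $e^{2u_\ell}g_{s_\ell}\rv_{TM}$ are not isometric to $g_{s_\ell}\rv_{TM}$.

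The main obstacle I anticipate is establishing the $C^1$ smoothness of $\Phi$ and identifying the Fredholm structure of its linearization, since $\mH_k$ is defined only implicitly through the degenerate fully nonlinear Dirichlet problem~\eqref{eqn:metric_extension}. Showing that $D_u\Phi(0,s)$ is an elliptic self-adjoint operator on $M$ of the expected order requires a Hadamard-type variational computation for the extension $\cg$ (via~\eqref{eqn:variational} restricted to the fibres of the map $g\mapsto g\rv_{TM}$), combined with optimal boundary regularity for the $k$-Hessian equation near $\partial X$. The admissibility condition in~$(2)$ is precisely what is needed to control the degeneration of $\sigma_k$ along $\partial\overline{\Gamma_k^+}$ and thereby obtain ellipticity of the linearized Robin-type boundary problem.
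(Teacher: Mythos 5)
Your proposal follows essentially the same route as the paper: recast~\eqref{eqn:constant_Hk_problem} as the boundary problem~\eqref{eqn:constant_mHk_problem}, set up a map $\Phi$ whose trivial branch is $\{g_s\rv_{TM}\}$, establish that the linearization $D\mF^{g_s}$ is elliptic, self-adjoint, and Fredholm with well-defined finite Morse index (the paper does this via the Banach manifold $\sV_k^{j,\alpha}$, Agmon--Douglis--Nirenberg estimates, and Proposition~\ref{prop:fsae}/Lemma~\ref{lem:finite_index}), and then invoke an index-jump bifurcation theorem (the paper uses the Fiberwise Bifurcation Theorem of de Lima--Piccione--Zedda, which is the Smoller--Wasserman style result you reference). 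The technical obstacle you flag — $C^1$ smoothness of the extension map and Fredholmness of the nonlocal Robin-type linearization — is precisely the content of Sections~\ref{sec:extension}--\ref{sec:fredholm}.
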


In the statement of Theorem~\ref{thm:bifurcation}, $T_{k-1}^{g_s}$ is the $(k-1)$-th Newton tensor of the Schouten tensor of $g_s$, the tensor $S_{k-1}^{g_s}$ is a section of $S^2T^\ast M$ defined similarly in relation to $H_k$ (see ~\eqref{eqn:Hk_defn} and \eqref{eqn:Sk_defn}), and the Jacobi operator $D\mF$ is closely related to the second variation of the functional~\eqref{eqn:mFk_noncritical_dimension} (see Corollary~\ref{cor:boundary_dF}).  We emphasize that metrics $g_s$ need not be conformal to each other and the conclusion of Theorem~\ref{thm:bifurcation} is that there is a bifurcation instant $s_\ast$ for the family $\{g_s\}$.  See Section~\ref{sec:bg} for further details.

Theorem~\ref{thm:bifurcation} imposes no assumption on the sign of the constant $H_k$-curvature, though we shall only apply it with positive $H_k$-curvature.  We do not know whether there are families which satisfy the hypotheses of Theorem~\ref{thm:bifurcation} with nonpositive $H_k$-curvature.  We have normalized the volume of the boundary in Theorem~\ref{thm:bifurcation}, rather than the $H_k$-curvature as in~\eqref{eqn:constant_Hk_problem}, for convenience.  Of course, a solution of~\eqref{eqn:constant_Hk_problem} can always be rescaled to a metric of constant $H_k$-curvature with respect to which $M$ has unit volume.


Our application of bifurcation theory is substantially different from previous applications to Yamabe-type problems due to complications related the the degenerate fully nonlinear elliptic PDE $\sigma_k^g=0$.  On the one hand, it is not even clear that solutions to $\sigma_k^g=0$ with Dirichlet boundary conditions always exist; Guan's~\cite{Guan2007} result requires the existence of a subsolution.  On the other hand, one typically expects at best $C^{1,1}$-regularity of solutions. We get around these issues in Theorem~\ref{thm:bifurcation} by explicitly assuming the existence of smooth solutions which have $T_{k-1}>0$ and also have subsolutions and then applying Agmon, Douglis and Nirenberg~\cite{AgmonDouglisNirenberg1959}.  Note that both of these conditions are open conditions, and the former implies that the equation $\sigma_k=0$ is in fact elliptic in the interior.  The additional assumption $S_{k-1}>0$ is also open, and implies that the boundary value problem~\eqref{eqn:constant_Hk_problem} is elliptic in the sense of Agmon, Douglis and Nirenberg~\cite{AgmonDouglisNirenberg1959}. These tools allow us to appeal to the Fiberwise Bifurcation Theorem of de Lima, Piccione and Zedda~\cite{deLimaPiccioneZedda2012}, which in particular allows us to solve the fully nonlinear problem~\eqref{eqn:constant_Hk_problem} through careful study of its linearization.

In Section~\ref{sec:example} we construct three general families of Riemannian manifolds which satisfy the hypotheses of Theorem~\ref{thm:bifurcation}.  In particular, these examples show that if $\dim X>2k$, there are infinitely many conformal classes for which solutions of~\eqref{eqn:constant_mHk_problem} are not unique.  Our examples all have the property that the boundary is a Riemannian product a round sphere and an Einstein manifold (cf.\ \cite{deLimaPiccioneZedda2012,Diaz2018,Petean2010}).  For ease of reading, we summarize these examples from the perspective of the boundary in three results corresponding to when the sign of the Ricci curvature of the second factor is negative, positive, or zero, respectively.

\begin{thm}
 \label{thm:nonuniqueness_neg}
 Fix $k\in\{2,3\}$, let $(S^n,d\theta^2)$ denote the round sphere with a metric of constant sectional curvature $1$, and let $(H^m,g_H)$ denote a compact hyperbolic manifold with constant sectional curvature $-1$.  Suppose that one of the following statements holds:
 \begin{enumerate}
  \item $k=2$, and $n=\frac{\ell(\ell+3)}{2}$, $m=\frac{\ell(\ell+1)}{2}$ for some $\ell\geq2$;
  \item $k=3$, and $n=\frac{\ell(3\ell+5)}{2}$, $m=\frac{\ell(3\ell-1)}{2}$ for some $\ell\geq2$;
  \item $k=3$, and $n=\frac{(\ell+2)(3\ell+1)}{2}$, $m=\frac{\ell(3\ell+1)}{2}$ for some $\ell\geq1$.
 \end{enumerate}
 Then there are infinitely many $s\in\bR_+$ such that, up to rescaling, the product $(S^n\times H^m,d\theta^2\oplus s^2\,g_H)$ is a solution to the nonlocal problem~\eqref{eqn:constant_mHk_problem}, but it is not the unique solution.
\end{thm}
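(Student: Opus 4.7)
The plan is to apply Theorem~\ref{thm:bifurcation} to appropriate one-parameter families of metrics whose boundary restrictions are $d\theta^2 + s^2 g_H$.

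First, for each case I would construct a compact filling $X$ with $\partial X = S^n \times H^m$ and a smooth family $\{g_s\}_{s>0}$ of metrics on $X$ such that, up to rescaling, $g_s\rv_{TM} = d\theta^2 + s^2 g_H$, $\sigma_k^{g_s} \equiv 0$ in the interior, $H_k^{g_s}$ is a positive constant on $M$, and (when $k = 3$) $g_s$ is locally conformally flat. A natural ansatz is the doubly warped product
\[ X = [0, r_0] \times S^n \times H^m, \qquad g_s = dr^2 + \phi(r)^2\, d\theta^2 + \psi(r)^2 s^2\, g_H, \]
smoothly capped at $r = 0$ by collapsing the sphere factor. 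The interior equation, the boundary condition, and the conformal flatness constraint when $k = 3$ then reduce to a system of ODEs in $\phi$ and $\psi$; the openness conditions $g_s \in \overline{\Gamma_k^+}$, $T_{k-1}^{g_s} > 0$, $S_{k-1}^{g_s} > 0$, and the existence of a conformal subsolution $\hg_s \in \Gamma_k^+$ should be verifiable from explicit formulas for the Schouten tensor and second fundamental form of such a warped product.

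Second, I would decompose the Jacobi operator $D\mF^{g_s}$. Because $g_s$ is invariant under $\SO(n+1) \times \Iso(H^m, g_H)$, so too is the nonlocal operator $\mH_k$ and its linearization $D\mF^{g_s}$. Splitting $C^\infty(M)$ into joint eigenspaces of $\Delta_{S^n}$ (eigenvalues $\lambda_\ell = \ell(\ell + n - 1)$ for $\ell \in \bN$) and $\Delta_{(H^m, g_H)}$ (eigenvalues $\mu \geq 0$ in the discrete spectrum of the compact hyperbolic quotient) reduces $D\mF^{g_s}$ to multiplication by a scalar symbol $\Lambda_k(\ell, \mu; s)$ on each finite-dimensional block. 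Although $\mH_k$ is defined only implicitly through the Dirichlet problem $\sigma_k = 0$, $\SO(n+1) \times \Iso(H^m, g_H)$-equivariance forces the symbol to be obtained by solving a single scalar linear boundary value problem on the filling for each $(\ell, \mu)$.

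Third, I would exhibit infinitely many values of $s$ at which the Morse index of $D\mF^{g_s}$ jumps. Restricted to the $\mu = 0$ sector, $\Lambda_k(\ell, 0; s) = 0$ reduces to an algebraic relation among $\ell$, $n$, $m$, and $s$. The dimension identities in the theorem, e.g.\ $n = \ell(\ell+3)/2$ and $m = \ell(\ell+1)/2$ in case (1), are chosen exactly so that this relation has a transversal zero $s_\ast(\ell) > 0$ for every sufficiently large $\ell$, with the sequence $\{s_\ast(\ell)\}$ accumulating only at $0$ or $\infty$. Near each $s_\ast(\ell)$, one eigenvalue of $D\mF^{g_s}$ crosses zero, so one can choose $a < s_\ast(\ell) < b$ at which the Jacobi operators are nondegenerate with $\Ind\bigl(D\mF^{g_a}\bigr) \neq \Ind\bigl(D\mF^{g_b}\bigr)$. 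Theorem~\ref{thm:bifurcation} then produces a bifurcation instant, and hence values of $s$ arbitrarily close to $s_\ast(\ell)$ at which $[d\theta^2 + s^2 g_H]$ admits a nonisometric second solution of $\mH_k = \mbox{const}$; letting $\ell$ vary gives infinitely many such $s$.

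The main obstacle will be the explicit identification of $\Lambda_k(\ell, 0; s)$ and the verification that its zeros line up with the three dimension families in the statement. Because $\mH_k$ is implicit, computing its symbol requires the linearization of the $\sigma_k = 0$ Dirichlet problem on the warped product filling together with the linearization of $H_k$ on the boundary; the resulting algebraic condition in $(n, m, \ell, s)$ is what distinguishes the three cases, each corresponding to a different branch of zeros of $\Lambda_k$.
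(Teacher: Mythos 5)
Your proposal misidentifies the two central mechanisms in the paper's proof, and as written it would not go through.

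First, the dimensional constraints on $(n,m,\ell)$ are \emph{not} chosen so that a linearized symbol vanishes at certain $s$. They are chosen so that the \emph{interior equation} $\sigma_k^g\equiv0$ holds identically. The filling the paper uses is the simple Riemannian product $X_\varepsilon = S_\varepsilon^{n+1}\times H^m$ of a spherical cap of radius $\varepsilon$ (with constant sectional curvature $+1$) and the hyperbolic factor, with metric $g = dr^2\oplus\sin^2 r\,d\vartheta^2\oplus g_H$. No doubly warped ODE system needs to be solved. For a product of Einstein manifolds normalized as $\Ric_{g_M}=(n-1)g_M$, $\Ric_{g_H}=-(m-1)g_H$ the Schouten tensor is $P=\tfrac12(g_M\oplus(-g_H))$, and Lemma~\ref{lem:einstein_products} shows that the specific dimension pairs $(n,m)$ in cases (1)--(3) force $\sigma_k\equiv0$ and $T_{k-1}>0$. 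The normalization to curvatures $\pm1$ also makes the product locally conformally flat, which is what licenses the case $k=3$. Your proposed ansatz with unknown $\phi(r),\psi(r)$ solves for structure that the paper already gets for free, and nowhere in your proposal is there an argument that $\sigma_k\equiv0$ holds for your filling, which is a nontrivial requirement.

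Second, the bifurcation is driven by the $H^m$-factor, not the $S^n$-factor. In Theorem~\ref{thm:lcf} the paper restricts $D\mF^g$ to pullbacks $\pi^\ast\phi$ of functions on $H^m$ and shows
\[
 D\mF^g(\pi^\ast\phi) = \pi^\ast\Bigl[-\delta_{g_H}\bigl((\iota^\ast S_{k-1})(\onabla\phi)\bigr) - (2k-1)(\iota^\ast H_k)\phi\Bigr] + O(1),
\]
with $\iota^\ast S_{k-1}\sim\varepsilon^{3-2k}g_H$ and $H_k\sim\varepsilon^{1-2k}$ as $\varepsilon\to0$. Since $3-2k<1-2k<0$ for $k\geq2$, the zeroth-order coefficient dominates and more and more $\Delta_{g_H}$-eigenvalues drop below the threshold, so $\Ind(D\mF^{g_\varepsilon})\to\infty$. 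This already yields infinitely many index jumps (hence infinitely many bifurcation instants via Corollary~\ref{cor:bifurcation}) \emph{with $\ell$ and hence $(n,m)$ held fixed}. Your proposal instead lets the sphere eigenvalue index (which you also call $\ell$, clashing with the fixed dimensional parameter $\ell$) vary and claims infinitely many bifurcation values arise this way. That is a different and incorrect reading of the statement: the theorem asserts infinitely many $s$ for a single fixed $(n,m)$, and the mechanism is the asymptotic blowup of $H_k$ and $S_{k-1}$ as $\varepsilon\to0$, not zeros of a symbol indexed by spherical harmonics. Without the explicit product structure and the asymptotic computations of Lemma~\ref{lem:lcf_computations}, your outline has no way to verify the index jump, the $k$-admissibility (which the paper checks by perturbing with $u=(1+sr^2)/(1+s\varepsilon^2)$), or the positivity of $H_k$ and $S_{k-1}$.
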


\begin{thm}
 \label{thm:nonuniqueness_pos}
 Fix $k\in\{2,3\}$ and let $(S^n,d\theta_n^2)$ and $(S^m,d\theta_m^2)$ denote round spheres with constant sectional curvature $1$.  Suppose that one of the following statements holds:
 \begin{enumerate}
  \item $k=2$, and $n=\frac{(\ell+1)(\ell+2)}{2}$, $m=\frac{(\ell-1)(\ell+2)}{2}$ for some $\ell\geq3$;
  \item $k=3$, and $n=\frac{(\ell+1)(3\ell+2)}{2}$, $m=\frac{(\ell-1)(3\ell+2)}{2}$ for some $\ell\geq3$;
  \item $k=3$, and $n=\frac{(\ell+1)(3\ell+4)}{2}$, $m=\frac{(\ell+1)(3\ell-2)}{2}$ for some $\ell\geq2$.
 \end{enumerate}
 Then there are infinitely many $s\in\bR_+$ such that, up to rescaling, the product $(S^n\times S^m,d\theta_n^2\oplus s^2\,d\theta_m^2)$ is a solution to the nonlocal problem~\eqref{eqn:constant_mHk_problem}, but it is not the unique solution.
\end{thm}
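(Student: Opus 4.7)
We apply Theorem~\ref{thm:bifurcation} along the canonical family of $G$-invariant metrics inducing product boundary data on $M := S^n \times S^m$, where $G := \SO(n+1) \times \SO(m+1)$.

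\textbf{Step 1 (construction of the family).} Fix a compact $(n+m+1)$-dimensional $X$ with $\partial X = M$ admitting a $G$-action restricting to the product action on $M$; for instance, take $X = S^n \times \overline{B^{m+1}}$. For each $s > 0$, the boundary metric $h_s := d\theta_n^2 \oplus s^2\,d\theta_m^2$ is admissible, and by \cite{Guan2007} together with the uniqueness result of \cite{CaseWang2016s} there is a unique $G_k$-minimizer $\cg_s \in \overline{\Gamma_k^+}$ satisfying $\cg_s\rv_{TM} = h_s$ and $\sigma_k^{\cg_s} \equiv 0$ on $X$. Uniqueness plus $G$-invariance of $h_s$ force $\cg_s$ to be $G$-invariant, hence $H_k^{\cg_s}$ is automatically constant on $M$. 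Rescaling to normalize the boundary volume yields a smooth family $\{g_s\}_{s \in \bR_+}$ fulfilling hypotheses (1) and (2) of Theorem~\ref{thm:bifurcation}; the positivity conditions $T_{k-1}^{g_s} > 0$ and $S_{k-1}^{g_s} > 0$ are verified by direct computation on products of positively curved Einstein factors.

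\textbf{Step 2 (spectrum of the linearization).} Since $g_s$ is $G$-invariant, so is $D\mF^{g_s}$; this lets us decompose $C^\infty(M)$ across the joint eigenspaces $V_{j,\ell}$ of $\Delta_{S^n}$ and $\Delta_{S^m}$ (with eigenvalues $j(j+n-1)$ and $\ell(\ell+m-1)$ respectively). The restriction of $D\mF^{g_s}$ to each $V_{j,\ell}$ reduces to multiplication by a scalar $\mu_{j,\ell}(s)$, computed explicitly by linearizing the extension map $h \mapsto \cg \mapsto H_k^{\cg}$ and tracking the ODE governing the radial profile of the extension. The resulting $\mu_{j,\ell}(s)$ is a rational function of $s$ whose numerator is polynomial in $j(j+n-1)$, $\ell(\ell+m-1)$, and the structural constants attached to $\sigma_k$ and $H_k$ on product geometries.

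\textbf{Step 3 (infinitely many Morse index jumps).} The dimension identities in the three cases are chosen so that, for each sufficiently large integer $\ell$, there is a value $s_\ell^\ast \in \bR_+$ at which $\mu_{j,\ell}$ vanishes transversally for an appropriate $j = j(\ell)$, and at which no other $\mu_{j',\ell'}$ vanishes. Selecting a small closed interval $[a,b]$ isolating $s_\ell^\ast$, hypotheses (3)--(4) of Theorem~\ref{thm:bifurcation} hold: the endpoints are nondegenerate and the Morse index of $D\mF^{g_s}$ changes between $s=a$ and $s=b$. Theorem~\ref{thm:bifurcation} then produces a bifurcation instant $s_* \in (a,b)$ admitting a nonisometric solution to \eqref{eqn:constant_mHk_problem} in the conformal class $[h_{s_*}]$. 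Letting $\ell$ range over infinitely many integers yields the infinitely many values of $s$ claimed in the statement.

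\textbf{Main obstacle.} The central technical task is the explicit computation of the spectral scalars $\mu_{j,\ell}(s)$ in Step 2: one must understand the linearized Dirichlet-to-Neumann map $h \mapsto H_k^{\cg}$ on product geometries and separate the spherical-harmonic data on the two factors from the radial ODE governing the extension. Once $\mu_{j,\ell}(s)$ is in hand, Step 3 is a concrete arithmetic verification using $\lambda_j(S^n) = j(j+n-1)$ that the dimension identities in each of the three cases produce infinitely many transverse zeros along distinct fibers $(j,\ell)$.
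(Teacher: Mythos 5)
Your plan follows the broad outline of the paper (bifurcation theory applied to a one\mbox{-}parameter family of product geometries), but there are two genuine gaps that the paper goes to some lengths to avoid.

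\textbf{The interior metric must be explicit and smooth.} You construct the family by applying the existence result of Guan~\cite{Guan2007} (plus Case--Wang uniqueness) to the boundary data $h_s$ on $X=S^n\times\overline{B^{m+1}}$. But that produces only a $C^{1,1}$ extension $\cg_s\in\overline{\Gamma_k^+}$, whereas Theorem~\ref{thm:bifurcation} requires a smooth family of metrics on $X$ with $\sigma_k^{g_s}\equiv 0$ and $T_{k-1}^{g_s}>0$; $T_{k-1}$ can degenerate on the boundary of the cone $\overline{\Gamma_k^+}$, and the whole regularity/ellipticity machinery of Sections~\ref{sec:extension}--\ref{sec:fredholm} rests on $T_{k-1}>0$ and $S_{k-1}>0$ strictly. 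The paper calls this out explicitly in Section~\ref{sec:example}: ``the requirement that the interior metric be smooth and satisfy the degenerate elliptic equation $\sigma_k\equiv0$ precludes us from constructing the interior metric by general existence results.'' You also never specify the interior conformal class $[g_0]$ on $X$, which $\mH_k$ depends on. The paper's fix is to construct the interior metric by hand: $X_\varepsilon = S^n\times H^{m+1}_\varepsilon$, the Riemannian product of the round $n$-sphere with a geodesic ball of radius $\varepsilon$ in hyperbolic space, with dimensions tuned so that Lemma~\ref{lem:einstein_products} gives $\sigma_k\equiv 0$ and $T_{k-1}>0$ everywhere. This is smooth, sits in $\overline{\Gamma_k^+}$, is locally conformally flat (so $k=3$ is allowed), and the induced boundary metric is $d\theta_n^2\oplus\sinh^2(\varepsilon)\,d\theta_m^2$, i.e.\ the desired family up to reparameterization. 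The $k$-admissibility is then checked by an explicit conformal subsolution $u=(1+sr^2)/(1+s\varepsilon^2)$ in Lemma~\ref{lem:lcf_computations2}.

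\textbf{The index jump is proved by asymptotics, not by explicit eigenvalues.} Your Step~2 proposes computing scalars $\mu_{j,\ell}(s)$ on each joint spherical-harmonic eigenspace and claims they are rational in $s$; your Step~3 then needs transversality of the zero of one $\mu_{j,\ell}$ while all others are nonzero. None of this is actually carried out, and the rationality claim is not believable: the radial ODE governing $v_\phi$ has hyperbolic-trigonometric coefficients, so the Dirichlet-to-Neumann contribution will not be rational. The paper's argument is both simpler and avoids explicit eigenvalues entirely. Lemma~\ref{lem:lcf_computations2} shows that as $\varepsilon\to0$ one has $\iota^\ast S_{k-1}= c_1\varepsilon^{3-2k}\,d\theta^2 + O(\varepsilon^{5-2k})$ and $H_k=c_2\varepsilon^{1-2k}+O(\varepsilon^{3-2k})$ with $c_1,c_2>0$, while for $\phi\in C^\infty(S^n)$ the Dirichlet-to-Neumann term $T_{k-1}(\eta,\nabla v_{\pi^\ast\phi})$ stays $O(1)$. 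Hence, restricted to pullbacks from $S^n$, $D\mF^g$ is $-\delta_{d\theta^2}\left((\iota^\ast S_{k-1})(\onabla\phi)\right)-(2k-1)H_k\phi + O(1)$, and since $H_k$ dominates $\iota^\ast S_{k-1}$ by a factor of $\varepsilon^{-2}$, the number of negative eigenvalues tends to $\infty$ as $\varepsilon\to0$. One then feeds this monotone-in-the-limit index growth into Corollary~\ref{cor:bifurcation}, which automatically handles nondegeneracy at the endpoints. Without this asymptotic mechanism (or an actual completed spectral computation), your Step~3 is an assertion, not a proof.

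In short: the topology of $X$ and the use of bifurcation theory match, but you must replace the Guan--Case--Wang extension by an explicit smooth Einstein-product interior, and replace the unverified eigenvalue computation by the asymptotic index-divergence argument.
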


\begin{thm}
 \label{thm:nonuniqueness_zero}
 Let $(S^n,d\theta^2)$ denote the round sphere with a metric of constant sectional curvature $1$ and let $(F^{m-1},g_F)$ be a compact Ricci flat manifold.  Suppose additionally that $n=\frac{(\ell+1)(\ell+2)}{2}$ and $m=\frac{(\ell-1)(\ell+2)}{2}$ for some $\ell\geq2$.  Then there are infinitely many $s_1,s_2\in\bR_+$ such that, up to rescaling, the product
 \[ (S^n\times S^1\times F^{m-1},d\theta^2\oplus s_1^2dt^2\oplus s_2^2g_F) \]
 is a solution to the nonlocal problem~\eqref{eqn:constant_mHk_problem}, but it is not the unique solution.
\end{thm}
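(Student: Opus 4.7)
The plan is to apply Theorem~\ref{thm:bifurcation} (with $k=2$) to a one-parameter subfamily of a two-parameter family of product metrics whose restriction to the boundary is $d\theta^2 \oplus s_1^2\,dt^2 \oplus s_2^2\,g_F$. Take $X = \oB^{n+1} \times S^1 \times F^{m-1}$ and equip it with a product metric $g_{s_1,s_2} = g_{\mathrm{int}} \oplus s_1^2\,dt^2 \oplus s_2^2\,g_F$, where $g_{\mathrm{int}}$ is a suitable (warped or cone-type) metric on $\oB^{n+1}$. Choose $g_{\mathrm{int}}$ so that, together with the Ricci-flat factors $S^1$ and $F$, the product satisfies $\sigma_2^{g_{s_1,s_2}} \equiv 0$ on $X$, $T_1^{g_{s_1,s_2}} > 0$ on $X$, and $S_1^{g_{s_1,s_2}} > 0$ on $M$; this is a minor modification of the constructions used for Theorems~\ref{thm:nonuniqueness_neg} and~\ref{thm:nonuniqueness_pos}. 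The product symmetry ensures $H_2^{g_{s_1,s_2}}$ is constant on $M$, and a small conformal perturbation yields the auxiliary metric $\hg_{s_1,s_2} \in \Gamma_2^+$ required by hypothesis~(1) of Theorem~\ref{thm:bifurcation}. After rescaling so that $\Vol(M)=1$, we obtain a smooth family indexed by $(s_1, s_2) \in \bR_+^2$.

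The decisive step is the spectral analysis of the Jacobi operator $D\mF^{g_{s_1,s_2}}$. The boundary metric is invariant under $\SO(n+1) \times \SO(2) \times \Iso(F, g_F)$, and $D\mF$ respects this symmetry, so its spectrum decomposes into blocks indexed by a spherical harmonic degree $\ell$ on $S^n$, a Fourier mode $j \in \bZ$ on $S^1$, and a Laplace eigenmode $\mu$ on $(F, g_F)$. In each block $D\mF$ reduces to an explicit expression in the eigenvalue data $\ell(\ell+n-1)$, $j^2/s_1^2$, and $\mu/s_2^2$, and block degeneracies trace out a codimension-one locus $\Sigma \subset \bR_+^2$. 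The dimensional relation $n = \tfrac{(\ell+1)(\ell+2)}{2}$, $m = \tfrac{(\ell-1)(\ell+2)}{2}$ is exactly the numerology of Theorem~\ref{thm:nonuniqueness_pos}(1) and is designed so that the spherical harmonic of degree $\ell$ on $S^n$ resonates with the first nontrivial Laplace mode on the complementary factor. Since the combined Laplace spectrum of $S^1 \times F$ is unbounded, $\Sigma$ has infinitely many connected components accumulating in $\bR_+^2$.

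Finally, choose any smooth path $s \mapsto (s_1(s), s_2(s))$ crossing one component of $\Sigma$ transversely between nondegenerate endpoints $s=a$ and $s=b$. Standard index counting gives $\Ind\bigl(D\mF^{g_a}\bigr) \neq \Ind\bigl(D\mF^{g_b}\bigr)$, so Theorem~\ref{thm:bifurcation} produces a bifurcation instant $s_\ast \in (a, b)$. Repeating this argument for each of the infinitely many components of $\Sigma$ yields the claimed infinitude of pairs $(s_1, s_2)$. The main obstacle is the explicit computation of $D\mF$ on this product boundary together with the verification that $\Sigma$ is nonempty, crossed transversely, and crossed with a genuine change of Morse index. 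This reduces to a spectral bookkeeping problem parallel to the one for Theorem~\ref{thm:nonuniqueness_pos}, with the simplification that Ricci flatness of $F$ removes curvature-dependent corrections from the coefficients of $D\mF$, effectively splitting the single Laplace spectrum of the $S^m$ factor in that theorem into two independent contributions from $S^1$ and $F$.
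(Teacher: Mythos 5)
The decisive gap is in how you build the interior. You take $X = \oB^{n+1} \times S^1 \times F^{m-1}$ with $g_{\mathrm{int}} \oplus s_1^2\,dt^2 \oplus s_2^2\,g_F$ and propose to ``choose $g_{\mathrm{int}}$'' so that $\sigma_2 \equiv 0$. But with a Ricci-flat second factor, $\sigma_2 \equiv 0$ does not follow from the algebraic identity of Lemma~\ref{lem:einstein_products}: that lemma requires the second factor to be \emph{negative Einstein with $\Ric = -(m-1)g$}, not Ricci flat, so that the Schouten tensor is exactly $\tfrac12(g_M \oplus (-g_H))$. If instead one takes a round ball times a flat $S^1 \times F$, the Schouten eigenvalues change and $\sigma_2$ does not vanish for the given dimensions (e.g.\ for $\ell=2$, $n=6$, $m=2$, $N=9$, direct computation gives $\sigma_2 \neq 0$). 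Producing a $g_{\mathrm{int}}$ for which the product has $\sigma_2 \equiv 0$, $T_1 > 0$, and lies in $\overline{\Gamma_2^+}$, with the right boundary data, is a genuine degenerate fully nonlinear PDE problem on the ball that your proposal assumes away as a ``minor modification.''

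The paper sidesteps this entirely by placing the boundary in a different factor. It takes the \emph{whole} round sphere $S^n$ (no boundary) times $N_R := B_R(0) \times F^{m-1}$, where $B_R(0) \subset \bR^2$ and $g_N$ is the explicit \emph{negative Einstein} warped product metric $\bR^2 \times_f F^{m-1}$ of~\cite[Example~9.118(d)]{Besse} (Lemma~\ref{lem:flat_computations}). Then $X_R = S^n \times N_R$ is a product of a positive and a negative Einstein manifold of the required dimensions, so $\sigma_2 \equiv 0$ and $T_1 > 0$ hold by pure algebra (Lemma~\ref{lem:einstein_products}), with no PDE to solve. The boundary $\partial X_R = S^n \times S^1 \times F^{m-1}$ carries a product metric with warping-dependent radii $s_1(R), s_2(R)$; varying the single parameter $R \to \infty$ sends the index to infinity and gives the bifurcation points. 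Related to this, your claim that the dimensional constraints on $n,m$ are ``designed so the degree-$\ell$ spherical harmonic resonates with the first Laplace mode on the complementary factor'' is incorrect: they come from forcing $\sigma_2(P) = 0$ for the product of Einstein metrics, and the relevant spectral analysis is carried out on $S^1$ (via a Dirichlet-to-Neumann estimate), not on $S^n$. Finally, the paper does not construct a genuine two-parameter family in $(s_1,s_2) \in \bR_+^2$ with a codimension-one bifurcation locus; the pairs $(s_1(R), s_2(R))$ trace out a single curve and the bifurcation points accumulate as $R \to \infty$.
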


As noted above, the nonlocal invariant $\mH_k$ depends on the choice of interior conformal class.  In proving each of the previous three theorems, we use the fact that for a given $k\in\bN$, there are infinitely many pairs $(m,n)\in\bN^2$ such that whenever $(M^m,g_M)$ and $(N^n,g_N)$ are Einstein manifolds with $\Ric_{g_M}=(m-1)g_M$ and $\Ric_{g_N}=-(n-1)g_N$, respectively, their Riemannian product $(M\times N,g:=g_M\oplus g_N)$ has $\sigma_k^g\equiv0$, $T_{k-1}^g>0$, and $g\in\Gamma_{k-1}^+$; see Lemma~\ref{lem:einstein_products}.  This choice of normalization also ensures that in Theorem~\ref{thm:nonuniqueness_neg} and Theorem~\ref{thm:nonuniqueness_pos}, the product metric is locally conformally flat.
This is because a Riemannian product is locally conformally flat if both factors are locally conformally flat and either (i) one of the factors is one-dimensional or (ii) the factors have constant sectional curvature of opposite sign and equal magnitude.
One can prove an analogue of Lemma~\ref{lem:einstein_products} for more general normalizations of the factors and use it to weaken the dimensional constraints for Theorem~\ref{thm:nonuniqueness_zero} and each of Theorem~\ref{thm:nonuniqueness_neg} and Theorem~\ref{thm:nonuniqueness_pos} in the case $k=2$; see Remark~\ref{rk:genl_normalization} for further discussion.

The proofs of Theorem~\ref{thm:nonuniqueness_neg}, Theorem~\ref{thm:nonuniqueness_pos}, and Theorem~\ref{thm:nonuniqueness_zero} are easily modified to include the case $k=1$.  We omit the details as a similar construction has already been given by Diaz~\cite{Diaz2018}.  We expect that these theorems can be generalized to include bifurcation results for the $\mH_k$-curvature for all $k\in\bN$.

This article is organized as follows:

In Section~\ref{sec:extension}, we describe the Banach manifolds on which we work.  This includes a new existence and stability result for solutions of the Dirichlet problem~\eqref{eqn:metric_extension}.

In Section~\ref{sec:bg} we recall some important definitions and facts about the $H_k$-curvature.  We also show that the linearizations of~\eqref{eqn:constant_Hk_problem} and~\eqref{eqn:constant_mHk_problem} are both formally self-adjoint.

In Section~\ref{sec:fredholm} we show that the linearization of the nonlocal problem~\eqref{eqn:constant_mHk_problem} is Fredholm when restricted to appropriate domains and codomains from Section~\ref{sec:extension}.  See Theorem~\ref{thm:fredholm} for a precise statement.

In Section~\ref{sec:bifurcation} we prove Theorem~\ref{thm:bifurcation}.  The key point is that the properties of the linearization of~\eqref{eqn:constant_mHk_problem} established in Section~\ref{sec:fredholm} allow us to apply the general bifurcation theorem of de Lima, Piccione and Zedda~\cite{deLimaPiccioneZedda2012}.

In Section~\ref{sec:example} we study a handful of explicit families of smooth solutions of the boundary value problem~\eqref{eqn:constant_Hk_problem}, and in the process prove Theorem~\ref{thm:nonuniqueness_neg}, Theorem~\ref{thm:nonuniqueness_pos}, and Theorem~\ref{thm:nonuniqueness_zero}.

\section{Function spaces and Dirichlet problems}
\label{sec:extension}

As discussed in the introduction, the proof of Theorem~\ref{thm:bifurcation} is simplified by restricting our attention to function spaces defined on the boundary of a compact Riemannian manifold.  To return to Theorem~\ref{thm:bifurcation} and the interior problem~\eqref{eqn:constant_Hk_problem}, we need means to extend elements of these function spaces to the interior of the manifold.  This will be done via the $\sigma_k$-curvature.  To that end, we begin by recalling the definition of the $\sigma_k$-curvature and its essential properties.

Given $k\in\bN$, the \emph{$k$-th elementary symmetric function} of a symmetric $d\times d$-matrix $B\in\Sym_d$ is
\[ \sigma_k(B) := \sum_{i_1<\dotso<i_k} \lambda_{i_1}\dotsm\lambda_{i_k}, \]
where $\lambda_1,\dotsc,\lambda_d$ are the eigenvalues of $B$.  One can compute $\sigma_k(B)$ without knowledge of the eigenvalues of $B$ via the formula
\begin{equation}
 \label{eqn:sigma_kronecker}
 \sigma_k(B) = \frac{1}{k!}\delta_{i_1\dotso i_k}^{j_1\dotso j_k}B_{j_1}^{i_1}\dotsm B_{j_k}^{i_k},
\end{equation}
where $\delta_{i_1\dotso i_k}^{j_1\dotso j_k}$ denotes the generalized Kronecker delta,
\[ \delta_{i_1\dotso i_k}^{j_1\dotso j_k} :=
    \begin{cases}
     1, & \text{if $(i_1,\dotsc,i_k)$ is an even permutation of $(j_1,\dotsc,j_k)$,} \\
     -1, & \text{if $(i_1,\dotsc,i_k)$ is an odd permutation of $(j_1,\dotsc,j_k)$,} \\
     0, & \text{otherwise},
    \end{cases}
\]
and Einstein summation convention is employed.  The \emph{$k$-th Newton tensor} of $B$ is the matrix $T_k(B)\in\Sym_d$ with components
\begin{equation}
 \label{eqn:newton_kronecker}
 T_k(B)_i^j := \frac{1}{k!}\delta_{ii_1\dotso i_k}^{jj_1\dotso j_k} B_{j_1}^{i_1}\dotsm B_{j_k}^{i_k} .
\end{equation}
It is clear from~\eqref{eqn:sigma_kronecker} and~\eqref{eqn:newton_kronecker} that $\sigma_k(B)$ and $T_k(B)$ are homogeneous polynomials of degree $k$ in $B$, and hence can both be polarized.  We require the mixed symmetric functions and Newton tensors obtained by inputting two matrices with a given multiplicity into these polarizations.  More precisely, given nonnegative integers $k,\ell$ with $k\geq\ell$ and matrices $B,C\in\Sym_d$, we define
\begin{align*}
 \sigma_{k,\ell}(B,C) & := \frac{1}{k!}\delta_{i_1\dotso i_k}^{j_1\dotso j_k} B_{j_1}^{i_1}\dotsm B_{j_\ell}^{i_\ell} C_{j_{\ell+1}}^{i_{\ell+1}} \dotsm C_{j_k}^{i_k}, \\
 T_{k,\ell}(B,C)_i^j & := \frac{1}{k!}\delta_{ii_1\dotso i_k}^{jj_1\dotso j_k} B_{j_1}^{i_1}\dotsm B_{j_\ell}^{i_\ell} C_{j_{\ell+1}}^{i_{\ell+1}} \dotsm C_{j_k}^{i_k} .
\end{align*}
That is, $\sigma_{k,\ell}(B,C)$ (resp.\ $T_{k,\ell}(B,C)$) is the polarization of $\sigma_k$ (resp.\ $T_k$) evaluated at $\ell$ factors of $B$ and $k-\ell$ factors of $C$.

When considering the $k$-th elementary symmetric function $\sigma_k$, we usually restrict our attention to the \emph{positive $k$-cone}
\[ \Gamma_k^+ := \left\{ B\in\Sym_n \suchthat \sigma_1(B),\dotsc,\sigma_k(B)>0 \right\} \]
and its closure
\[ \overline{\Gamma_k^+} := \left\{ B\in\Sym_n \suchthat \sigma_1(B),\dotsc,\sigma_k(B)\geq 0 \right\} . \]
The primary reasons for this are that $T_{k-1}(B)$ is positive definite (resp.\ nonnegative) for all $B\in\Gamma_k^+$ (resp.\ all $B\in\overline{\Gamma_k^+}$) and that $\Gamma_k^+$ and $\overline{\Gamma_k^+}$ are convex~\cite{CaffarelliNirenbergSpruck1985}.

Let $(X^{n+1},g)$ be a Riemannian manifold.  The \emph{Schouten tensor} $P$ of $g$ is the section
\[ P := \frac{1}{n-1}\left(\Ric - \frac{R}{2n}g\right) \]
of $S^2T^\ast X$, where $\Ric$ is the Ricci tensor and $R:=\tr_g\Ric$ is the scalar curvature of $g$.  We denote by $g^{-1}$ the musical isomorphism mapping $T^\ast X$ to $TX$ and its extension to tensor bundles.  For example, $g^{-1}P$ is the section of $\End(TX)$ defined by
\[ g\left((g^{-1}P)(Y),Z\right) = P(Y,Z) \]
for all sections $Y,Z$ of $TX$.  The \emph{$\sigma_k$-curvature} of $(X,g)$ is
\[ \sigma_k^g := \sigma_k\left(g^{-1}P\right) . \]
and the \emph{$k$-th Newton tensor} is
\[ T_k^g := T_k\left(g^{-1}P\right) . \]
For example, $\sigma_1^g=\frac{1}{2n}R$ is a multiple of the scalar curvature.  When the metric is clear by context, we omit the superscript $g$.  We write $g\in\Gamma_k^+$ (resp.\ $g\in\overline{\Gamma_k^+}$) if for all points $p\in M$, the symmetric matrix representing $(g^{-1}P)_p\in\End(T_pM)$ lies in $\Gamma_k^+$ (resp. $\overline{\Gamma_k^+})$.

Viaclovsky~\cite{Viaclovsky2000} computed the linearization of the $\sigma_k$-curvature within a conformal class (locally conformally flat if $k=3$).  His result can be restated as follows:

\begin{lem}
 \label{lem:conformal_variation_interior}
 Let $(X^{n+1},g)$ be a Riemannian manifold and let $k\in\bN$.  If $k\geq 3$, assume additionally that $g$ is locally conformally flat.  For any $\Upsilon\in C^\infty(X)$, it holds that
 \begin{equation}
  \label{eqn:genl_trans_sigmak}
  \left.\frac{\partial}{\partial t}\right|_{t=0} \sigma_k^{e^{2t\Upsilon}g} = -2k\Upsilon\sigma_k^g - \delta\left(T_{k-1}^g(\nabla\Upsilon)\right) ,
 \end{equation}
 where $\delta=\tr_g\nabla$ denotes the divergence on $(X,g)$.
\end{lem}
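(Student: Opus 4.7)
The plan is to compute $\partial_t|_{t=0}\sigma_k^{e^{2t\Upsilon}g}$ directly by differentiating $\sigma_k\bigl((e^{2t\Upsilon}g)^{-1}P_{e^{2t\Upsilon}g}\bigr)$ through the chain rule, and then to reorganize the Hessian term as a divergence. First I would record the two infinitesimal identities needed: for $g_t:=e^{2t\Upsilon}g$ one has $\partial_t g_t^{-1}|_{t=0}=-2\Upsilon g^{-1}$, and for the Schouten tensor
\[
 \left.\frac{\partial}{\partial t}\right|_{t=0} P_{g_t} = -\nabla^2\Upsilon,
\]
which follows from the standard conformal transformation rule for $P$ by discarding the quadratic-in-$d\Upsilon$ terms. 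Writing $A:=g^{-1}P$ and $A_t:=g_t^{-1}P_{g_t}$, these combine to give $\partial_t A_t|_{t=0}=-2\Upsilon A-\nabla^2\Upsilon$, where $\nabla^2\Upsilon$ is regarded as an endomorphism via $g^{-1}$.

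Next I would apply the general polarization identity for $\sigma_k$: the differential of $B\mapsto\sigma_k(B)$ on $\Sym_n$ at $B$ in direction $H$ is $\tr\bigl(T_{k-1}(B)H\bigr)$, which is immediate from \eqref{eqn:sigma_kronecker}--\eqref{eqn:newton_kronecker}. Combining with the previous step,
\[
 \left.\frac{\partial}{\partial t}\right|_{t=0}\sigma_k^{g_t}
 = \tr\bigl(T_{k-1}^g\cdot(-2\Upsilon A-\nabla^2\Upsilon)\bigr)
 = -2\Upsilon\,\tr(T_{k-1}^g A) - \tr\bigl(T_{k-1}^g\nabla^2\Upsilon\bigr).
\]
The first term is handled by the classical Euler-type identity $\tr(T_{k-1}(B)B)=k\sigma_k(B)$, again a direct consequence of~\eqref{eqn:sigma_kronecker}--\eqref{eqn:newton_kronecker}, which gives $-2k\Upsilon\sigma_k^g$ exactly as in the stated formula.

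It remains to identify the Hessian term with $\delta(T_{k-1}^g(\nabla\Upsilon))$. Expanding the divergence,
\[
 \delta\bigl(T_{k-1}^g(\nabla\Upsilon)\bigr)
 = \bigl(\delta T_{k-1}^g\bigr)(\nabla\Upsilon) + \tr\bigl(T_{k-1}^g\nabla^2\Upsilon\bigr),
\]
so the identity reduces to $\delta T_{k-1}^g=0$. This is where the hypothesis enters: for $k=1$ this is trivial, for $k=2$ it follows from the once-contracted second Bianchi identity applied to $P$, and for $k\geq 3$ in the locally conformally flat case it follows because local conformal flatness forces $P$ to be Codazzi, so the antisymmetrizations hidden in the generalized Kronecker delta annihilate all derivative terms. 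I expect this last divergence-free identity for $T_{k-1}$ to be the only nonroutine step; however, it is standard (it is exactly Viaclovsky's observation that underlies the variational nature of $\sigma_k$ in these regimes), and a clean proof proceeds by writing $\nabla_j(T_{k-1})^j_i$ as a sum of terms each involving $\nabla_{[a}P_{b]c}$ through the antisymmetry of $\delta^{jj_1\cdots j_{k-1}}_{ii_1\cdots i_{k-1}}$ and invoking the Codazzi property. Substituting back yields the claimed formula.
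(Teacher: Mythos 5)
Your proof is correct. The paper itself gives no proof of this lemma---it cites the result to Viaclovsky---but your argument is the standard derivation underlying that citation: differentiate $\sigma_k\bigl(g_t^{-1}P_{g_t}\bigr)$ via the chain rule (using $\partial_t g_t^{-1}|_{t=0}=-2\Upsilon g^{-1}$ and $\partial_t P_{g_t}|_{t=0}=-\nabla^2\Upsilon$), apply the polarization formula $D\sigma_k(B)[H]=\tr\bigl(T_{k-1}(B)H\bigr)$ together with Euler's identity $\tr\bigl(T_{k-1}(B)B\bigr)=k\sigma_k(B)$, and convert the Hessian term to a divergence using $\delta T_{k-1}^g=0$, which holds trivially for $k=1$, by the contracted second Bianchi identity for $k=2$, and by the Codazzi property of $P$ (vanishing Cotton tensor) in the locally conformally flat case for $k\geq 3$.
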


It follows that, regarded as a PDE in a conformal class, the equation $\sigma_k^g=f$ is second-order.  Moreover, it is elliptic (resp.\ degenerate elliptic) if and only if $T_{k-1}^g$ is positive or negative definite (resp.\ positive or negative semi-definite).  In particular, the equation $\sigma_k^g=0$ is degenerate elliptic within the cone $\overline{\Gamma_k^+}$.

We now describe the function spaces in which we work and the manner in which we extend their elements to the interior.  To that end, fix a compact Riemannian manifold with boundary $(X^{n+1},g)$ and let $(M^n,h)$ denote the boundary; i.e.\ $M:=\partial X$ and $h:=g\rv_{TM}$.  Fix also $j\in\bN$ and $\alpha\in(0,1)$.  Given $w\in C^{j,\alpha}(M)$, we denote
\begin{equation}
 \label{eqn:mT_defn}
 \sT_w^{j,\alpha} := \left\{ \phi\in C^{j,\alpha}(M) \suchthat \oint_M \phi\,\dvol_{h_w} = 0 \right\} ,
\end{equation}
where $\dvol_{h_w}$ denotes the Riemannian volume element of the metric $h_w:=e^{2w}h$ on $M$.  The space $\sT_1^{j,\alpha}$ corresponding to the choice $w=1$ is of particular interest.  We extend elements of $\sT_1^{j,\alpha}$ to $X$ by solving the boundary value problem
\begin{equation}
 \label{eqn:linearized_extension}
 \begin{cases}
  \delta\left(T_{k-1}(\nabla v)\right) = 0, & \text{in $X$}, \\
  v = \phi, & \text{on $M$} .
 \end{cases}
\end{equation}
Here $\delta, T_{k-1}, \nabla$ are all with respect to the metric $g$.
The important properties of this extension which we require are contained in the following lemma:

\begin{lem}
 \label{lem:embedding}
 Let $(X^{n+1},g)$ be a compact Riemannian manifold with boundary $M^n:=\partial X$ and let $j\in\bN$ and $\alpha\in(0,1)$.  Suppose that $T_{k-1}>0$.  Then for every $\phi\in C^{j,\alpha}(M)$, there is a unique solution $v_\phi\in C^{j+2,\alpha}\left(\Int(X)\right)\cap C^{j,\alpha}(X)$ of~\eqref{eqn:linearized_extension}, where $\Int(X)$ denotes the interior of $X$.
\end{lem}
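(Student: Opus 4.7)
The plan is to reduce this to standard linear elliptic theory for the divergence-form operator $Lv := \delta\bigl(T_{k-1}(\nabla v)\bigr)$, using the positivity assumption $T_{k-1}>0$ as the key input.

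First, I would observe that since $X$ is compact and $T_{k-1}$ is a positive-definite smooth section of $\End(TX)$, there exist constants $0<\lambda\leq\Lambda$ such that the symbol $T_{k-1}^{ij}$ satisfies $\lambda g^{ij}\leq T_{k-1}^{ij}\leq\Lambda g^{ij}$ pointwise. Hence $L$ is a linear, symmetric, uniformly elliptic second-order operator in divergence form with smooth coefficients (since $g$, and therefore the Schouten tensor and its Newton tensor, is smooth). The natural bilinear form
\[ B(u,v) := \int_X T_{k-1}(\nabla u,\nabla v)\,\dvol_g \]
is symmetric and coercive on $H_0^1(X)$.

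Next, uniqueness would follow from an energy estimate: if $w:=v_1-v_2$ is the difference of two solutions, then $Lw=0$ in $X$ and $w=0$ on $M$, so integration by parts gives $B(w,w)=0$, hence $\nabla w\equiv 0$, and the boundary condition forces $w\equiv 0$. For existence, I would extend $\phi$ to some $\tilde\phi\in C^{j,\alpha}(X)$ using a collar neighborhood of $M$ and the extension theorems for H\"older spaces, and then apply Lax--Milgram to solve $Lw=-L\tilde\phi$ in $H_0^1(X)$; setting $v_\phi:=\tilde\phi+w$ produces a weak solution of~\eqref{eqn:linearized_extension}.

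Finally, I would establish regularity in two steps. Interior regularity is immediate from standard elliptic theory: since $L$ has smooth coefficients and $Lv_\phi=0$ in $\Int(X)$, one has $v_\phi\in C^\infty(\Int(X))\subset C^{j+2,\alpha}(\Int(X))$. Boundary regularity follows from the Schauder estimates for linear elliptic Dirichlet problems (see, e.g., Gilbarg--Trudinger): applied with the vanishing right-hand side $f=0\in C^{j-2,\alpha}$ and boundary data $\phi\in C^{j,\alpha}(M)$, these estimates yield $v_\phi\in C^{j,\alpha}(X)$.

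The content here is mild and the main ``obstacle'' is bookkeeping: because the boundary data is only $C^{j,\alpha}$ and not $C^{j+2,\alpha}$, one cannot expect the full two-derivative gain up to $M$, which is why the statement separates interior and global regularity. Once uniform ellipticity is secured by $T_{k-1}>0$, the remainder is a direct application of the Lax--Milgram theorem together with interior and boundary Schauder estimates.
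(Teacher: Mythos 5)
Your argument is correct and runs along essentially the same lines as the paper's one-line proof, which simply observes that $T_{k-1}>0$ makes~\eqref{eqn:linearized_extension} a standard uniformly elliptic Dirichlet problem and cites Gilbarg--Trudinger; you are just filling in the routine details (Lax--Milgram existence, energy uniqueness, interior and boundary Schauder regularity). There is no substantive difference from the paper's approach.
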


\begin{proof}
 Since $T_{k-1}>0$, this is a standard elliptic Dirichlet boundary value problem to which one can apply standard theory (e.g.\ \cite[Theorem~6.2]{GilbargTrudinger2001}).
\end{proof}

We also want to extend functions via the Dirichlet problem
\begin{equation}
 \label{eqn:metric_extension_interior}
 \begin{cases}
  \sigma_k^{g_{\cw}}\equiv0, & \text{in $X$}, \\
  g_{\cw} \in \overline{\Gamma_k^+}, \\
  \cw = w, & \text{on $M$} ,
 \end{cases}
\end{equation}
where $g_{\cw}:=e^{2\cw}g$.  This problem is equivalent to~\eqref{eqn:metric_extension}.  Guan~\cite{Guan2007} showed that if $w\in C^{4,\alpha}(M)$ is such that there is a smooth metric $\cg\in\Gamma_k^+$ with $\cg\rv_{TM}=e^{2w}g\rv_{TM}$, then there is a solution $\cw\in C^{1,1}(X)$ of~\eqref{eqn:metric_extension_interior}.  The first- and last-named authors~\cite{CaseWang2016s} showed that this solution is unique.  We require a version of this result for which the extension has better regularity.  This is done by working with a more restrictive class of functions on $M$.  First, for convenience, we introduce the following terminology.

\begin{defn}
 Let $(X,g)$ be a compact manifold with boundary $M$, let $j,k\in\bN$, $j\geq4$, and let $\alpha\in(0,1)$.  A function $w\in C^{j,\alpha}(M)$ is \emph{$k$-admissible} if there is a function $\cw\in C^{j,\alpha}(X)$ such that $\cw\rv_M=w$ and $g_{\cw}\in\Gamma_k^+$.  We denote
 \[ \Cadm{k}^{j,\alpha}(M) := \left\{ w\in C^{j,\alpha}(M) \suchthat \text{$w$ is $k$-admissible} \right\} . \]
\end{defn}

The above discussion implies that if $w\in\Cadm{k}^{j,\alpha}(M)$, $j\geq4$, then there is a unique extension $\cw\in C^{1,1}(X)$ satisfying~\eqref{eqn:metric_extension_interior}.  We denote by $\cg_w:=e^{2\cw}g$ the metric determined by this extension.  The distinction between ${\cg}_w$ and $g_{\cw}$ is that $w$ is only defined on the boundary, while $\cw$ is defined in the interior.  So the subscript $w$ here is emphasizing that we have a metric determined only by data on the boundary.

To get extensions with improved regularity, we introduce the spaces
\begin{equation}
 \label{eqn:mV_defn}
 \sV_k^{j,\alpha} := \left\{ w\in\Cadm{k}^{j,\alpha}(M) \suchthat \cg_w\in\Gamma_{k-1}^+, T_{k-1}^{\cg_w}>0, \oint_M \dvol_{h_w}=1 \right\} .
\end{equation}
The normalization of the volume is made to avoid the homothety invariance of~\eqref{eqn:constant_mHk_problem}.  The key point here is that the assumption $T_{k-1}^{\cg_w}>0$ allows us to apply the Implicit Function Theorem to conclude that $\sV_k^{j,\alpha}$ is a Banach manifold and that the extensions of elements of $\sV_k^{j,\alpha}$ have improved regularity.

\begin{prop}
 \label{prop:mV_banach}
 Let $(X,g)$ be a compact manifold with boundary $M$, let $j,k\in\bN$ with $j\geq4$, and let $\alpha\in(0,1)$.  Then:
 \begin{enumerate}
  \item for every $w\in\sV_k^{j,\alpha}(M)$, the extension $\cw$ by~\eqref{eqn:metric_extension_interior} is in $C^{j,\alpha}(X)$;
  \item $\sV_k^{j,\alpha}$ is a Banach manifold, and for every $w\in\sV_k^{j,\alpha}$, the tangent space $T_w\sV_k^{j,\alpha}$ is isomorphic to $\sT_w^{j,\alpha}$.
 \end{enumerate}
\end{prop}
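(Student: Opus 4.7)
The strategy proceeds in two logically separate steps: first, use direct elliptic regularity to upgrade the $C^{1,1}$ Guan extension $\cw$ of $w\in\sV_k^{j,\alpha}$ to $C^{j,\alpha}(X)$, establishing~(1); second, apply the Implicit Function Theorem, with Lemma~\ref{lem:embedding} supplying the required invertibility of the linearized Dirichlet problem, to deduce the Banach manifold structure in~(2).

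For~(1), the assumption $T_{k-1}^{\cg_w}>0$ means the PDE $\sigma_k^{g_\cw}=0$ is uniformly elliptic at Guan's $C^{1,1}$ solution. Evans--Krylov-type interior estimates, exploiting the concavity of $\sigma_k^{1/k}$ on $\Gamma_k^+$, together with standard boundary estimates and the boundary regularity $w\in C^{j,\alpha}(M)$, $j\geq 4$, upgrade $\cw$ to $C^{2,\alpha}(X)$. Once $\cw\in C^{2,\alpha}$, the linearization of the equation has $C^{0,\alpha}$ coefficients, so differentiating the PDE in tangential directions and iteratively applying linear Schauder estimates pushes the regularity all the way to $C^{j,\alpha}(X)$.

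For~(2), introduce the map
\begin{equation*}
  \Phi\colon U \longrightarrow C^{j-2,\alpha}(X)\times C^{j,\alpha}(M),\qquad
  \Phi(\cu):=\bigl(\sigma_k^{g_\cu},\,\cu\rv_M\bigr),
\end{equation*}
where $U\subset C^{j,\alpha}(X)$ is the open set of $\cu$ with $g_\cu\in\Gamma_k^+$. At the $C^{j,\alpha}$ solution $\cw$ produced by~(1), Lemma~\ref{lem:conformal_variation_interior} yields
\begin{equation*}
  D\Phi_\cw(v)=\bigl(-\delta\bigl(T_{k-1}^{g_\cw}(\nabla v)\bigr),\,v\rv_M\bigr),
\end{equation*}
with $\delta$ and $\nabla$ taken with respect to $g_\cw$. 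By Lemma~\ref{lem:embedding} applied at $g_\cw$, combined with the Fredholm alternative for uniformly elliptic Dirichlet problems (the homogeneous Dirichlet kernel is trivial), $D\Phi_\cw$ is a Banach-space isomorphism. The Implicit Function Theorem then furnishes a smooth local parametrization $w\mapsto\cw$ of solutions to $\sigma_k^{g_\cw}=0$ by their boundary data; uniqueness of Guan's $C^{1,1}$ solution forces this to coincide with the extension defining $\sV_k^{j,\alpha}$, so the extension map is smooth into $C^{j,\alpha}(X)$.

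With this smooth extension map at hand, the conditions $\cg_w\in\Gamma_{k-1}^+$ and $T_{k-1}^{\cg_w}>0$ on $\cw$ pull back to open conditions on $w$, as does $k$-admissibility. The volume functional $w\mapsto\oint_M\dvol_{h_w}$ is smooth with derivative $\phi\mapsto n\oint_M\phi\,\dvol_{h_w}$, which is nonzero (take $\phi\equiv 1$), so $\sV_k^{j,\alpha}$ is carved out as a regular level set inside an open subset of $C^{j,\alpha}(M)$ and is a codimension-one Banach submanifold with tangent space
\[
  T_w\sV_k^{j,\alpha}=\Bigl\{\phi\in C^{j,\alpha}(M)\;\Bigm|\;\oint_M\phi\,\dvol_{h_w}=0\Bigr\}=\sT_w^{j,\alpha}.
\]
The most delicate step is the $C^{2,\alpha}$ upgrade in~(1): because $\sigma_k=0$ places $\cw$ on $\partial\Gamma_k^+$ (where $\sigma_k^{1/k}$ fails to be smooth), one must argue carefully that $T_{k-1}>0$ restores enough uniform ellipticity for the Evans--Krylov interior and boundary estimates to apply. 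Everything that follows is comparatively routine Schauder bootstrapping and a standard application of the Implicit Function Theorem in Banach spaces.
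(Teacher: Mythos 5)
Your overall strategy matches the paper's: dispose of regularity in~(1) by elliptic theory, then obtain the Banach manifold structure in~(2) via the Implicit Function Theorem for the Dirichlet problem $\bigl(\sigma_k^{g_W},\ W\rv_M - w\bigr)=(0,0)$.  The volume-constraint argument at the end is also exactly what the paper does.  That said, two points deserve attention.

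First, a genuine slip in~(2): you declare the domain of $\Phi$ to be $U=\{\cu\in C^{j,\alpha}(X)\suchthat g_{\cu}\in\Gamma_k^+\}$, but the point at which you want to linearize is the extension $\cw$, for which $\sigma_k^{g_{\cw}}\equiv 0$, so $g_{\cw}\in\partial\Gamma_k^+$ and $\cw\notin U$.  The Inverse Function Theorem simply cannot be applied at a point outside the (open) domain.  The fix is cheap — $\sigma_k^{g_{\cu}}$ makes sense as a smooth map on an open neighborhood of $\cw$ in $C^{j,\alpha}(X)$ with no cone restriction, which is exactly how the paper sets up its map $\Psi$ on $C^{j,\alpha}(X)\times\Cadm{k}^{j,\alpha}(M)$ — but as written the step is not valid.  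Once the domain is corrected, your use of Lemma~\ref{lem:embedding} plus the Fredholm alternative to invert $D\Phi_{\cw}$, and your identification of $T_w\sV_k^{j,\alpha}$ with $\sT_w^{j,\alpha}$ via the volume submersion, agree with the paper.

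Second, on~(1): your invocation of Evans--Krylov through the concavity of $\sigma_k^{1/k}$ on $\Gamma_k^+$ is not directly available precisely because $\sigma_k^{g_{\cw}}\equiv 0$, so the Hessian of $\cw$ takes values on $\partial\Gamma_k^+$, where $\sigma_k^{1/k}$ has no concavity to exploit (its gradient degenerates).  You flag this yourself, correctly, but you do not resolve it; uniform ellipticity (from $T_{k-1}>0$) is not a substitute for the concavity hypothesis in Evans--Krylov.  The paper's own treatment of~(1) is also terse — it asserts ``standard elliptic regularity when $T_{k-1}^{\cg_w}>0$'' without elaboration — so you are not in worse shape than the source, but a genuinely complete argument would need to identify a concave (or convex) reformulation of the operator valid on $\Gamma_{k-1}^+\cap\{T_{k-1}>0\}$, or appeal to some other route to $C^{2,\alpha}$ from the $C^{1,1}$ bound, before the tangential-differentiation-plus-Schauder bootstrap you describe can begin.
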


\begin{proof}
(1) is straightforward by standard elliptic regularity when $T_{k-1}^{\cg_w}>0$. \\
(2) Since $W\mapsto \sigma_k^{g_W}$, $g_W:=e^{2W}g$, is a $C^2$-map, it readily follows that $\Cadm{k}^{j,\alpha}(M)$ is an open subset of $C^{j,\alpha}(M)$.

 Define
 \[ \Psi\colon C^{j,\alpha}(X)\times \Cadm{k}^{j,\alpha}(M) \to C^{j-2,\alpha}(X) \times C^{j,\alpha}(M) \]
 by
 \begin{equation}
  \label{eqn:Psi}
  \Psi(W,w) := \left( \sigma_k^{g_W}, W\rv_M - w\right),
 \end{equation}
 where $g_W:=e^{2W}g$ and define
 \[ \sW_k^{j,\alpha} := \left\{ w\in \Cadm{k}^{j,\alpha}(M) \suchthat \cg_w\in\Gamma_{k-1}^+, T_{k-1}^{\cg_w} > 0 \right\} . \]
 Suppose that $(W_0,w_0)\in\Psi^{-1}\bigl((0,0)\bigr)$ is such that $g_{W_0}\in\Gamma_{k-1}^+$ and $T_{k-1}^{g_{W_0}}>0$.  By the uniqueness of solutions of~\eqref{eqn:metric_extension_interior}, $W_0=\cw_0$, and hence $w_0\in\sW_k^{j,\alpha}$.  Moreover, the linearization $D^1\Psi_{(W_0,w_0)}\colon C^{j,\alpha}(X) \to C^{j-2,\alpha}(X) \times C^{j,\alpha}(M)$ in the first component is
 \[ D^1\Psi_{(W_0,w_0)}(V) = \left( -\delta\left(T_{k-1}(\nabla V)\right), V\rv_M \right) , \]
 where all geometric quantities are computed with respect to $g_{W_0}$.  In particular, given $(\Phi,\phi)\in C^{j-2,\alpha}(X)\times C^{j,\alpha}(M)$, it holds that $D^1\Psi_{(W_0,w_0)}(V)=(\Phi,\phi)$ if and only if
 \begin{equation}
  \label{eqn:DPsi}
  \begin{cases}
   -\delta\left(T_{k-1}(\nabla V)\right) = \Phi, & \text{in $X$}, \\
   V\rv_M = \phi, & \text{on $M$}.
  \end{cases}
 \end{equation}
 Since $T_{k-1}>0$, standard elliptic theory applied to~\eqref{eqn:DPsi} implies that $D^1\Psi_{(W_0,w_0)}$ is bijective.  The Implicit Function Theorem (e.g.\ \cite[Theorem~17.6]{GilbargTrudinger2001}) then implies that there is a neighborhood $\sU_0\subset \Cadm{k}^{j,\alpha}(M)$ of $w_0$ and a continuous map $\mE\colon\sU_0\to C^{j,\alpha}(X)$ such that $\mE(w_0)=W_0$ and
 \[ \Psi\left(\mE(w),w\right) = (0,0) \]
 for all $w\in\sU_0$.  In particular, $\hg_w:=e^{2\mE(w)}g$ satisfies $\sigma_k^{\hg_w}\equiv0$.  Using the openness in $C^{j,\alpha}(X)$ of the conditions $T_{k-1}^{g_W}>0$ and $g_W\in\Gamma_{k-1}^+$, we may also conclude, by shrinking $\sU_0$ if necessary, that $\hg_w\in\Gamma_{k-1}^+$ and $T_{k-1}^{\hg_w}>0$ for all $w\in\sU_0$.  Thus, by the uniqueness of solutions of~\eqref{eqn:metric_extension_interior}, $\sU_0\subset\sW_k^{j,\alpha}$ and $T_w\sW_k^{j,\alpha}$ is isomorphic to $C^{j,\alpha}(M)$.  Finally, since the volume map $\mV\colon\sW_k^{j,\alpha}\to\bR$ given by
 \[ \mV(w) := \oint_M \dvol_{h_w} \]
 is a submersion and since $\sV_k^{j,\alpha}=\mV^{-1}(1)$, we readily conclude that $\sV_k^{j,\alpha}$ is a Banach manifold and $T_w\sV_k^{j,\alpha}$ is isometric to $\sT_w^{j,\alpha}$ for all $w\in\sV_k^{j,\alpha}$.

\end{proof}

\begin{remark}
 It would be interesting to know if for any $w\in \Cadm{k}^{2,\alpha}$, there is a unique solution $\cw\in C^{1,1}(X)$ of~\eqref{eqn:metric_extension_interior}.  If so, then we could define $\sV^{2,\alpha}$ without requiring $T_{k-1}^{g_{\cw}}>0$ and readily extend Theorem~\ref{thm:bifurcation} to construct conformal classes containing nonhomothetic $C^{1,1}$ solutions of~\eqref{eqn:constant_Hk_problem}.
\end{remark}

\section{The $H_k$-curvature and formal properties}
\label{sec:bg}

We now turn to describing the $H_k$-curvature and formal properties related to the pair $(\sigma_k;H_k)$.  To that end, let $(X^{n+1},g)$ be a compact Riemannian manifold with boundary $M^n=\partial X$.  Suppose further that $M$ has unit volume with respect to the metric $h:=\iota^\ast g$ induced by $M$, where $\iota\colon M\to X$ is the inclusion map.  Denote by $\eta$ the outward-pointing unit normal vector field with respect to $g$ along $M$.  The \emph{second fundamental form $A$} of $M$ is the section of $S^2T^\ast M$ defined by
\[ A(Y,Z) := g\left(\nabla_Y\eta,Z\right) \]
for all sections $Y,Z$ of $TM$.  We denote by $h^{-1}$ the musical isomorphism mapping $T^\ast M$ to $TM$ and its extension to tensor bundles.  The \emph{$H_k$-curvature} of $M$ is
\begin{equation}\label{eqn:Hk_defn} H_k^g := \sum_{j=0}^{k-1} \frac{(2k-j-1)!(n+1-2k+j)!}{j!(n+1-k)!(2k-2j-1)!!}\sigma_{2k-j-1,j}\left(h^{-1}\iota^\ast P, h^{-1}A\right) . \end{equation}
For example, $H_1^g=\frac{1}{n}\tr_hA$ is the mean curvature of $M$.

Escobar~\cite{Escobar1988} showed that the pair $(\sigma_1;H_1)$ is variational on manifolds with boundary, and S.\ Chen~\cite{Chen2009s} introduced the $H_k$-curvatures so that the same is true of the pair $(\sigma_k;H_k)$ when $k\leq2$ or $g$ is locally conformally flat.  This fact is an immediate consequence of Lemma~\ref{lem:conformal_variation_interior} and the conformal linearization of the $H_k$-curvature.

\begin{lem}[{see~\cite[Lemma~2.2]{CaseWang2016s}}]
 \label{lem:conformal_variation}
 Let $(X^{n+1},g)$ be a Riemannian manifold with boundary $M=\partial X$ and let $k\in\bN$.  If $k\geq 3$, assume additionally that $g$ is locally conformally flat.  For any $\Upsilon\in C^\infty(X)$, it holds that
 \begin{equation}
  \label{eqn:genl_trans_Hk}
  \left.\frac{\partial}{\partial t}\right|_{t=0} H_k^{e^{2t\Upsilon}g} = -(2k-1)\Upsilon H_k^g + T_{k-1}(\eta,\nabla\Upsilon) - \odelta\left(S_{k-1}(\onabla\Upsilon)\right),
 \end{equation}
 where $\onabla$ and $\odelta=\tr_{h}\onabla$ denote the Levi-Civita connection and divergence of $h$, respectively, and
 \begin{equation}
  \label{eqn:Sk_defn}
  S_{k-1} := \sum_{j=0}^{k-2} \frac{(2k-j-3)!(n+2-2k+j)!}{j!(n+1-k)!(2k-2j-3)!!}T_{2k-j-3,j}\left(h^{-1}\iota^\ast P, h^{-1}A\right) ,
 \end{equation}
 with the convention that the empty summation equals zero.
\end{lem}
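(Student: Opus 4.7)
My plan is to compute the variation of $H_k^g$ term-by-term from the definition in~\eqref{eqn:Hk_defn}, using the conformal transformation laws for $P$, the outward unit normal $\eta$, and the second fundamental form $A$, and then to regroup the resulting expression. Under $\hat g = e^{2t\Upsilon}g$ one has $\hat\eta = e^{-t\Upsilon}\eta$, $\hat A = e^{t\Upsilon}(A + t\,\eta(\Upsilon)h)$, and by the Gauss decomposition $\iota^\ast(\nabla^2\Upsilon) = \onabla^2\Upsilon - \eta(\Upsilon)A$; combined with the standard formula $\iota^\ast\hat P = \iota^\ast P - t(\onabla^2\Upsilon - \eta(\Upsilon)A) + O(t^2)$ and $\hat h^{-1} = e^{-2t\Upsilon}h^{-1}$ along $M$, the endomorphisms $B := h^{-1}\iota^\ast P$ and $C := h^{-1}A$ of $TM$ have first-order variations
\begin{align*}
 \partial_t \hat B\rv_{t=0} &= -2\Upsilon B + \eta(\Upsilon) C - h^{-1}\onabla^2\Upsilon,\\
 \partial_t \hat C\rv_{t=0} &= -\Upsilon C + \eta(\Upsilon) I.
\end{align*}

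I would then differentiate each summand $\sigma_{2k-j-1,j}(B,C)$ in~\eqref{eqn:Hk_defn} using the polarization chain rule
\[ \partial_t \sigma_{p,\ell}(B,C)\rv_{t=0} = \tr\bigl(T_{p,\ell}^B(B,C)\,\partial_t B\bigr) + \tr\bigl(T_{p,\ell}^C(B,C)\,\partial_t C\bigr), \]
where $T_{p,\ell}^B$ and $T_{p,\ell}^C$ denote the partial Newton-type tensors obtained by differentiating $\sigma_{p,\ell}$ in its first and second arguments. Substituting the variations of $B$ and $C$ above and sorting the terms produces three groups: pointwise multiples of $\Upsilon$, pointwise multiples of $\eta(\Upsilon)$, and contractions against $\onabla^2\Upsilon$. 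The first group reassembles into $-(2k-1)\Upsilon H_k^g$ by summing the total scaling weights of each $\sigma_{2k-j-1,j}(B,C)$, which is $2k-1$ once one accounts for the different conformal weights of $B$ and $C$. The second group telescopes across the index $j$, thanks to a Pascal-type identity among the coefficients $\frac{(2k-j-1)!(n+1-2k+j)!}{j!(n+1-k)!(2k-2j-1)!!}$, and reassembles into $T_{k-1}^g(\eta,\nabla\Upsilon)$; here one uses the algebraic relation between the normal-tangential components of the ambient Newton tensor and the polarized boundary tensors built from $\iota^\ast P$ and $A$.

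The third group contracts $\onabla^2\Upsilon$ against exactly the tensor $S_{k-1}$ defined in~\eqref{eqn:Sk_defn}, yielding $-\tr_h\bigl(S_{k-1}\cdot\onabla^2\Upsilon\bigr)$. To rewrite this as the boundary divergence $-\odelta\bigl(S_{k-1}(\onabla\Upsilon)\bigr)$ I would invoke the identity $\odelta S_{k-1}=0$, which holds automatically when $k\leq 2$ by the contracted second Bianchi identity for $\iota^\ast P$ combined with the Codazzi equation for $A$; when $k\geq 3$, it requires the vanishing of the ambient Weyl tensor, which is precisely why the local conformal flatness hypothesis is imposed. The main technical obstacle is the combinatorial bookkeeping that makes the three telescoping identities line up: the factorial and double-factorial coefficients in~\eqref{eqn:Hk_defn} and~\eqref{eqn:Sk_defn} are engineered so that these identities hold exactly, and their verification requires careful manipulation of the generalized Kronecker deltas underlying the polarized symmetric functions $\sigma_{p,\ell}$ and their partial Newton tensors.
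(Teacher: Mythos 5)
Your high-level plan (vary $B=h^{-1}\iota^\ast P$ and $C=h^{-1}A$, differentiate the polynomial term by term, regroup) is the natural one and matches the general shape of the argument in the cited reference, but the execution has errors that would prevent the proof from closing.

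First, the Gauss decomposition is mis-signed. With the paper's convention $A(Y,Z)=g(\nabla_Y\eta,Z)$ and $\eta$ outward-pointing, one has $\nabla_X Y = \onabla_X Y - A(X,Y)\eta$ for tangent $X,Y$, hence $\iota^\ast(\nabla^2\Upsilon)=\onabla^2\Upsilon + \eta(\Upsilon)A$, not $\onabla^2\Upsilon - \eta(\Upsilon)A$. (Check on the unit ball: for $\Upsilon=\tfrac12|x|^2$ one has $\iota^\ast\nabla^2\Upsilon=h$, $\onabla^2\Upsilon=0$, $\eta(\Upsilon)=1$, $A=h$.) Consequently $\partial_t\hat B\rv_{t=0}=-2\Upsilon B - \eta(\Upsilon)C - h^{-1}\onabla^2\Upsilon$, with the opposite sign on $\eta(\Upsilon)C$. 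Already for $k=2$ this sign flip changes the coefficient of $\eta(\Upsilon)$ in $\partial_t\sigma_{2,1}(B,C)$ from $\tfrac{n-1}{2}\tr B-\sigma_2(C)$ to $\tfrac{n-1}{2}\tr B+\sigma_2(C)$, and the final $\eta(\Upsilon)$-coefficient no longer collapses correctly.

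Second, and more seriously, the mechanism by which $T_{k-1}(\eta,\nabla\Upsilon)$ and $\odelta\bigl(S_{k-1}(\onabla\Upsilon)\bigr)$ arise is misidentified. The variations of $B$ and $C$ depend on $\Upsilon$ only through $\Upsilon$, $\eta(\Upsilon)$, and $\onabla^2\Upsilon$; there are no first-order tangential derivatives $\onabla\Upsilon$. So the raw computation yields an expression of the form $\alpha\,\Upsilon + \beta\,\eta(\Upsilon) + \gamma^{ij}\onabla_i\onabla_j\Upsilon$. The $\eta(\Upsilon)$-group therefore reassembles only to the scalar multiple $\eta(\Upsilon)\,T_{k-1}(\eta,\eta)$, which is just the normal--normal piece of $T_{k-1}(\eta,\nabla\Upsilon)$; it cannot produce the tangential piece $T_{k-1}(\eta,\onabla\Upsilon)$. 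That tangential piece must be absorbed by the divergence term: after writing $\odelta\bigl(S_{k-1}(\onabla\Upsilon)\bigr)=(\odelta S_{k-1})(\onabla\Upsilon)+\tr_h\bigl(S_{k-1}\onabla^2\Upsilon\bigr)$ and matching with the right-hand side of~\eqref{eqn:genl_trans_Hk}, what is actually needed is the boundary divergence identity
\[ \odelta S_{k-1} = T_{k-1}(\eta,\cdot)\rv_{TM}, \]
not $\odelta S_{k-1}=0$. Indeed, already for $k=2$, $S_1=\tfrac{1}{n-1}T_1(h^{-1}A)$ and the contracted Codazzi equation gives $\odelta S_1 = \pm\tfrac{1}{n-1}\Ric_g(\eta,\cdot)\rv_{TM}$, which is exactly $T_1(\eta,\cdot)\rv_{TM}=-P(\eta,\cdot)\rv_{TM}$ up to the sign determined by convention; it is certainly not zero in general. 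Your invocation of the ``contracted second Bianchi identity for $\iota^\ast P$ combined with the Codazzi equation for $A$'' in fact proves that $\odelta S_{k-1}$ equals a nonzero normal--tangential curvature contraction, which is what makes the bookkeeping close, and it is this identity (a boundary analogue of $\delta T_{k-1}^g=0$) that requires Cotton/Weyl flatness when $k\geq 3$.

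So the structure is right, but you need to (i) fix the sign in the Gauss decomposition, (ii) recognize that the $\eta(\Upsilon)$-terms only produce $\eta(\Upsilon)T_{k-1}(\eta,\eta)$, and (iii) replace the false identity $\odelta S_{k-1}=0$ with $\odelta S_{k-1}=T_{k-1}(\eta,\cdot)\rv_{TM}$ in order to recover $T_{k-1}(\eta,\onabla\Upsilon)$ from inside the divergence.
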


Note that the right-hand side of~\eqref{eqn:genl_trans_Hk} depends only on the horizontal two-jet and the full one-jet of $\Upsilon$, and hence Lemma~\ref{lem:conformal_variation} makes sense when $\Upsilon\in C^1(X)\cap C^2(M)$.  Moreover, when $k=1$, the right-hand side of~\eqref{eqn:genl_trans_Hk} makes sense when $\Upsilon\in C^1(X)$.

In this article we study metrics $g$ such that $\sigma_k^g\equiv0$ and $H_k^g$ is constant.  When $n+1\not=2k$, such metrics can be characterized as critical points of the functional~\eqref{eqn:mFk_noncritical_dimension} within the set of conformal metrics of unit boundary volume~\cite{CaseWang2016s}.  Here we find it more useful to characterize such metrics in terms of the function
\[ F^g \colon \sC^{j,\alpha} \to C^{j-2,\alpha}(X) \times C^{j-2,\alpha}(M) \]
given by
\begin{equation}
 \label{eqn:defn_F}
 F^g(w) := \left( \sigma_k^{g_w}, H_k^{g_w} - \oint_M H_k^{g_w}\,\dvol_{\iota^\ast g}\right),
\end{equation}
where $g_w:=e^{2w}g$ and
\[ \sC^{j,\alpha} := \left\{ w\in C^{j,\alpha}(X) \suchthat \oint_M \dvol_{h_w} = 1 \right\} \]
denotes the set of conformal factors which induce unit volume metrics on the boundary.  We emphasize that the integration in~\eqref{eqn:defn_F} is taken with respect to the background metric $\iota^\ast g=h$.  Thus $F^g$ is the gradient of the functional
\[ \sC^{j,\alpha} \ni w \mapsto G_k(g_w) \]
at $w=1$ under the volume constraint $\Vol_{h_w}(M)=1$, where $G_k$ is the functional~\eqref{eqn:mFk_noncritical_dimension}. A key point is that $\sigma_k^{g_w}\equiv0$ and $H_k^{g_w}$ is constant if and only if $w\in F^{-1}\left((0,0)\right)$.  We drop the superscript $g$ when the background metric is clear by context.

\begin{remark}
 \label{rk:mapping_properties}
 When $k=1$, the function $F$ in fact takes values in $C^{j-2,\alpha}(X)\times C^{j-1,\alpha}(M)$.  In order to make our treatment of the cases $k\in\bN$ more uniform, we ignore this gain in regularity for the remainder of this section, and only come back to it in Section~\ref{sec:fredholm}.
\end{remark}

Our main result, Theorem~\ref{thm:bifurcation}, is based on the properties of the linearization of $F$ at a point $w\in F^{-1}\left((0,0)\right)$.  To that end, note that the tangent space to $\sC^{j,\alpha}$ at $w\in\sC^{j,\alpha}$ is
\[ T_w\sC^{j,\alpha} = \left\{ v \in C^{j,\alpha}(X) \suchthat \oint_M \iota^\ast v\,\dvol_{h_w} = 0 \right\} . \]
As in our description of the relation between $G_k$ and $F$, we only require the linearization of $F$ at $w=1$.

\begin{prop}
 \label{prop:interior_dF}
 Let $(X^{n+1},g)$ be a compact Riemannian manifold with boundary $M^n:=\partial X$ and let $F$ be the function~\eqref{eqn:defn_F}.  If $1\in F^{-1}\left((0,0)\right)$, then the linearization $DF\colon T_1\sC^{j,\alpha}\to C^{j-2,\alpha}(X)\times C^{j-2,\alpha}(M)$ of $F$ at $w=1$ is given by
 \begin{multline*}
  DF(v) = \biggl( -\delta\left(T_{k-1}(\nabla v)\right), \\ T_{k-1}(\eta,\nabla v) - \odelta\left(S_{k-1}(\onabla v)\right) - (2k-1)H_kv - \oint T_{k-1}(\eta,\nabla v) \biggr),
 \end{multline*}
 where all geometric quantities are computed with respect to the metric $g$ in $X$ and the induced metric $h=g\rv_{TM}$ on $M$, as appropriate.
\end{prop}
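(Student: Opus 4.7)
The plan is to compute $DF(v)$ componentwise at $w=1$ by differentiating the two entries of~\eqref{eqn:defn_F} in the direction $v\in T_1\sC^{j,\alpha}$, and then to invoke Lemma~\ref{lem:conformal_variation_interior} and Lemma~\ref{lem:conformal_variation} to rewrite the resulting conformal variations. Throughout, all geometric quantities on the right-hand side will be those of the metric $g$ (equivalently, of the background metric at which we linearize, using the standing assumption that $1\in F^{-1}\left((0,0)\right)$ so that $\sigma_k^g\equiv 0$ and $H_k^g$ is constant on $M$).

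For the first entry, Lemma~\ref{lem:conformal_variation_interior} gives
\[
\left.\frac{d}{dt}\right|_{t=0}\sigma_k^{g_{1+tv}} \;=\; -2kv\,\sigma_k^g - \delta\bigl(T_{k-1}(\nabla v)\bigr),
\]
and the first term drops because $\sigma_k^g\equiv 0$. This yields the interior component $-\delta\bigl(T_{k-1}(\nabla v)\bigr)$.

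For the second entry, I first apply Lemma~\ref{lem:conformal_variation} to obtain the pointwise variation
\[
\left.\frac{d}{dt}\right|_{t=0}H_k^{g_{1+tv}} \;=\; -(2k-1)v\,H_k + T_{k-1}(\eta,\nabla v) - \odelta\bigl(S_{k-1}(\onabla v)\bigr).
\]
Then I differentiate the integral term of~\eqref{eqn:defn_F} under the integral sign; the crucial point is that $\dvol_{\iota^\ast g}=\dvol_h$ is the fixed background volume form, so this differentiation only acts on $H_k^{g_w}$. The boundary divergence $\odelta\bigl(S_{k-1}(\onabla v)\bigr)$ integrates to zero by Stokes' theorem, since $M$ is a closed manifold. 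Furthermore, $H_k$ is constant and $v$ satisfies $\oint_M v\,\dvol_h=0$ (this is the tangent-space constraint defining $T_1\sC^{j,\alpha}$, up to the constant rescaling between $\dvol_{h_1}$ and $\dvol_h$), so the term $-(2k-1)H_k\oint_M v\,\dvol_h$ also vanishes. Thus
\[
\left.\frac{d}{dt}\right|_{t=0}\oint_M H_k^{g_{1+tv}}\,\dvol_h \;=\; \oint_M T_{k-1}(\eta,\nabla v)\,\dvol_h.
\]

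Subtracting the two computed variations gives precisely the claimed expression for the second component of $DF(v)$. The computation is essentially bookkeeping once the two conformal variation formulas are in hand; there is no substantive obstacle beyond verifying that the integrated $\odelta$-term vanishes on the closed boundary $M=\partial X$ and that the pointwise $vH_k$-term drops out by the tangency constraint, both of which are immediate.
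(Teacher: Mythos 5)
Your proof is correct and takes essentially the same approach as the paper: linearize each component of $F$ using the two conformal-variation lemmas, drop the $\sigma_k$-term by the hypothesis $\sigma_k^g\equiv 0$, observe that the $\odelta$-divergence integrates to zero on the closed boundary, and use the constancy of $H_k$ together with $\oint v=0$ to kill the remaining integrated term. (You are, if anything, slightly more careful than the paper in citing Lemma~\ref{lem:conformal_variation_interior} rather than Lemma~\ref{lem:conformal_variation} for the interior $\sigma_k$-variation.)
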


\begin{proof}
 Let $v\in T_1\sC^{j,\alpha}$ and let $t\mapsto w_t$ be a smooth path in $\sC^{j,\alpha}$ such that $w_0=1$ and $\frac{\partial w_t}{\partial t}\rv_{t=0}=v$.  Denote $g_t:=g_{w_t}$.  Since $1\in F^{-1}\left((0,0)\right)$, we immediately conclude from Lemma~\ref{lem:conformal_variation} that
 \begin{align*}
  \left.\frac{\partial}{\partial t}\right|_{t=0}\sigma_k^{g_t} & = -\delta\left(T_{k-1}(\nabla v)\right), \\
  \left.\frac{\partial}{\partial t}\right|_{t=0}H_k^{g_t} & = -(2k-1)H_kv + T_{k-1}(\eta,\nabla v) - \odelta\left(S_{k-1}(\nabla v)\right) .
 \end{align*}
 Lemma~\ref{lem:conformal_variation} and integration by parts also imply that
 \[ \left.\frac{d}{dt}\right|_{t=0}\oint_M H_k^{g_t}\dvol_{h} = \oint_M \bigl((1-2k)H_k v + T_{k-1}(\eta,\nabla v)\bigr)\,\dvol_{h} . \]
 Since $H_k$ is constant and $\oint v=0$, the first summand above integrates to zero.  Combining these two displays with the definition of $F$ yields the desired conclusion.
\end{proof}

For our purposes, it is more convenient to regard $F$ as a function $\mF$ defined only on the boundary $M$.  This is done via the following definition, which makes sense as a consequence of Proposition~\ref{prop:mV_banach}.

\begin{defn}
 Let $(X^{n+1},g)$ be a compact Riemannian manifold with boundary $(M^n,h)$ and let $k\in\bN$.  Suppose additionally that $1\in\sV_k^{j,\alpha}$.  The \emph{$\mH_k$-curvature of $(M,h)$} is
 \[ \mH_k^h := H_k^{\cg_1}, \]
 where $\cg_1:=e^{2\cw}g$ is determined by the extension $\cw$ of $w=1$ as in Proposition~\ref{prop:mV_banach}.
\end{defn}

Note that the $\mH_k$-curvature depends only on the conformal class $[g]$ and the choice of boundary metric $h$.  Note also that $\mH_1^{h}$ is precisely the fractional $Q$-curvature of order $1$; see~\cite{GuillarmouGuillope2007}.

By counting derivatives, we see that for $j\geq4$, the function $\mF^h\colon\sV_k^{j,\alpha}\to\sT_1^{j-2,\alpha}$ defined by
\begin{equation}
 \label{eqn:defn_mF}
 \mF^h(w) := \mH_k^{h_w} - \oint_M \mH_k^{h_w}\,\dvol_h
\end{equation}
is well-defined, where $h_w:=e^{2w}h$. Recall the definition of 
$\sT_w^{j,\alpha}$ is given in \eqref{eqn:mT_defn}.
We emphasize that the integration is taken with respect to the background metric $h$, and recall that we assume $\Vol_h(M)=1$.  Note that $\mF^h(w)=\pi_2F^g(\cw)$ for all $w\in\sV_k^{j,\alpha}$, where $\pi_2$ is the projection onto the second factor.  We omit the superscript on $\mF$ when the background metric $h$ is clear by context.

If $w=1\in\sV_k^{j,\alpha}$, then $\mH_k^{h_1}$ is constant if and only if $1\in\mF^{-1}(0)$.  The linearization of $\mF$ at $1\in\mF^{-1}(0)$ is readily computed using Proposition~\ref{prop:interior_dF}.

\begin{cor}
 \label{cor:boundary_dF}
 Let $(M^n,h)$ be the boundary of a compact Riemannian manifold $(X^{n+1},g)$; i.e.\ $M=\partial X$ and $h=\iota^\ast g$.  Define $\mF\colon\sV_k^{j,\alpha}\to \sT_1^{j-2,\alpha}$ by~\eqref{eqn:defn_mF}.  Suppose $1\in\mF^{-1}(0)$.  Then the linearization $D\mF\colon T_1\sV_k^{j,\alpha}\to\sT_1^{j-2,\alpha}$ of $\mF$ at $w=1$ is given by
 \begin{equation}
  \label{eqn:boundary_dF}
  D\mF(\phi) = T_{k-1}(\eta,\nabla v_\phi) - \odelta\left(S_{k-1}(\onabla\phi)\right) - (2k-1)\mH_k\phi,
 \end{equation}
 where $v_\phi$ is the solution of~\eqref{eqn:linearized_extension} and all geometric quantities are computed with respect to $h$ and the canonical extension $\cg$.
\end{cor}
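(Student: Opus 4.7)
The plan is to write $\mF$ as the second component of $F$ precomposed with the nonlinear extension map $\mE\colon w\mapsto\cw$ from Proposition~\ref{prop:mV_banach}, and then apply the chain rule together with Proposition~\ref{prop:interior_dF}. Directly from the definitions, $\mF^h(w) = \pi_2\bigl(F^g(\mE(w))\bigr)$, where $\pi_2$ denotes projection onto the second factor, so
\[
D\mF(\phi) = \pi_2\bigl(DF(D\mE(\phi))\bigr).
\]

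The next step will be to identify $D\mE(\phi)$ with the solution $v_\phi$ of~\eqref{eqn:linearized_extension}. I would take a smooth path $w_t$ in $\sV_k^{j,\alpha}$ with $w_0=1$ and $\dot{w}_0=\phi$, and differentiate the constraint $\sigma_k^{g_{\mE(w_t)}}\equiv 0$ at $t=0$. Writing $g_{\mE(w_t)} = e^{2(\mE(w_t)-\mE(1))}\cg_1$ so that the path is a conformal variation of $\cg_1$ with initial velocity $V := D\mE(\phi)$, Lemma~\ref{lem:conformal_variation_interior} applied in the background metric $\cg_1$, together with $\sigma_k^{\cg_1}\equiv 0$, yields
\[
\delta\bigl(T_{k-1}(\nabla V)\bigr) = 0 \ \text{in $X$}, \qquad V\rv_M = \phi,
\]
where all geometric quantities are computed with respect to $\cg_1$. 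Since $T_{k-1}^{\cg_1}>0$ by definition of $\sV_k^{j,\alpha}$, Lemma~\ref{lem:embedding} gives $V=v_\phi$.

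Substituting $v=v_\phi$ into Proposition~\ref{prop:interior_dF}, the interior component $-\delta(T_{k-1}(\nabla v_\phi))$ vanishes by construction, while the boundary component simplifies using $v_\phi\rv_M = \phi$ (hence $\onabla v_\phi\rv_M = \onabla\phi$) and the identity $H_k^{\cg_1}\rv_M = \mH_k$, which is constant because $1\in\mF^{-1}(0)$. The remaining mean-value term vanishes by the divergence theorem applied to the vector field $T_{k-1}(\nabla v_\phi)$:
\[
\oint_M T_{k-1}(\eta,\nabla v_\phi)\,\dvol_h = \int_X \delta\bigl(T_{k-1}(\nabla v_\phi)\bigr)\,\dvol_{\cg_1} = 0.
\]
Collecting these reductions produces~\eqref{eqn:boundary_dF}.

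There is no serious obstacle here; the one substantive point is the identification of the linearized nonlinear extension with the linear Dirichlet problem~\eqref{eqn:linearized_extension}, which is exactly the computation already carried out implicitly in the proof of Proposition~\ref{prop:mV_banach} when the implicit function theorem was invoked with the operator $V\mapsto\delta(T_{k-1}(\nabla V))$. Beyond this, the corollary is a chain-rule bookkeeping of Proposition~\ref{prop:interior_dF}.
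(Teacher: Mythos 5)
Your proposal is correct and follows essentially the same route as the paper: differentiate a path $w_t$ in $\sV_k^{j,\alpha}$, show that the derivative of the canonical extension $\cw_t$ solves the linearized Dirichlet problem~\eqref{eqn:linearized_extension} (hence equals $v_\phi$), substitute into Proposition~\ref{prop:interior_dF}, and kill the mean-value term by the divergence theorem. The only superficial difference is that you phrase the computation as an explicit chain rule $D\mF = \pi_2\circ DF\circ D\mE$ rather than differentiating along a path directly, but the content is identical; you also correctly cite Lemma~\ref{lem:conformal_variation_interior} (rather than Lemma~\ref{lem:conformal_variation}) for the interior linearization, which is the lemma actually needed for that step.
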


\begin{proof}
 Let $\phi\in T_w\sV_k^{j,\alpha}$ and let $t\mapsto w_t$ be a smooth path in $\sV_k^{j,\alpha}$ such that $w_0=1$ and $\left.\frac{\partial}{\partial t}\right|_{t=0}w_t=\phi$.  Let $\cw_t$ denote the canonical extension~\eqref{eqn:metric_extension_interior} of $w_t$ and set $v:=\left.\frac{\partial}{\partial t}\right|_{t=0}\cw_t$.  Since $\sigma_k^{\cg_{w_t}}\equiv 0$ for all $t$, differentiating at $t=0$ and applying Lemma~\ref{lem:conformal_variation} implies that $v$ solves~\eqref{eqn:linearized_extension}.  As a solution of~\eqref{eqn:linearized_extension}, we conclude that
 \[ \oint_M T_{k-1}(\eta,\nabla v)\,\dvol = 0 . \]
 Combining these observations with Proposition~\ref{prop:interior_dF} yields
 \[ DF(v) = \left(0, T_{k-1}(\eta,\nabla v) - \odelta\left(S_{k-1}(\onabla\phi)\right) - (2k-1)H_k\phi\right) . \]
 The observation that $F(\cw_t)=\left(0,\mF(w_t)\right)$ then yields~\eqref{eqn:boundary_dF}.
\end{proof}

The $\mH_k$-curvature --- or equivalently, the pair $(\sigma_k;H_k)$ --- is variational in a conformal class if and only if the linearization $D\mF_w$ is formally self-adjoint with respect to the $L^2$-pairing induced by $h_w$ for every $w\in\sV_{k}:=\cap_j\sV_k^{j,\alpha}$ (cf.\ \cite{BransonGover2008}).  Here we only need to know that $D\mF_w$ is formally self-adjoint at a critical point $w\in\mF^{-1}(0)$ in the cases when $(\sigma_k;H_k)$ is variational.  This is an easy consequence of Lemma~\ref{lem:conformal_variation} and Corollary~\ref{cor:boundary_dF}.

\begin{prop}
 \label{prop:fsae}
 Let $(M^n,h)$ be the boundary of a compact Riemannian manifold $(X^{n+1},g)$ and let $\mF$ be as in~\eqref{eqn:defn_mF}.  Suppose $1\in\mF^{-1}(0)$.  Then $D\mF$ at $w=1$ is formally self-adjoint with respect to the $L^2(\dvol_{h})$-pairing; i.e.
 \[ \oint_M \phi\,D\mF(\psi)\,\dvol_{h} = \oint_M \psi\,D\mF(\phi)\,\dvol_{h} \]
 for all $\phi,\psi\in C^\infty(M)$.
\end{prop}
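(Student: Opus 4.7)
The plan is to substitute the explicit formula for $D\mathcal{F}$ from Corollary~\ref{cor:boundary_dF} into the pairing $\oint_M \phi\,D\mathcal{F}(\psi)\,\dvol_h$ and verify symmetry term-by-term. The expression
\[ D\mathcal{F}(\psi) = T_{k-1}(\eta,\nabla v_\psi) - \overline{\delta}\left(S_{k-1}(\overline{\nabla}\psi)\right) - (2k-1)\mathcal{H}_k\psi \]
splits naturally into three pieces, and I would handle them independently.

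The pointwise term $-(2k-1)\mathcal{H}_k\phi\psi$ is manifestly symmetric in $\phi$ and $\psi$. For the tangential divergence term, I would use that $M$ is closed and that $S_{k-1}$ is a section of $S^2T^\ast M$ (being a Newton-type tensor built from the symmetric tensors $\iota^\ast P$ and $A$); integration by parts on $M$ then gives
\[ -\oint_M \phi\,\overline{\delta}\left(S_{k-1}(\overline{\nabla}\psi)\right)\dvol_h = \oint_M S_{k-1}(\overline{\nabla}\phi,\overline{\nabla}\psi)\,\dvol_h , \]
which is symmetric.

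The nontrivial piece is the nonlocal Dirichlet-to-Neumann term $T_{k-1}(\eta,\nabla v_\psi)$. Here I would exploit that the extensions $v_\phi, v_\psi$ of $\phi, \psi$ provided by Lemma~\ref{lem:embedding} solve $\delta(T_{k-1}(\nabla v))=0$ in $X$ with Dirichlet data $\phi$ and $\psi$ respectively, all with respect to the extended metric $\widetilde{g}$. Applying the divergence theorem on $(X,\widetilde{g})$ to the vector field $v_\phi\,T_{k-1}(\nabla v_\psi)$ and using $\delta(T_{k-1}(\nabla v_\psi))=0$ and $T_{k-1}$ being symmetric yields
\[ \oint_M \phi\,T_{k-1}(\eta,\nabla v_\psi)\,\dvol_h = \int_X T_{k-1}(\nabla v_\phi,\nabla v_\psi)\,\dvol_{\widetilde{g}}, \]
whose right-hand side is symmetric in $\phi$ and $\psi$. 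Adding the three contributions gives the claim.

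There is no serious obstacle: everything reduces to integration by parts combined with the symmetry of the Newton tensors $T_{k-1}$ and $S_{k-1}$. The only point requiring a little care is using the fact that $v_\phi$ is \emph{precisely} the linearized extension that appeared in Corollary~\ref{cor:boundary_dF}, so that the interior equation $\delta(T_{k-1}(\nabla v_\phi))=0$ is available to kill the bulk term in the divergence theorem. One should also note that $\phi,\psi$ need not be mean-zero here, since the constant adjustment by $\oint_M H_k^{g_w}\,\dvol_h$ in the definition of $\mathcal{F}$ has already been differentiated away in Corollary~\ref{cor:boundary_dF} (and in any case would contribute a symmetric constant pairing).
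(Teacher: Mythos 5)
Your proposal is correct and follows essentially the same route as the paper: integration by parts on $M$ for the $S_{k-1}$ term, the divergence theorem in $X$ together with $\delta(T_{k-1}(\nabla v))=0$ for the Dirichlet-to-Neumann term, and observing that the zeroth-order $\mathcal{H}_k$ term is manifestly symmetric. The paper just records the combined symmetric expression for $\oint_M \phi\,D\mathcal{F}(\psi)\,\dvol_h$ in one display rather than three, but the three ingredients and the order in which they are applied are identical.
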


\begin{proof}
 Given $\phi,\psi\in C^\infty(M)$, let $v_\phi$ and $v_\psi$ be extensions as in~\eqref{eqn:linearized_extension}.  The divergence theorem then implies that
 \[ \oint_M \phi\,T_{k-1}(\eta,\nabla v_\psi)\,\dvol_{h} = \int_X T_{k-1}(\nabla v_\phi,\nabla v_\psi)\,\dvol_{g} . \]
 It readily follows from Corollary~\ref{cor:boundary_dF} that
 \begin{multline*}
  \oint_M \phi\,D\mF(\psi)\,\dvol_{h} = \int_X T_{k-1}(\nabla v_\phi,\nabla v_\psi)\,\dvol_{g} \\ + \oint_M \left( S_{k-1}(\onabla\phi,\onabla\psi) - (2k-1)\mH_k\phi\psi\right)\,\dvol_{h} .
 \end{multline*}
 The right-hand side of the above display is clearly symmetric in $\phi,\psi$, which yields the desired conclusion.
\end{proof}

\section{The Fredholm property}
\label{sec:fredholm}

The goal of this section is to prove that the linearization of $\mF$ is a Fredholm operator.  Since the property of being a Fredholm operator is sensitive to both the domain and codomain, we need to handle separately the cases $k=1$ and $k\geq2$.  We begin by considering the case $k\geq2$.

\begin{thm}
 \label{thm:fredholm}
 Fix integers $k\geq2$ and $j\geq4$ and a parameter $\alpha\in(0,1)$.  Let $(X^{n+1},g)$ be a compact Riemannian manifold with boundary $M^n:=\partial X$.  Suppose that $1\in\sV_{k}^{j,\alpha}$ and that the extension $\cw$ of $w=1$ by~\eqref{eqn:metric_extension_interior} has $S_{k-1}^{\cg_1}>0$.  Then $D\mF\colon \sT_1^{j,\alpha}\to\sT_1^{j-2,\alpha}$ is Fredholm of index zero.
\end{thm}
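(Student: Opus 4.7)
The plan is to realize $D\mF$ as a compact perturbation of a formally self-adjoint, uniformly elliptic second-order operator on the closed manifold $M$.  Using the formula from Corollary~\ref{cor:boundary_dF}, decompose $D\mF = L + K$, where
\[
 L\phi := -\odelta\left(S_{k-1}(\onabla\phi)\right) - (2k-1)\mH_k\phi, \qquad K\phi := T_{k-1}(\eta,\nabla v_\phi),
\]
with $v_\phi$ the extension from Lemma~\ref{lem:embedding} and all tensors evaluated with respect to $\cg_1$.  The principal symbol of $L$ at $\xi\in T^\ast M\setminus\{0\}$ is $S_{k-1}(\xi,\xi)$, which is positive by the hypothesis $S_{k-1}^{\cg_1}>0$, so $L$ is a uniformly elliptic second-order operator on the closed manifold $M$ with smooth coefficients.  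Standard Schauder theory then implies that $L\colon C^{j,\alpha}(M)\to C^{j-2,\alpha}(M)$ is Fredholm of index zero.

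For the compactness of $K$, the hypothesis $T_{k-1}^{\cg_1}>0$ makes~\eqref{eqn:linearized_extension} a uniformly elliptic Dirichlet problem with smooth coefficients, so one obtains a bounded linear solution map $\phi\mapsto v_\phi$ from $C^{j,\alpha}(M)$ into $C^{j,\alpha}(X)$ (sharpening the basic statement of Lemma~\ref{lem:embedding}).  Applying $\nabla$ and restricting to $M$ gives a bounded linear map $C^{j,\alpha}(M)\to C^{j-1,\alpha}(M)$; postcomposition with the compact embedding $C^{j-1,\alpha}(M)\hookrightarrow C^{j-2,\alpha}(M)$ then shows that $K$ is compact.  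Since the Fredholm index is stable under compact perturbations, $D\mF\colon C^{j,\alpha}(M)\to C^{j-2,\alpha}(M)$ is Fredholm of index zero.

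To descend to the codimension-one subspaces, use the splittings $C^{j,\alpha}(M)=\bR\oplus\sT_1^{j,\alpha}$ and $C^{j-2,\alpha}(M)=\bR\oplus\sT_1^{j-2,\alpha}$, where $\bR$ denotes the space of constants.  A direct computation (noting that the extension of a constant is the same constant) gives $D\mF(c)=-(2k-1)\mH_k c$ for $c\in\bR$, while the divergence theorem applied to $v_\phi$, combined with $\oint \phi\,\dvol_h=0$ for $\phi\in\sT_1^{j,\alpha}$, verifies that $D\mF(\sT_1^{j,\alpha})\subset\sT_1^{j-2,\alpha}$.  Hence $D\mF$ is block-diagonal with respect to this splitting, and since the constant block is either an isomorphism (when $\mH_k\ne 0$) or the zero map (when $\mH_k=0$) between one-dimensional spaces, its index is zero; additivity of the Fredholm index then forces $D\mF\colon\sT_1^{j,\alpha}\to\sT_1^{j-2,\alpha}$ to be Fredholm of index zero as well.

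The main technical point is the sharp Schauder estimate for the linearized Dirichlet problem~\eqref{eqn:linearized_extension}; this is classical once one knows the equation is uniformly elliptic with smooth coefficients, both of which are guaranteed by $T_{k-1}^{\cg_1}>0$ and the smoothness of $g$.  The restriction $k\ge 2$ is essential to this particular argument because the second-order character of $L$ comes from $S_{k-1}$: in the case $k=1$ one has $S_0\equiv 0$, so $L$ is only zeroth order and $D\mF$ would instead have to be analyzed as a first-order Dirichlet-to-Neumann pseudodifferential operator in the spirit of Remark~\ref{rk:mapping_properties}.
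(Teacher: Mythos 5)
Your argument is correct, but it takes a genuinely different and arguably more elementary route than the paper's. The paper treats $D\mF$ as arising from the full interior boundary value problem~\eqref{eqn:case2} and invokes the Agmon--Douglis--Nirenberg Schauder theory (checking the Complementing Condition via $S_{k-1}>0$) to obtain the a priori estimate of Proposition~\ref{prop:schauder}; it then establishes finite-dimensional kernel, a Poincar\'e-type estimate on the orthogonal complement, closed range, and finally identifies the cokernel with the kernel via the formal self-adjointness of Proposition~\ref{prop:fsae}. You instead split $D\mF = L + K$, recognizing $L = -\odelta(S_{k-1}\onabla\,\cdot\,) - (2k-1)\mH_k$ as a genuinely local, formally self-adjoint, uniformly elliptic second-order operator on the \emph{closed} manifold $M$ --- hence Fredholm of index zero by classical Schauder theory on closed manifolds --- and you show the Dirichlet-to-Neumann piece $K\phi = T_{k-1}(\eta,\nabla v_\phi)$ is compact because the solution map $\phi\mapsto v_\phi$ gains a derivative (bounded $C^{j,\alpha}(M)\to C^{j,\alpha}(X)$ by interior Dirichlet Schauder theory with $T_{k-1}^{\cg_1}>0$, hence $K$ is bounded into $C^{j-1,\alpha}(M)$, which embeds compactly into $C^{j-2,\alpha}(M)$). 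The descent to $\sT_1^{j,\alpha}$ via the block splitting $C^{j,\alpha}(M)=\bR\oplus\sT_1^{j,\alpha}$ is also fine: the extension of a constant is the same constant since $T_{k-1}>0$ gives uniqueness in~\eqref{eqn:linearized_extension}, so $D\mF$ does preserve the splitting, and additivity of the index finishes the argument. What your approach buys is the avoidance of the ADN machinery for general elliptic BVPs and the explicit conceptual point that the nonlocal Dirichlet-to-Neumann contribution is of strictly lower order than the tangential operator $L$; what the paper's approach buys is a Schauder estimate~\eqref{eqn:schauder} for $D\mF$ itself, which they reuse in the proof that the index is finite (Lemma~\ref{lem:finite_index}) and in the convergence argument for the Poincar\'e inequality.
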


The main idea of the proof of Theorem~\ref{thm:fredholm} is that the assumption that $T_{k-1}^{\cg_1}$ and $S_{k-1}^{\cg_1}$ are positive definite ensures that $D\mF$ is elliptic.  By Proposition~\ref{prop:fsae}, $D\mF$ is also formally self-adjoint.  Together these facts imply that $D\mF$ is Fredholm of index zero.  The subtlety here is that $D\mF$ is a nonlocal operator.  Since there does not seem to be a direct reference which guarantees that $D\mF$ is Fredholm, we sketch the proof.  The first step is to use ellipticity to deduce Schauder estimates for $D\mF$.

\begin{prop}
 \label{prop:schauder}
 Fix integers $k\geq2$ and $j\geq4$ and a parameter $\alpha\in(0,1)$.  Let $(X^{n+1},g)$ be a compact Riemannian manifold with boundary $M^n:=\partial X$.  Suppose that $1\in\sV_{k}^{j,\alpha}$ and that the extension $\cw$ of $w=1$ by~\eqref{eqn:metric_extension_interior} has $S_{k-1}^{\cg_1}>0$.  Then there is a uniform constant $C>0$ such that
 \begin{equation}
  \label{eqn:schauder}
  \lV \phi\rV_{C^{j,\alpha}(M)} \leq C\left( \lV D\mF(\phi)\rV_{C^{j-2,\alpha}(M)} + \lV \phi\rV_{C^{0,\alpha}(M)} \right)
 \end{equation}
 for all $\phi\in\sT_1^{j,\alpha}$.
\end{prop}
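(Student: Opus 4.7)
The plan is to recognize the equation $D\mF(\phi)=\Phi$ as the boundary trace of an elliptic boundary value problem of Ventcel type on $X$, apply the Schauder estimate of Agmon, Douglis and Nirenberg~\cite{AgmonDouglisNirenberg1959}, and transfer the interior regularity to the boundary.  Given $\phi\in\sT_1^{j,\alpha}$, combining Corollary~\ref{cor:boundary_dF} with~\eqref{eqn:linearized_extension} shows that the extension $v=v_\phi$ is the unique solution of
\begin{align*}
 -\delta\bigl(T_{k-1}(\nabla v)\bigr) &= 0 \quad\text{in } \Int(X), \\
 T_{k-1}(\eta,\nabla v) - \odelta\bigl(S_{k-1}(\onabla v)\bigr) - (2k-1)\mH_k v &= D\mF(\phi) \quad\text{on } M,
\end{align*}
where all geometric quantities are computed with respect to $\cg_1$.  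I will view this as a boundary value problem for $v$ on $X$ and recover $\phi$ by restriction.

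Since $1\in\sV_k^{j,\alpha}$ the hypothesis $T_{k-1}^{\cg_1}>0$ holds throughout $X$, which makes the interior operator linear, uniformly elliptic of second order with $C^{j-2,\alpha}$ coefficients.  Assigning ADN weights $s=0$ to the interior equation, $t=2$ to the unknown $v$, and $r=0$ to the boundary operator, both the interior and boundary operators have order bounded by $s+t=2$ and $r+t=2$ respectively.  The principal boundary symbol at a boundary point with nonzero tangential dual covector $\xi'$ is $\langle S_{k-1}\xi',\xi'\rangle$ (the first-order normal term $T_{k-1}(\eta,\nabla v)$ and the zeroth-order term are both of lower order), which is strictly positive by the hypothesis $S_{k-1}^{\cg_1}>0$.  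The interior principal symbol $\langle T_{k-1}\xi,\xi\rangle$ admits a unique exponentially decaying inward-normal mode $e^{-\lambda(\xi')t}$ for each nonzero $\xi'$, and the principal boundary symbol evaluated on this mode is the nonzero scalar $\langle S_{k-1}\xi',\xi'\rangle$.  This verifies the Lopatinski-Shapiro complementing condition, so the ADN Schauder estimate yields a constant $C>0$, independent of $\phi$, with
\[ \lV v_\phi\rV_{C^{j,\alpha}(X)} \leq C\bigl(\lV D\mF(\phi)\rV_{C^{j-2,\alpha}(M)} + \lV v_\phi\rV_{C^0(X)}\bigr). \]

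Finally, the maximum principle for the linear Dirichlet problem~\eqref{eqn:linearized_extension} gives $\lV v_\phi\rV_{C^0(X)}\leq \lV\phi\rV_{C^0(M)}\leq \lV\phi\rV_{C^{0,\alpha}(M)}$, while the trace restriction yields $\lV\phi\rV_{C^{j,\alpha}(M)}\leq \lV v_\phi\rV_{C^{j,\alpha}(X)}$; combining these with the previous display produces~\eqref{eqn:schauder}.  The main technical obstacle is the verification of the complementing condition for a boundary operator carrying second-order tangential terms, since Ventcel-type BVPs are not covered by standard references such as Gilbarg and Trudinger~\cite{GilbargTrudinger2001} and one must invoke the original formulation of~\cite{AgmonDouglisNirenberg1959}; the two positivity hypotheses $T_{k-1}^{\cg_1}>0$ and $S_{k-1}^{\cg_1}>0$ are precisely what force the interior and boundary parts of the system, respectively, to be elliptic.
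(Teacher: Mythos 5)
Your proposal is correct and follows essentially the same route as the paper: recast $D\mF(\phi)=\Phi$ as the oblique boundary value problem~\eqref{eqn:case2} for the extension $v_\phi$, invoke the ADN Schauder estimate after verifying the Complementing Condition via $S_{k-1}>0$, and then control the low-order norm of $v_\phi$ by that of $\phi$. The only difference is in the details: you spell out the Lopatinski--Shapiro verification with explicit ADN weights and principal symbols (which the paper leaves implicit), and you bound the low-order term via the maximum principle, where the paper instead cites Lemma~\ref{lem:embedding} for the $C^{0,\alpha}$ estimate; both are valid ways to close the argument.
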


\begin{proof}
 In what follows, all geometric quantities in $X$ are determined by the smooth metric $\cg_1:= e^{2\cw}g$ and all geometric quantities on $M$ are determined by the metric $h$.  Instead of considering the operator $D\mF$ as defined in Corollary~\ref{cor:boundary_dF}, we consider the equivalent interior operator of Proposition~\ref{prop:interior_dF} by using the extension~\eqref{eqn:linearized_extension} from $\phi\in C^{j,\alpha}(M)$ to $v_\phi\in C^{j,\alpha}(X)$.  That is, $v_\phi$ solves
 \begin{equation}
  \label{eqn:case2}
  \begin{cases}
   \delta\left(T_{k-1}(\nabla v)\right) = 0, & \text{in $X$}, \\
   B_k(v) = D\mF(\phi), & \text{on $M$},
  \end{cases}
 \end{equation}
 where
 \[ B_k(v) := T_{k-1}(\eta,\nabla v) - \odelta\left(S_{k-1}(\onabla\iota^\ast v)\right) - (2k-1)H_k\iota^\ast v . \]
 Since $S_{k-1}>0$, we see that~\eqref{eqn:case2} satisfies the Complementing Condition of Agmon, Douglis and Nirenberg~\cite{AgmonDouglisNirenberg1959} (also known as the Lopatinskii--Shapiro conditions~\cite{Lopatinskii1953,Shapiro1953}).  The boundary Schauder estimate~\cite[Theorem~7.3]{AgmonDouglisNirenberg1959} of Agmon--Douglis--Nirenberg states that there is a uniform constant $C>0$ such that
 \begin{equation}
  \label{eqn:3.4}
  \lV v_\phi\rV_{C^{j,\alpha}(X)} \leq C\left( \lV D\mF(\phi)\rV_{C^{j-2,\alpha}(M)} + \lV v_\phi\rV_{C^{0,\alpha}(X)} \right) .
 \end{equation}
 Since $v_\phi$ is the extension of $\phi$ by \eqref{eqn:linearized_extension}, there is a uniform constant $C>0$ (cf.\ Lemma~\ref{lem:embedding}) such that
 \[ \lV v_\phi\rV_{C^{0,\alpha}(X)} \leq C\lV\phi\rV_{C^{0,\alpha}(M)}.\]
 Combining this with~\eqref{eqn:3.4} yields~\eqref{eqn:schauder}.
\end{proof}

The remaining steps in the proof of Theorem~\ref{thm:fredholm} are to use the Schauder estimate~\eqref{eqn:schauder} to show that $D\mF$ is a Fredholm operator and then use formal self-adjointness to conclude that the Fredholm index is zero.  We sketch these details below:

\begin{proof}[Proof of Theorem~\ref{thm:fredholm}]
 Let $B\subset C^{j,\alpha}(M)\cap\ker D\mF$ be the unit ball in $\ker D\mF$ with respect to the $C^{j,\alpha}(M)$-norm.  Since the embedding of $C^{j,\alpha}(M)$ into $C^{0,\alpha}(M)$ is compact, $B$ is precompact in $C^{0,\alpha}(M)$.  It then follows from~\eqref{eqn:schauder} that $B$ is precompact in $C^{j,\alpha}(M)$.  Therefore $\ker D\mF$ is finite dimensional.  Note, in fact, that all elements of $\ker D\mF$ are smooth.

 Let $\left(\ker D\mF\right)^\perp$ denote the orthogonal complement of $\ker D\mF$ with respect to the $L^2$-inner product.  We claim that there is a uniform constant $C>0$ such that
 \begin{equation}
  \label{eqn:poincare}
  \lV \phi\rV_{C^{0,\alpha}(M)} \leq C\lV D\mF(\phi)\rV_{C^{j-2,\alpha}(M)}
 \end{equation}
 for all $\phi\in \left(\ker D\mF\right)^\perp$.  If not, then there would be a sequence $(\phi_\ell)_\ell\subset\left(\ker D\mF\right)^\perp$ such that $\lV \phi_\ell\rV_{C^{0,\alpha}(M)}=1$ and $D\mF(\phi_\ell)\to0$ in $C^{j-2,\alpha}(M)$.  It follows
 from~\eqref{eqn:3.4} that $v_{\phi_\ell}$ is bounded in $C^{j,\alpha}(X)$. Thus, up to a subsequence, $\phi_\ell$ converges strongly in $C^{0,\alpha}(M)$ and thus in $L^2(M)$, say to $\phi$.  In particular, $\lV\phi\rV_{C^{0,\alpha}(M)}=1$, and thus 
 $\lV\phi\rV_{L^2(M)}\leq C$. Since $D\mF(\phi_\ell)\to0$, the limit $\phi$ is a weak solution of $D\mF(\phi)=0$.  It follows from elliptic estimates as in the proof of Proposition~\ref{prop:schauder} that $\phi$ is a strong solution of $D\mF(\phi)$; i.e.\ $\phi\in\ker D\mF$.  Therefore
 \[ \oint_M \phi^2 = \oint_M \left(\phi-\phi_\ell\right)\phi \leq \lV\phi-\phi_\ell\rV_{L^2(M)}\lV\phi\rV_{L^2(M)} . \]
 Hence $\phi=0$, a contradiction.

 It follows immediately from~\eqref{eqn:schauder} and~\eqref{eqn:poincare} that the image of $D\mF$ is closed.  Note now that
 \[ \left(\coker D\mF\right)^\ast = \left\{ \beta\in \left(C^{j-2,\alpha}(M)\right)^\ast \suchthat \beta\circ D\mF\equiv 0 \right\} . \]
 Let $K=\ker\left((D\mF)^\ast\colon C^\infty(M)\to C^\infty(M)\right)$ be the kernel of the formal adjoint of $D\mF$ and denote by $i\colon K\to\left(\coker D\mF\right)^\ast$ the $L^2$-embedding
 \[ i(v)(\phi) = \oint_M v\phi . \]
 As in the previous paragraph, elliptic regularity implies that if $\beta\in\left(\coker D\mF\right)^\ast$, then $\beta$ is smooth, and hence $\beta\in K$.  Thus $i$ is a bijection, and so we may identify $K\cong\left(\coker D\mF\right)^\ast$.  Since $D\mF$ is formally self-adjoint, we see that $K=\ker D\mF$.  By the first paragraph, this is finite-dimensional, and hence $D\mF$ is Fredholm.  Moreover,
 \[ \FredholmIndex D\mF = \dim\ker D\mF - \dim\coker D\mF = 0 . \qedhere \]
\end{proof}

We can also prove that the linearization of $\mF$ is Fredholm in the case $k=1$.  In this case, recall (cf.\ Remark~\ref{rk:mapping_properties}) that $\mF\colon\sV_1^{j,\alpha}\to\sT_1^{j-1,\alpha}$, that $T_0=g$, and that $S_0\equiv0$.  The proof that $D\mF$ is Fredholm in this case is similar to the proof of Theorem~\ref{thm:fredholm}, so we give only a brief sketch.  Again, the main ingredient is a Schauder estimate for $D\mF$.

\begin{prop}
 \label{prop:schauder_Sflat}
 Fix an integer $j\geq4$ and a parameter $\alpha\in(0,1)$.  Let $(X^{n+1},g)$ be a compact Riemannian manifold with boundary $M^n:=\partial X$.  Suppose that $1\in\sV_{1}^{j,\alpha}$.  Then there is a uniform constant $C>0$ such that
 \[ \lV \phi\rV_{C^{j,\alpha}(M)} \leq C\left( \lV D\mF(\phi)\rV_{C^{j-1,\alpha}(M)} + \lV \phi\rV_{C^{0,\alpha}(M)} \right) \]
 for all $\phi\in\sT_1^{j,\alpha}$.
\end{prop}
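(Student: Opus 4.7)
The strategy mirrors the proof of Proposition~\ref{prop:schauder}, but exploits the dramatic simplification that occurs when $k=1$. Indeed, in this case $T_0=g$ and $S_0\equiv 0$, so the extension problem~\eqref{eqn:linearized_extension} reduces to the Dirichlet problem for harmonic functions, and the formula from Corollary~\ref{cor:boundary_dF} specializes to
\[
 D\mF(\phi) = g(\eta,\nabla v_\phi) - \mH_1\phi,
\]
where $v_\phi$ is the harmonic extension of $\phi$ with respect to $\cg_1$. Thus $D\mF$ is essentially the Dirichlet-to-Neumann operator of $\cg_1$ modified by a zeroth-order term. The plan is to use this identification to convert the nonlocal estimate into a boundary Schauder estimate for a Robin-type problem and then apply Agmon--Douglis--Nirenberg.

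First, I will observe that for $\phi\in\sT_1^{j,\alpha}$, the harmonic extension $v_\phi\in C^{j,\alpha}(X)$ is equivalently characterized as the unique solution of the Robin-type boundary value problem
\[
 \begin{cases}
  \Delta v = 0, & \text{in $X$}, \\
  \partial_\eta v - \mH_1 v = D\mF(\phi), & \text{on $M$},
 \end{cases}
\]
which automatically satisfies $v\rv_M=\phi$ by the definition of $D\mF(\phi)$. This is a regular elliptic boundary value problem of Agmon--Douglis--Nirenberg type: the Lopatinskii--Shapiro complementing condition is standard for the Robin problem because the coefficient of $\partial_\eta v$ is nonzero. Next, I will invoke the boundary Schauder estimate~\cite[Theorem~7.3]{AgmonDouglisNirenberg1959}, noting that the boundary operator has order one, which yields the gain
\[
 \lV v_\phi\rV_{C^{j,\alpha}(X)} \leq C\left(\lV D\mF(\phi)\rV_{C^{j-1,\alpha}(M)} + \lV v_\phi\rV_{C^{0,\alpha}(X)}\right).
\]
Finally, I will combine this with the trace inequality $\lV\phi\rV_{C^{j,\alpha}(M)}\leq\lV v_\phi\rV_{C^{j,\alpha}(X)}$, which is immediate since $\phi=v_\phi\rv_M$, and with the standard bound $\lV v_\phi\rV_{C^{0,\alpha}(X)}\leq C\lV\phi\rV_{C^{0,\alpha}(M)}$ for the Dirichlet problem of a harmonic function (see, e.g., Lemma~\ref{lem:embedding} applied with $k=1$).

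The only step requiring care is the verification of the complementing condition. This is routine, however, since for the Laplacian with boundary operator $\partial_\eta - \mH_1\cdot$, one checks directly that the characteristic polynomial of $\Delta$ frozen at a boundary point factors as $(\tau-i|\xi|)(\tau+i|\xi|)$ while the symbol of the boundary operator is $i\tau$, whose restriction to the root $\tau=i|\xi|$ equals $-|\xi|$, which is nonzero for $\xi\ne 0$. With this in hand, all subsequent steps proceed exactly as described, and the desired Schauder estimate follows.
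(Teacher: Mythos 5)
Your proof is correct and follows essentially the same approach as the paper: for $k=1$ one has $T_0 = g$ and $S_0 \equiv 0$, so $v_\phi$ is the harmonic extension of $\phi$ with respect to $\cg_1$ and solves the Robin boundary value problem $\Delta v = 0$ in $X$, $\partial_\eta v - H_1\,\iota^\ast v = D\mF(\phi)$ on $M$; the boundary operator is first order, so Agmon--Douglis--Nirenberg gives the estimate with $\lV D\mF(\phi)\rV_{C^{j-1,\alpha}(M)}$, and one concludes via the trace bound and $\lV v_\phi\rV_{C^{0,\alpha}(X)}\leq C\lV\phi\rV_{C^{0,\alpha}(M)}$. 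The only difference is that you spell out the verification of the complementing condition, which the paper merely subsumes under ``standard Schauder estimates.''
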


\begin{proof}
 As in the second paragraph of the proof of Proposition~\ref{prop:schauder}, we consider a solution $v_\phi$ of~\eqref{eqn:case2} with
 \[ B_1(v) := \eta v - H_1\iota^\ast v . \]
 Standard Schauder estimates yield a uniform constant such that
 \[ \lV v_\phi\rV_{C^{j,\alpha}(X)} \leq C \left( \lV D\mF(\phi)\rV_{C^{j-1,\alpha}(M)} + \lV v_\phi\rV_{C^{0,\alpha}(X)}\right) . \]
 The conclusion readily follows.
\end{proof}

Mimicking the proof of Theorem~\ref{thm:fredholm}, but using Proposition~\ref{prop:schauder_Sflat} in place of Proposition~\ref{prop:schauder}, immediately yields the proof that $D\mF$ is Fredholm when $k=1$.

\begin{thm}
 \label{thm:fredholm_Sflat}
 Fix an integer $j\geq4$ and a parameter $\alpha\in(0,1)$.  Let $(X^{n+1},g)$ be a compact Riemannian manifold with boundary $M^n:=\partial X$.  Suppose that $1\in\sV_{1}^{j,\alpha}$.  Then $D\mF\colon\sT_1^{j,\alpha}\to\sT_1^{j-1,\alpha}$ is Fredholm of index zero.
\end{thm}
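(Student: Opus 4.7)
The plan is to mimic, essentially line-by-line, the proof of Theorem~\ref{thm:fredholm}, with Proposition~\ref{prop:schauder_Sflat} substituted for Proposition~\ref{prop:schauder}.  The only non-cosmetic difference is the regularity shift in the codomain: the boundary operator $B_1(v)=\eta v-H_1\iota^\ast v$ is only first-order (since $S_0\equiv 0$), so $D\mF$ now lowers regularity by one rather than two, and we must track this carefully.

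First I would show $\ker D\mF$ is finite-dimensional.  Given the unit ball $B\subset C^{j,\alpha}(M)\cap\ker D\mF$, the estimate of Proposition~\ref{prop:schauder_Sflat} yields
\[
\lV\phi\rV_{C^{j,\alpha}(M)}\leq C\lV\phi\rV_{C^{0,\alpha}(M)}
\]
for $\phi\in\ker D\mF$, so since $C^{j,\alpha}(M)\hookrightarrow C^{0,\alpha}(M)$ is compact, $B$ is precompact in $C^{j,\alpha}(M)$ and therefore $\ker D\mF$ is finite-dimensional.  Elliptic regularity also ensures elements of $\ker D\mF$ are smooth.

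Next I would prove the Poincar\'e-type estimate
\[
\lV\phi\rV_{C^{0,\alpha}(M)}\leq C\lV D\mF(\phi)\rV_{C^{j-1,\alpha}(M)}
\]
for all $\phi\in(\ker D\mF)^\perp$ (orthogonal with respect to the $L^2(\dvol_h)$ pairing on $\sT_1^{j,\alpha}$).  The argument is by contradiction: take $(\phi_\ell)$ in the orthogonal complement with $\lV\phi_\ell\rV_{C^{0,\alpha}}=1$ and $D\mF(\phi_\ell)\to 0$ in $C^{j-1,\alpha}(M)$, pass to a subsequence converging in $C^{0,\alpha}(M)$ (hence in $L^2$) to some $\phi$ with $\lV\phi\rV_{C^{0,\alpha}}=1$, and use elliptic regularity (Proposition~\ref{prop:schauder_Sflat} applied to the limit) to see $\phi\in\ker D\mF$.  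Since $\phi$ is also in the $L^2$-closure of $(\ker D\mF)^\perp$, it must vanish, contradicting $\lV\phi\rV_{C^{0,\alpha}}=1$.  Combining this with Proposition~\ref{prop:schauder_Sflat} yields closedness of $\im D\mF$.

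Finally I would identify the cokernel.  Any $\beta\in(\coker D\mF)^\ast\subset\bigl(C^{j-1,\alpha}(M)\bigr)^\ast$ satisfies $\beta\circ D\mF\equiv 0$ and, by the same bootstrap argument using Proposition~\ref{prop:schauder_Sflat}, is represented by a smooth function lying in the kernel of the formal adjoint of $D\mF$.  But Proposition~\ref{prop:fsae} (which makes no assumption on $k$) gives $(D\mF)^\ast=D\mF$, so $\coker D\mF\cong\ker D\mF$ and hence $\FredholmIndex D\mF=0$.  The one step to watch carefully is that the $L^2$-embedding of $K=\ker(D\mF)^\ast$ into $(\coker D\mF)^\ast$ is a bijection onto its image in the $C^{j-1,\alpha}$-dual rather than the $C^{j-2,\alpha}$-dual, but this does not affect the argument since smoothness of the representative is all that is needed.
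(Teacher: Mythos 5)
Your proposal is correct and follows exactly the approach the paper intends: the paper's own ``proof'' is a one-sentence instruction to repeat the argument for Theorem~\ref{thm:fredholm} with Proposition~\ref{prop:schauder_Sflat} in place of Proposition~\ref{prop:schauder}, and you have carried that out faithfully, correctly tracking the single regularity shift from $C^{j-2,\alpha}$ to $C^{j-1,\alpha}$ in the codomain throughout the finite-dimensionality, closed-range, and cokernel steps.
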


\section{The Bifurcation Theorem}
\label{sec:bifurcation}

We are now prepared to prove our main result, Theorem~\ref{thm:bifurcation}.  This theorem gives sufficient conditions to conclude that a family of solutions to~\eqref{eqn:constant_Hk_problem} has a bifurcation instant.

\begin{defn}
 \label{defn:bifurcation_instant}
 Let $X^{n+1}$ be a compact manifold with nonempty boundary $M^n:=\partial X$.  Fix integers $k\in\bN$ and $j\geq4$, and a parameter $\alpha\in(0,1)$.  Let $\{g_s\}_{s\in[a,b]}$ be a smooth one-parameter family of $C^{j,\alpha}$-metrics on $X$ such that $\sigma_k^{g_s}\equiv0$ and with respect to which $M$ has unit volume and constant $H_k$-curvature.  A \emph{bifurcation instant for the family $\{g_s\}$} is a point $s_\ast\in(a,b)$ such that there exist sequences $(s_\ell)_\ell\subset[a,b]$ and $(w_\ell)_\ell\subset\sV_{k}^{j,\alpha}$ such that
 \begin{enumerate}
  \item $\sigma_k^{g_\ell}\equiv 0$ and $H_k^{g_\ell}$ is constant, where $g_\ell:=e^{2w_\ell}g_{s_\ell}$,
  \item $w_\ell\not\equiv0$ for all $\ell\in\bN$,
  \item $s_\ell\to s_\ast$ as $\ell\to\infty$,
  \item $w_\ell\to0$ in $\sV_{k}^{j,\alpha}$ with respect to $C^{j,\alpha}$-norm, as $\ell\to\infty$.
 \end{enumerate}
\end{defn}

In particular, if $s_\ast$ is a bifurcation instant for a family $\{g_s\}$ of metrics as in Definition~\ref{defn:bifurcation_instant}, then for each $\ell\in\bN$, there are nonhomothetic metrics in each conformal class $[g_{s_\ell}]$ which lie in $\overline{\Gamma_k^+}$, have $\sigma_k\equiv0$, and have $H_k$ constant. This yields multiple nonisometric solutions of~\eqref{eqn:constant_Hk_problem} in each conformal class $[g_{s_\ell}]$; see Corollary~\ref{cor:bifurcation_genl_volume} below for further details.

Our result relies on the fiber bundle analogue, proven by de Lima, Piccione and Zedda~\cite{deLimaPiccioneZedda2012}, of the general bifurcation theorem of Smoller and Wasserman~\cite{SmollerWasserman1990}.  In order to apply their results, we need to study the index of the Jacobi operator $D\mF$ of a solution $(X^{n+1},g)$ of~\eqref{eqn:constant_Hk_problem}.  This terminology reflects the close relationship between the second variation of the functional $G_k$ given by~\eqref{eqn:mFk_noncritical_dimension} at a solution $g$ of~\eqref{eqn:constant_Hk_problem} and the linearization $D\mF$ of the functional $\mF$ at $w=1$.

\begin{defn}
 \label{defn:index_DF}
 Fix $k\in\bN$.  Let $(X^{n+1},g)$ be a compact Riemannian manifold with boundary $M^n:=\partial X$ such that $\sigma_k\equiv0$ and $H_k$ is constant; if $k\geq3$, assume additionally that $g$ is locally conformally flat.  Suppose that either
 \begin{enumerate}
  \item $T_{k-1}>0$ and $S_{k-1}>0$, or
  \item $k=1$.
 \end{enumerate}
 The \emph{index $\Ind\bigl(D\mF\bigr)$ of the Jacobi operator} is the number of negative eigenvalues of $D\mF\colon\sT_1\to\sT_1$, where $\sT_1:=\bigcap_j\sT_1^{j,\alpha}$.
\end{defn}

A crucial point is that under the assumptions on $T_{k-1}$ and $S_{k-1}$ given in Theorem~\ref{thm:fredholm}, the index of the Jacobi operator is always finite.

\begin{lem}
 \label{lem:finite_index}
 Fix $k\in\bN$.  Let $(X^{n+1},g)$ be a compact Riemannian manifold with boundary $M^n:=\partial X$ such that $\sigma_k\equiv0$ and $H_k$ is constant; if $k\geq3$, assume additionally that $g$ is locally conformally flat.  Suppose that either
 \begin{enumerate}
  \item $T_{k-1}>0$ and $S_{k-1}>0$, or
  \item $k=1$.
 \end{enumerate}
 Then $D\mF\colon\sT_1\to\sT_1$ admits an orthonormal basis of eigenfunctions with eigenvalues $\lambda_1\leq\lambda_2\leq\lambda_3\leq\dotsb$ tending to $\infty$.  In particular, $\Ind\bigl(D\mF\bigr)$ is finite.
\end{lem}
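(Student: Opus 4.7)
The plan is to extend $D\mF$ to an unbounded self-adjoint operator on $L^2(M,\dvol_h)$ with compact resolvent, and then invoke the spectral theorem for such operators.

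First, I would work with the symmetric bilinear form $Q(\phi,\psi):=\oint_M \phi\,D\mF(\psi)\,\dvol_h$ that was computed explicitly in the proof of Proposition~\ref{prop:fsae}, namely
\[ Q(\phi,\psi) = \int_X T_{k-1}(\nabla v_\phi,\nabla v_\psi)\,\dvol_g + \oint_M \bigl(S_{k-1}(\onabla\phi,\onabla\psi) - (2k-1)\mH_k\phi\psi\bigr)\,\dvol_h. \]
Under the hypothesis $T_{k-1}>0$ together with either $S_{k-1}>0$ or $k=1$ (in which case $S_0\equiv 0$), the first two integrands are nonnegative, while the last contributes at most $C\lV\phi\rV_{L^2}\lV\psi\rV_{L^2}$ since $\mH_k$ is the constant value of $H_k^g$. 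Hence $Q$ is symmetric and satisfies a G{\aa}rding-type lower bound
\[ Q(\phi,\phi) \geq -C\lV\phi\rV_{L^2(M)}^2, \]
so its Friedrichs extension defines a self-adjoint operator on $L^2(M,\dvol_h)\cap\sT_1$ that is semibounded below and agrees with $D\mF$ on $\sT_1$.

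Next I would use Theorem~\ref{thm:fredholm} (or Theorem~\ref{thm:fredholm_Sflat} when $k=1$) together with this semiboundedness to select $\lambda\in\bR$ in the resolvent set. The Schauder estimate of Proposition~\ref{prop:schauder} (or Proposition~\ref{prop:schauder_Sflat}) shows that $(D\mF-\lambda)^{-1}$ sends $\sT_1^{j-2,\alpha}$ continuously into $\sT_1^{j,\alpha}$. Since the inclusion $\sT_1^{j,\alpha}\hookrightarrow\sT_1^{j-2,\alpha}$ is compact by Arzel\`{a}--Ascoli, this resolvent extends by density to a compact self-adjoint operator on $L^2(M,\dvol_h)\cap\sT_1$. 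Applying the spectral theorem for compact self-adjoint operators to $(D\mF-\lambda)^{-1}$ then produces a complete $L^2$-orthonormal basis of eigenfunctions $\{\phi_\ell\}$ with eigenvalues $\mu_\ell\to 0$. Transferring back, each $\phi_\ell$ is an eigenfunction of $D\mF$ with eigenvalue $\lambda_\ell:=\lambda+\mu_\ell^{-1}$, which may be ordered as $\lambda_1\leq\lambda_2\leq\dotsb\to+\infty$. Iterating the Schauder estimates bootstraps each $\phi_\ell$ into $\sT_1$, and since only finitely many $\lambda_\ell$ can be negative, $\Ind(D\mF)$ is finite.

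The main obstacle is the nonlocal character of $D\mF$ through the Dirichlet-extension map $\phi\mapsto v_\phi$, which makes it awkward to verify the standard self-adjoint-extension hypotheses directly. I would bypass this by taking the quadratic form $Q$ itself as the starting point of the spectral analysis, invoking the Schauder theory of Section~\ref{sec:fredholm} only to identify the resulting Friedrichs operator with $D\mF$ on smooth functions and to upgrade the regularity of the eigenfunctions.
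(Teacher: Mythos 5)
Your proposal is correct and takes essentially the same approach as the paper: both arguments rest on the observation that, because $T_{k-1}>0$ and $S_{k-1}\geq 0$, the quadratic form of $D\mF$ is bounded below, and then invoke self-adjointness and ellipticity (via the Schauder estimates of Section~\ref{sec:fredholm}) to diagonalize. The paper phrases this by splitting $D\mF=D-(2k-1)H_k$ with $D$ nonnegative and asserting the diagonalizability of $D$ directly, whereas you make the underlying functional-analytic machinery explicit through the Friedrichs extension and the compactness of the resolvent; this is the same idea, just spelled out in more detail.
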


\begin{proof}
 Define $D\colon C^\infty(M)\to C^\infty(M)$ by
 \[ D(\phi) = T_{k-1}(\eta,\nabla v_\phi) - \odelta\left(S_{k-1}(\onabla\phi)\right) . \]
 where $v_\phi$ is the extension of $\phi$ by~\eqref{eqn:linearized_extension}.  From the proofs of Proposition~\ref{prop:schauder} and Proposition~\ref{prop:schauder_Sflat}, we deduce that $D$ is a formally self-adjoint elliptic operator.  Moreover, the assumptions on $T_{k-1}$ and $S_{k-1}$ imply that the eigenvalues of $D$ are nonnegative and tend to infinity, and that $D$ is diagonalizable by eigenfunctions.  The conclusion follows from the fact that $D\mF=D-(2k-1)H_k$ differs from $D$ by a constant.
\end{proof}

It is important to know for which manifolds zero is not an eigenvalue of $D\mF$.

\begin{defn}
 Fix $k\in\bN$.  Let $(X^{n+1},g)$ be a compact Riemannian manifold with boundary $M^n=\partial X$ such that $\sigma_k\equiv0$ and $H_k$ is constant; if $k\geq3$, assume additionally that $g$ is locally conformally flat.  Suppose that either
 \begin{enumerate}
  \item $T_{k-1}>0$ and $S_{k-1}>0$, or
  \item $k=1$.
 \end{enumerate}
 The Jacobi operator $D\mF$ is \emph{nondegenerate} if $0$ is not an eigenvalue of $D\mF\colon\sT_1\to\sT_1$.
\end{defn}

We are now able to prove Theorem~\ref{thm:bifurcation}, restated below for convenience.  For the purposes of applications of this result, we have given a minimal set of conditions one must check in order to conclude the existence of a bifurcation instant.

\begin{thm}
 \label{thm:bifurcation_restatement}
 Fix $k\in\bN$, $4\leq j\in\bN$, and $\alpha\in(0,1)$.  Let $X^{n+1}$ be a compact manifold with boundary $M^n:=\partial X$.  Let $\{g_s\}_{s\in[a,b]}$ be a smooth one-parameter family of $C^\infty$-metrics on $X$ such that $\sigma_k^{g_s}\equiv0$ and with respect to which $M$ has unit volume and constant $H_k$-curvature for all $s\in[a,b]$.  If $k\geq3$, assume additionally that $g_s$ is locally conformally flat for all $s\in[a,b]$.  Suppose that:
 \begin{enumerate}
  \item for every $s\in[a,b]$, the metric $g_s\rv_{TM}$ is $k$-admissible;
  \item for every $s\in[a,b]$, either
  \begin{enumerate}
   \item $T_{k-1}^{g_s}>0$ and $S_{k-1}^{g_s}>0$, or
   \item $k=1$;
  \end{enumerate}
  \item the Jacobi operators $D\mF^{g_a}$ and $D\mF^{g_b}$ are nondegenerate; and
  \item $\Ind\bigl(D\mF^{g_a}\bigr)\not=\Ind\bigl(D\mF^{g_b}\bigr)$.
 \end{enumerate}
 Then there exists a bifurcation instant $s_\ast\in(a,b)$ for the family $\{g_s\}$.
\end{thm}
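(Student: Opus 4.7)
The plan is to recast Theorem~\ref{thm:bifurcation_restatement} as a bifurcation problem for the nonlocal functional $\mF$ on the boundary, and then invoke the Fiberwise Bifurcation Theorem of de Lima, Piccione and Zedda~\cite{deLimaPiccioneZedda2012}. For each $s\in[a,b]$, hypotheses (1) and (2), combined with Proposition~\ref{prop:mV_banach}, place the reference conformal factor $w=1$ inside the Banach manifold $\sV_k^{j,\alpha}(X,g_s)$. Since $\sigma_k^{g_s}\equiv 0$ and $H_k^{g_s}$ is constant, $w=1$ is a critical point of the functional $\mF^{g_s}\colon\sV_k^{j,\alpha}(X,g_s)\to\sT_1^{j-2,\alpha}$ defined in~\eqref{eqn:defn_mF}. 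The goal is then to produce a sequence of nontrivial critical points of these functionals accumulating at $w=1$ as $s\to s_\ast$ for some $s_\ast\in(a,b)$.

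The first step is to assemble $\{\sV_k^{j,\alpha}(X,g_s)\}_{s\in[a,b]}$ into a smooth Banach bundle $\mathscr{V}\to[a,b]$ equipped with a smooth family of functionals sharing the constant section $s\mapsto 1$. I would do this by applying the parametric Implicit Function Theorem to the map
\[ \widetilde{\Psi}(s,W,w) := \left( \sigma_k^{e^{2W}g_s},\, W\rv_M - w \right), \]
exactly as in the proof of Proposition~\ref{prop:mV_banach}, noting that the partial linearization in $W$ along the trivial branch is invertible since $T_{k-1}^{g_s}>0$ uniformly for $s\in[a,b]$ (hypothesis (2)) and the cone conditions appearing in~\eqref{eqn:mV_defn} are open. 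This produces smooth local trivializations of $\mathscr{V}$ about the trivial section, and hence a smooth family of functionals $\mF^{g_s}$.

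With the bundle in place, the remaining hypotheses of the Fiberwise Bifurcation Theorem are supplied directly by Sections~\ref{sec:bg} and~\ref{sec:fredholm}. Indeed, Theorem~\ref{thm:fredholm} (or Theorem~\ref{thm:fredholm_Sflat} when $k=1$) shows that the Jacobi operator $D\mF^{g_s}$ at $w=1$ is Fredholm of index zero for every $s$; Proposition~\ref{prop:fsae} shows it is formally self-adjoint; and Lemma~\ref{lem:finite_index} provides a finite Morse index $\Ind(D\mF^{g_s})$. Hypotheses (3) and (4) supply nondegeneracy at the endpoints and the index jump. The fiberwise theorem then produces $s_\ast\in(a,b)$ and sequences $s_\ell\to s_\ast$ and $w_\ell\to 1$ in $C^{j,\alpha}(M)$, with $w_\ell\neq 1$ and $\mF^{g_{s_\ell}}(w_\ell)=0$. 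Passing to the canonical interior extensions $\widetilde w_\ell$ via~\eqref{eqn:metric_extension_interior}, the metrics $g_\ell:=e^{2\widetilde w_\ell}g_{s_\ell}$ have $\sigma_k^{g_\ell}\equiv 0$ by construction and constant $H_k^{g_\ell}$ by the vanishing of $\mF^{g_{s_\ell}}(w_\ell)$, thereby realizing every clause of Definition~\ref{defn:bifurcation_instant}.

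The main obstacle I anticipate is the smoothness of the Banach bundle $\mathscr{V}\to[a,b]$ in a form strong enough to feed into the fiberwise theorem: the Implicit Function Theorem provides only local trivializations, and patching these requires verifying smooth compatibility and preservation of the trivial critical section across overlaps. A cleaner alternative is to work on a single ambient Banach manifold by using the local diffeomorphisms from Proposition~\ref{prop:mV_banach} to pull everything back to a fixed open subset of $C^{j,\alpha}(M)$, reducing the problem to a smooth path of functionals on a fixed domain with a common critical point; either way, all analytic content is already available, and what remains is essentially a bookkeeping exercise to cast the setup in the precise form demanded by~\cite{deLimaPiccioneZedda2012}.
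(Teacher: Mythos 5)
Your proposal takes essentially the same route as the paper: both form the fiber bundle $\sV^{j,\alpha}=\bigcup_s\sV_{k,s}^{j,\alpha}$, define the fiberwise map $\Phi(s,f)=(s,\mF^{g_s}(f))$ with trivial section $\Phi(s,1)=(s,0)$, cite Theorem~\ref{thm:fredholm}/\ref{thm:fredholm_Sflat}, Lemma~\ref{lem:finite_index}, and Proposition~\ref{prop:fsae} for the Fredholm, finite-index, and self-adjointness hypotheses, and then invoke the Fiberwise Bifurcation Theorem of~\cite{deLimaPiccioneZedda2012} together with conditions (3) and (4). The ``obstacle'' you flag about smoothness of the bundle is a fair observation; the paper itself does not elaborate on this point, and your proposed parametric Implicit Function Theorem argument and the pull-back alternative are reasonable ways to supply that detail.
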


\begin{proof}
 The proof is by the Fiberwise Bifurcation Theorem~\cite[Theorem~A.2]{deLimaPiccioneZedda2012}, as we now explain.  By Proposition~\ref{prop:mV_banach}, the space $\sV_{k,s}^{j,\alpha}$ defined in terms of the metric $g_s$ is a Banach space.  Denote $\sV^{j,\alpha}:=\bigcup_s \sV_{k,s}^{j,\alpha}$ and $\sT^{j,\alpha}:=\bigcup_s\sT_s^{j,\alpha}$.  Define $\Phi\colon\sV^{j,\alpha}\to\sT^{j-i,\alpha}$ by
 \[ \Phi(s,f) =(s, \mF^{g_s}(f)) , \]
 where $\mF^{g_s}$ is defined by~\eqref{eqn:defn_mF} in terms of the background metric $g_s$ and
 \[ i = \begin{cases}
         1, & \text{if $k=1$}, \\
         2, & \text{otherwise}.
        \end{cases} \]
 Then $\Phi(s,1)=(s, 0)$ for all $s\in[a,b]$.  By Lemma~\ref{lem:finite_index} and either Theorem~\ref{thm:fredholm} or Theorem~\ref{thm:fredholm_Sflat} corresponding to the cases $i=2$ or $i=1$, respectively, the linearizations $D\Phi(s,\cdot)=D\mF^{g_s}$ are Fredholm operators which are diagonalizable by eigenfunctions.  Moreover, since the Jacobi operators $D\mF^{g_a}$ and $D\mF^{g_b}$ are nondegenerate with unequal indices, all of the conditions of the Fiberwise Bifurcation Theorem~\cite[Theorem~A.2]{deLimaPiccioneZedda2012} are met.  The conclusion of the Fiberwise Bifurcation Theorem gives the existence of the bifurcation instant $s_\ast$.
\end{proof}

Since $\sigma_k$, $H_k$, and $D\mF^g$ are homogeneous with respect to homothetic rescalings of the metric, we could equally as well study bifurcation instants for smooth families of solutions of~\eqref{eqn:constant_Hk_problem}.  
It is in this form that our bifurcation theorem is most useful, as then we do not need to explicitly normalize the family.  Here the definition of a bifurcation instant is the obvious modification of Definition~\ref{defn:bifurcation_instant}.

\begin{cor}
 \label{cor:bifurcation_genl_volume}
 Fix $k\in\bN$, $4\leq j\in\bN$, and $\alpha\in(0,1)$.  Let $X^{n+1}$ be a compact manifold with boundary $M^n:=\partial X$.  Let $\{g_s\}_{s\in[a,b]}$ be a smooth one-parameter family of $C^\infty$-metrics on $X$ such that $\sigma_k^{g_s}\equiv0$ and with respect to which $M$ has constant $H_k$-curvature for all $s\in[a,b]$.  If $k\geq3$, assume additionally that $g_s$ is locally conformally flat for all $s\in[a,b]$.  Assume that:
 \begin{enumerate}
  \item for every $s\in[a,b]$, the metric $g_s\rv_{TM}$ is $k$-admissible;
  \item for every $s\in[a,b]$, either
  \begin{enumerate}
   \item $T_{k-1}^{g_s}>0$ and $S_{k-1}^{g_s}>0$, or
   \item $k=1$;
  \end{enumerate}
  \item $\ker D\mF^{g_a}, \ker D\mF^{g_b} \subset \bR$, where $\bR$ denotes the space of constant functions on $M$ and $D\mF$ is considered as an operator on $C^{j,\alpha}(M)$ by the formula in Corollary \ref{cor:boundary_dF}; and
  \item $\Ind\bigl(D\mF^{g_s}\bigr)\not=\Ind\bigl(D\mF^{g_s}\bigr)$, where the index is computed on $\bR^\perp$, the $L^2$-orthogonal complement of the constant functions.
 \end{enumerate}
 Then there exists a bifurcation instant $s_\ast\in(a,b)$ for the family $\{g_s\}$.
\end{cor}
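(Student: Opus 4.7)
The plan is to reduce directly to Theorem~\ref{thm:bifurcation_restatement} by homothetically renormalizing the family $\{g_s\}$ so that the induced boundary metrics have unit volume, and then verifying that every hypothesis of the theorem transfers, as well as the conclusion.

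\textbf{Step 1: Renormalization.} For each $s\in[a,b]$, let $c_s:=\Vol_{\iota^\ast g_s}(M)^{-1/(2n)}$ and set $\cg_s:=c_s^2g_s$.  Since $s\mapsto g_s$ is smooth and the volume functional is smooth and positive, $s\mapsto c_s$ is smooth.  By construction $\cg_s$ induces unit volume on $M$.  Under a homothetic rescaling $\cg=c^2g$ one has the scaling laws
\[ \sigma_k^{\cg}=c^{-2k}\sigma_k^g,\qquad H_k^{\cg}=c^{-(2k-1)}H_k^g,\qquad T_{k-1}^{\cg}=c^{-(2k-2)}T_{k-1}^g, \]
and analogously $S_{k-1}^{\cg}=c^{-(2k-2)}S_{k-1}^g$ (where the Newton tensors are regarded as endomorphisms and the scalar $\sigma_k$ and $H_k$ as written).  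In particular, $\sigma_k^{\cg_s}\equiv0$, $H_k^{\cg_s}$ is constant, the positivity hypotheses on $T_{k-1}$ and $S_{k-1}$ persist, and $k$-admissibility and local conformal flatness of $g_s$ are preserved.

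\textbf{Step 2: Matching the spectral hypotheses.}  The remaining hypotheses of Theorem~\ref{thm:bifurcation_restatement} are the nondegeneracy of $D\mF^{\cg_a}, D\mF^{\cg_b}\colon\sT_1^{j,\alpha}\to\sT_1^{j-2,\alpha}$ and the inequality of their indices.  From the formula in Corollary~\ref{cor:boundary_dF}, a direct check shows that $D\mF^g(1)=-(2k-1)\mH_k^g$ is a constant; hence $D\mF^g$ preserves $\bR$, and by the formal self-adjointness proved in Proposition~\ref{prop:fsae} it also preserves $\bR^\perp$ (with respect to the $L^2$-pairing induced by the boundary volume form).  Moreover, this $L^2$-orthogonal complement of constants equals $\sT_1^{j,\alpha}$ by the very definition~\eqref{eqn:mT_defn}.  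Thus the assumption $\ker D\mF^{g_s}\subset\bR$ at $s=a,b$ is exactly the statement that $D\mF^{g_s}$ acting on $\sT_1^{j,\alpha}$ has trivial kernel, i.e.\ is nondegenerate in the sense of Theorem~\ref{thm:bifurcation_restatement}; and the index of $D\mF^{g_s}$ on $\bR^\perp$ coincides with $\Ind\bigl(D\mF^{g_s}\bigr)$ as defined in Definition~\ref{defn:index_DF}.  Under the rescaling $\cg_s=c_s^2g_s$, the Jacobi operator $D\mF^{\cg_s}$ differs from $D\mF^{g_s}$ by an overall positive scalar factor (a consequence of the scaling laws in Step~1 applied term-by-term in the formula for $D\mF$), so its kernel and the sign of each eigenvalue are unchanged.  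Therefore nondegeneracy and the index inequality transfer from $\{g_s\}$ to $\{\cg_s\}$ at the endpoints.

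\textbf{Step 3: Transfer of the bifurcation instant.}  Applying Theorem~\ref{thm:bifurcation_restatement} to $\{\cg_s\}$ produces a bifurcation instant $s_\ast\in(a,b)$ with approximating sequences $s_\ell\to s_\ast$ and $w_\ell\to 0$ in $C^{j,\alpha}$ such that $e^{2w_\ell}\cg_{s_\ell}$ solves~\eqref{eqn:constant_Hk_problem} and is nonhomothetic to $\cg_{s_\ell}$.  Writing $e^{2w_\ell}\cg_{s_\ell}=e^{2(w_\ell+\log c_{s_\ell})}g_{s_\ell}$, these are also solutions of~\eqref{eqn:constant_Hk_problem} in the conformal class $[g_{s_\ell}]$, nonhomothetic to $g_{s_\ell}$.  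This produces a bifurcation instant for the original family in the sense of the obvious volume-free analogue of Definition~\ref{defn:bifurcation_instant}.

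The main subtlety to address carefully is Step~2, namely confirming that the naive identification of ``kernel $\subset\bR$ and index on $\bR^\perp$'' with ``nondegeneracy and index on $\sT_1$'' really is exact; this requires both the invariance of $\bR$ under $D\mF$ and the self-adjointness from Proposition~\ref{prop:fsae}, and a careful check that the $L^2$-orthogonal decomposition into constants and $\sT_1^{j,\alpha}$ respects the Fredholm and index theory developed in Section~\ref{sec:fredholm}.  The scaling and renormalization arguments in Steps~1 and~3 are then routine.
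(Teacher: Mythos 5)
Your proof is correct and follows essentially the same route as the paper's: homothetically rescale the family so the boundary has unit volume, verify that the hypotheses of Theorem~\ref{thm:bifurcation_restatement} transfer (using the scaling laws for $\sigma_k$, $H_k$, $T_{k-1}$, $S_{k-1}$, $D\mF$ and the identification of $\sT_1^{j,\alpha}$ with the mean-zero functions), apply the theorem, and translate the conclusion back. One trivial slip: with $\cg_s=c_s^2g_s$ the boundary volume scales by $c_s^n$, so to get unit volume you need $c_s=\Vol_{\iota^\ast g_s}(M)^{-1/n}$, not $\Vol_{\iota^\ast g_s}(M)^{-1/(2n)}$; this has no effect on the rest of the argument.
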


\begin{proof}
 Note that if $g$ is a Riemannian metric on $X$ and $c\in\bR$, then the homothetically rescaled metric $g_c:=e^{2c}g$ is such that
 \begin{align*}
  \sigma_k^{g_c} & = e^{-2kc}\sigma_k^g, \\
  H_k^{g_c} & = e^{-(2k-1)c}H_k^g , \\
  D\mF^{g_c} & = e^{-(2k-1)c}D\mF^{g_c} .
 \end{align*}
 In particular, the properties $\sigma_k^g\equiv0$ and $H_k^g$ constant are both invariant with respect to homotheties, as are the isomorphism $T_1\sV_k^{j,\alpha}=\sT_1^{j,\alpha}$ and the index $\Ind\bigl(D\mF^{g}\bigr)$ of the Jacobi operator $D\mF^g$.  Since the tangent space to the space of homotheties is isometric to the constant functions, we see that we may homothetically rescale the family $\{g_s\}$ so that it satisfies the hypotheses of Theorem~\ref{thm:bifurcation_restatement}.  Moreover, the resulting bifurcation instant $s_\ast$ for the rescaled family is also a bifurcation instant for the original family.
\end{proof}

As stated, Theorem~\ref{thm:bifurcation_restatement} and Corollary~\ref{cor:bifurcation_genl_volume} require a one-parameter family of Riemannian metrics on a fixed manifold with boundary satisfying a number of hypotheses.  We will find it useful to instead construct examples by choosing a one-parameter family of domains in a fixed Riemannian manifold.  The following version of Theorem~\ref{thm:bifurcation_restatement} applies to examples constructed this way.

\begin{cor}
 \label{cor:bifurcation}
 Fix $k\in\bN$, $4\leq j\in\bN$, and $\alpha\in(0,1)$.  Let $a\in\bR_+$ and denote by $\oB^{n+1}(a)$ the closed ball of radius $a$ in $\bR^{n+1}$.  Let $(N^m,g_N)$ be a compact Einstein manifold and suppose that there is an odd smooth function $f\colon(-a,a)\to\bR$ and an even smooth function $\psi\colon(-a,a)\to\bR_+$ such that
 \begin{equation}
  \label{eqn:warped_product}
  g := dr^2 \oplus f^2(r)\,d\theta^2 \oplus \psi^2(r)\,g_N
 \end{equation}
 defines a smooth metric on $X:=\oB^{n+1}(a)\times N^m$ such that $g\in\overline{\Gamma_k^+}$ and $\sigma_k^g\equiv0$, where $r(x)=\lv x\rv$ for $x\in\oB^{n+1}(a)$; if $k\geq3$, assume additionally that $g$ is locally conformally flat.  Given $s\in(0,a)$, set
 \[ X_s := \left\{ (x,y) \in X \suchthat r(x) \leq s \right\} \]
 and let $g_s$ denote the restriction of $g$ to $X_s$.  Assume that there are $s_1,s_2\in(0,a)$ such that $s_1<s_2$ and:
 \begin{enumerate}
  \item for every $s\in[a,b]$, the metric $g_s\rv_{TM}$ is $k$-admissible;
  \item for every $s\in[a,b]$, either
  \begin{enumerate}
   \item $T_{k-1}^{g_s}>0$ and $S_{k-1}^{g_s}>0$, or
   \item $k=1$;
  \end{enumerate}
  \item $\ker D\mF^{g_{s_1}}, \ker D\mF^{g_{s_2}} \subset \bR$, where $\bR$ denotes the space of constant functions; and
  \item $\Ind\bigl(D\mF^{g_{s_1}}\bigr)\not=\Ind\bigl(D\mF^{g_{s_2}}\bigr)$ when computed on $\bR^\perp$.
 \end{enumerate}
 Then $\partial X_s$ has constant $H_k$-curvature for all $s\in(0,a)$, and there exists a bifurcation instant $s_\ast\in(s_1,s_2)$ for the family $(X_s,g_s)$.
\end{cor}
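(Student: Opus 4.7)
The argument has two parts: (i) show that $\partial X_s$ has constant $H_k$-curvature for every $s \in (0, a)$ by a direct calculation from the warped product formulas together with the Einstein hypothesis on $(N, g_N)$; and (ii) convert the family of bounded domains $\{(X_s, g_s)\}$ into a smoothly varying family of metrics on a fixed manifold with boundary, so that Corollary~\ref{cor:bifurcation_genl_volume} applies verbatim.

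\textbf{Constancy of $H_k$.} Because $(N, g_N)$ is Einstein and $g$ is a doubly warped product with respect to the splitting $TX = \bR\partial_r \oplus TS^n \oplus TN$, the standard warped product formulas produce a Ricci tensor that is block diagonal in this splitting, with each block a scalar (depending only on $r$) times the corresponding factor metric. The Schouten tensor $P^g$ inherits the same block structure. Writing $h = f^2(s)\,d\theta^2 \oplus \psi^2(s)\,g_N$ for the induced metric on $\partial X_s$, the endomorphism $h^{-1}\iota^\ast P^g$ then acts as $p_1(s)\,\Id$ on $TS^n$ and as $p_2(s)\,\Id$ on $TN$ for some scalars $p_1(s), p_2(s)$, while the outward unit normal $\eta = \partial_r$ yields $h^{-1}A = (f'(s)/f(s))\,\Id|_{TS^n} + (\psi'(s)/\psi(s))\,\Id|_{TN}$. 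Both endomorphisms are pointwise simultaneously diagonal with eigenvalues independent of the point on $\partial X_s$, so every polarized symmetric function $\sigma_{2k-j-1,j}(h^{-1}\iota^\ast P^g, h^{-1}A)$ in~\eqref{eqn:Hk_defn} is constant on $\partial X_s$, and so is $H_k^g$. The Einstein hypothesis on $N$ is essential: without it, the $TN$-block of the Schouten tensor would inherit a nontrivial contribution from $\Ric^N$ and $H_k^g$ would generically vary on $\partial X_s$ whenever $(N, g_N)$ is not homogeneous.

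\textbf{Reduction and bifurcation.} Fix the model manifold $Y := \oB^{n+1}(1) \times N$, and for each $s \in (0, a)$ define the radial rescaling diffeomorphism $\Psi_s \colon Y \to X_s$ by $\Psi_s(x, y) := (sx, y)$. The pulled-back metric $\tilde g_s := \Psi_s^\ast g_s$ is smooth on $Y$ (smoothness at $x = 0$ is inherited from that of $g$ via the hypotheses that $f$ is odd and $\psi$ is even) and depends smoothly on $s > 0$; in radial coordinates it has the explicit form $\tilde g_s = s^2\,dr^2 \oplus f^2(sr)\,d\theta^2 \oplus \psi^2(sr)\,g_N$. Since $\Psi_s$ is an isometry between $(Y, \tilde g_s)$ and $(X_s, g_s)$, and since every construction entering Corollary~\ref{cor:bifurcation_genl_volume}, namely the canonical extension~\eqref{eqn:metric_extension_interior}, the tensors $\sigma_k^g, H_k^g, T_{k-1}^g, S_{k-1}^g$, $k$-admissibility of the boundary metric, local conformal flatness when $k \geq 3$, and the Jacobi operator $D\mF^g$ together with its spectrum, is natural under isometries, all four hypotheses of that corollary transfer verbatim from Corollary~\ref{cor:bifurcation} to the family $\{\tilde g_s\}_{s \in [s_1, s_2]}$ on $Y$. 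Applying Corollary~\ref{cor:bifurcation_genl_volume} produces a bifurcation instant $\tilde s_\ast \in (s_1, s_2)$ for $\{\tilde g_s\}$; by naturality under the diffeomorphisms $\Psi_s$ this is also a bifurcation instant $s_\ast$ for the original family $\{(X_s, g_s)\}$.

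\textbf{Main obstacle.} No step poses serious analytic difficulty beyond the computations already carried out in Sections~\ref{sec:bg}--\ref{sec:bifurcation}; the only conceptual point that must be handled with care is the pointwise constancy of $H_k^g$ on $\partial X_s$ in Step (i), since the ambient symmetry group of $g$ need not act transitively on $\partial X_s$ (the isometry group of $N$ may be small). The Einstein condition is precisely what rescues this, reducing $H_k$ to a polynomial in eigenvalues depending only on $s$.
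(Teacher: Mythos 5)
Your proposal is correct and follows essentially the same two-step argument as the paper: establish constancy of $H_k$ on $\partial X_s$ via the warped product formulas and the Einstein condition on $N$, then pull back along the radial rescaling diffeomorphism to obtain a one-parameter family of metrics on a fixed manifold to which Corollary~\ref{cor:bifurcation_genl_volume} applies. The only cosmetic difference is that you rescale to $\oB^{n+1}(1)\times N$ rather than to $X=\oB^{n+1}(a)\times N$ as the paper does, and you spell out a bit more explicitly the block-diagonal structure of $h^{-1}\iota^\ast P$ and $h^{-1}A$ and the naturality of all constructions under isometry; both are inessential variations.
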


\begin{proof}
 A straightforward computation shows that the second fundamental form of $\partial X_s$ with respect to the warped product metric~\eqref{eqn:warped_product} is
 \[ A = (ff^\prime)(r)\,d\theta^2 \oplus (\psi\psi^\prime)(r)\,g_N . \]
 Meanwhile, the well-known formula (e.g.\ \cite[(25)--(28)]{Bohm1999}) for the Ricci curvature of a multiply-warped product implies that the restriction of the Schouten tensor of $P$ to $T\partial X_s$ depends only on the scalar curvature of $g_N$ and the derivatives $f^{(j)}(s)$ and $\psi^{(j)}(s)$ for $j\in\{0,1,2\}$.  Together, this implies that the $H_k$-curvature of $\partial X_s$ is constant.

 Define $\Phi_s\colon X_s\to X$ by
 \[ \Phi_s(x,y) = \left( \frac{ax}{s}, y\right) . \]
 We readily check that $\Phi_s$ is a diffeomorphism, and hence $\hg_s:=(\Phi_s^{-1})^\ast g_s$ defines a one-parameter family of metrics on $X$.  The conclusion follows by applying Corollary \ref{cor:bifurcation_genl_volume} to $(X,\hg_s)$.
\end{proof}

\section{Examples}
\label{sec:example}

In this section we describe some examples to which Theorem~\ref{thm:bifurcation} applies, focusing on the fully nonlinear case $k\geq2$.  Together these examples prove Theorem~\ref{thm:nonuniqueness_neg}, Theorem~\ref{thm:nonuniqueness_pos}, and Theorem~\ref{thm:nonuniqueness_zero}.  From the perspective of the boundary, our examples are conformal classes of products of Einstein manifolds, and in this way can be regarded as fully nonlinear, nonlocal analogues of bifurcation results for the Yamabe Problem~\cite{deLimaPiccioneZedda2012} and the fractional $Q$-curvature of order one~\cite{Diaz2018}.  However, the requirement that the interior metric be smooth and satisfy the degenerate elliptic equation $\sigma_k\equiv0$ precludes us from constructing the interior metric by general existence results (e.g.\ \cite{CaseWang2016s}), making it more difficult to find families to which to apply Theorem~\ref{thm:bifurcation}.  The examples we consider all arise as boundaries of the product of a positive and a negative Einstein manifold.

The first example we consider is when the interior geometry is a product of a round spherical cap with constant sectional curvature $1$ and a hyperbolic manifold with constant sectional curvature $-1$.  The boundary of such a space is a Riemannian product of a round sphere with scalar curvature dependent on the size of the cap and a hyperbolic manifold with constant sectional curvature $-1$.  The crucial point here is to choose the dimensions of each factor appropriately to obtain, for a given $k\in\bN$, a metric $g\in\overline{\Gamma_k^+}$ for which $\sigma_k\equiv0$.  By varying the size of the cap, we obtain a family of metrics to which Corollary~\ref{cor:bifurcation} can be applied.  The choice of normalization of the interior factors ensures that the product is locally conformally flat, and hence this construction gives examples for all $k\in\bN$.  We discuss the cases $k=2,3$, and thereby prove Theorem~\ref{thm:nonuniqueness_neg}, in Subsection~\ref{subsec:example/lcf} below.  The details for the cases $k\geq4$ are left to the interested reader.

The second example we consider is when the interior geometry is a product of a small geodesic ball in a hyperbolic manifold with constant sectional curvature $-1$ and a round sphere.  The boundary of such a space is a Riemannian product of round spheres, the first of which has scalar curvature dependent on the size of the geodesic ball.  As in the first example, the normalization ensures that the interior manifold is locally conformally flat, while it is possible to choose the dimensions of the factors so that the product metric lies in $\overline{\Gamma_k^+}$ and has $\sigma_k\equiv0$.  By varying the size of the geodesic ball, we obtain a family of metrics to which Corollary~\ref{cor:bifurcation} can be applied.  We discuss the cases $k=2,3$, and thereby prove Theorem~\ref{thm:nonuniqueness_pos}, in Subsection~\ref{subsec:example/lcf2} below.  The details for the cases $k\geq4$ are left to the interested reader.

The third example we consider is when the interior geometry is a product of a rotationally symmetric domain in the negative Einstein warped product $\bR^2\times_f F^{m-2}$ (see~\cite[Example~9.118(d)]{Besse}) and a round sphere.  Since this negative Einstein manifold is not locally conformally flat, this example only works for $k=2$.  However, by choosing the dimensions of the factors appropriately, we can make sure that the product metric lies in $\overline{\Gamma_2^+}$ and has $\sigma_2\equiv0$.  By varying the size of the domain in the negative Einstein factor, we obtain a family of metrics to which Corollary~\ref{cor:bifurcation} can be applied, and thereby prove Theorem~\ref{thm:nonuniqueness_zero}.

A common theme in these examples is that there are special dimensions for which the product of Einstein manifolds $(M^n,g_M)$ and $(H^m,g_H)$ with $\Ric_{g_M}=(n-1)g_M$ and $\Ric_{g_H}=-(m-1)g_H$, respectively, lie in $\overline{\Gamma_k^+}$ and have $\sigma_k\equiv0$ and $T_{k-1}>0$.

\begin{lem}
 \label{lem:einstein_products}
 Let $(M^n,g_M)$ and $(H^m,g_H)$ be Einstein manifolds with $\Ric_{g_M}=(n-1)g_M$ and $\Ric_{g_H}=-(m-1)g_H$, respectively, and let $(X^{m+n},g)$ denote their Riemannian product.  Let $\ell\in\bN$.
 \begin{enumerate}
  \item If $n=\frac{(\ell+1)(\ell+2)}{2}$ and $m=\frac{\ell(\ell+1)}{2}$, then $(X,g)$ is such that
  \[ \sigma_1 = \frac{\ell+1}{2}, \qquad \sigma_2 \equiv 0 . \]
  Moreover,
  \[ T_1 = \frac{\ell}{2}g_M \oplus \frac{\ell+2}{2}g_H . \]
  \item If $n=\frac{(\ell+1)(3\ell+2)}{2}$ and $m=\frac{\ell(3\ell-1)}{2}$, then $(X,g)$ is such that
  \[
   \sigma_1 = \frac{3\ell+1}{2}, \qquad \sigma_2 = \frac{\ell(3\ell+2)}{4}, \qquad \sigma_3 \equiv 0 . \]
  Moreover,
  \[ T_2 = \frac{\ell(3\ell-1)}{4}g_M \oplus \frac{(\ell+1)(3\ell+2)}{4}g_H . \]
  \item If $n=\frac{(\ell+1)(3\ell+4)}{2}$ and $m=\frac{\ell(3\ell+1)}{2}$, then $(X,g)$ is such that
  \[ \sigma_1 = \frac{3\ell+2}{2}, \qquad \sigma_2 = \frac{(\ell+1)(3\ell+1)}{4}, \qquad \sigma_3 \equiv 0 . \]
  Moreover,
  \[ T_2 = \frac{\ell(3\ell+1)}{4}g_M \oplus \frac{(\ell+1)(3\ell+4)}{4}g_H . \]
 \end{enumerate}
\end{lem}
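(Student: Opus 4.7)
The proof is a direct (if somewhat combinatorial) computation which I would organize in three steps.

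First I would compute the Schouten tensor of the product metric. Since the product structure gives $\Ric_g = (n-1)g_M \oplus -(m-1)g_H$, the scalar curvature is $R = n(n-1) - m(m-1) = (n-m)(n+m-1)$, so $R/[2(n+m-1)] = (n-m)/2$. Substituting into the Schouten formula for the $(n+m)$-dimensional manifold $X$, the prefactor $1/(n+m-2)$ cancels against the factor appearing in $\Ric - \frac{R}{2(n+m-1)}g$, yielding the remarkably clean identity
\[ P = \tfrac{1}{2}g_M \oplus \bigl(-\tfrac{1}{2}\bigr)g_H . \]
Consequently $g^{-1}P$ has exactly two eigenvalues on $TX$: $+1/2$ with multiplicity $n$ on $TM$, and $-1/2$ with multiplicity $m$ on $TH$. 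This cancellation is the heart of the lemma and is specific to the chosen Einstein normalizations.

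Next I would translate this into explicit polynomials in $(n,m)$. Because $g^{-1}P$ is a two-eigenvalue operator, the relevant elementary symmetric functions reduce to binomial sums,
\[ \sigma_k^g = \frac{1}{2^k}\sum_{i+j=k}(-1)^j\binom{n}{i}\binom{m}{j}, \]
and $T_{k-1}^g$ is block-diagonal of the form $\mu_M g_M \oplus \mu_H g_H$, where
\[ \mu_M = \frac{1}{2^{k-1}}\sum_{i+j=k-1}(-1)^j\binom{n-1}{i}\binom{m}{j}, \qquad \mu_H = \frac{1}{2^{k-1}}\sum_{i+j=k-1}(-1)^j\binom{n}{i}\binom{m-1}{j} . \]
For $k\leq 3$ these can be expanded in closed form. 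One obtains $\sigma_1 = (n-m)/2$, $8\sigma_2 = (n-m)^2 - (n+m)$, and after a short manipulation
\[ 48\,\sigma_3 = (n-m)\bigl[(n-m)^2 - 3(n+m) + 2\bigr] ; \]
analogous closed forms hold for $\mu_M, \mu_H$, e.g.\ $2\mu_M = n-1-m$ and $2\mu_H = n-m+1$ when $k=2$.

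Finally, each of the three cases follows by plugging the dimensional hypotheses into these formulas. In case (1) one has $n-m = \ell+1$ and $n+m = (\ell+1)^2$, so $\sigma_1 = (\ell+1)/2$, $8\sigma_2 = (\ell+1)^2 - (\ell+1)^2 = 0$, $\mu_M = \ell/2$, $\mu_H = (\ell+2)/2$. In case (2) the choice $n-m = 3\ell+1$ and $n+m = 3\ell^2 + 2\ell + 1$ gives $(n-m)^2 - 3(n+m) + 2 = 0$, hence $\sigma_3 = 0$, and similarly in case (3) with $n-m = 3\ell+2$, $n+m = 3\ell^2 + 4\ell + 2$; in both cases the stated values of $\sigma_1, \sigma_2$ and the Newton-tensor eigenvalues follow by the same closed forms (using $(n-1-m)^2 - (n-1+m)$ and $(n-m+1)^2 - (n+m-1)$ for $8\mu_M$ and $8\mu_H$ when $k=3$). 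The only real obstacle is keeping the algebra organized, but the given $(n,m)$ are precisely the integer solutions to the relevant Diophantine identities — in particular $(n-m)^2 = 3(n+m)-2$ for the $k=3$ cases — so each polynomial identity reduces to a one-line check in $\ell$.
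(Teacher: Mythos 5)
Your proof is correct and takes essentially the same approach as the paper's: both reduce to the observation that with these Einstein normalizations the Schouten tensor is $P=\tfrac12 g_M\oplus(-\tfrac12)g_H$, after which the $\sigma_k$ and $T_{k-1}$ become polynomial identities in $(n,m)$ that one verifies by direct substitution. You have simply filled in the combinatorics that the paper labels ``straightforward to check.''
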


\begin{remark}
 One can check that, with the normalizations of the factors as given, these are the only choices of dimensions for which the product manifold $(X,g)$ has $g\in\overline{\Gamma_k^+}$, $\sigma_k\equiv0$, and $T_{k-1}>0$ for $k\in\{2,3\}$.
\end{remark}

\begin{remark}
 \label{rk:genl_normalization}
 Given Einstein manifolds $(M^n,g_M)$ and $(H^m,g_H)$ with $\Ric_{g_M}=(n-1)g_M$ and $\Ric_{g_H}=-(m-1)g_H$, of sufficiently large dimension, one can show that there is a scalar $s\in\bR_+$ such that $(M\times H,g_M\oplus s^2g_H)$ satisfies $\sigma_1^g>0$, $\sigma_2^g\equiv0$, and $T_1^g>0$.  Such examples would allow us to relax the dimensional restriction in Theorem~\ref{thm:nonuniqueness_zero} and in the case $k=2$ of Theorem~\ref{thm:nonuniqueness_neg} and Theorem~\ref{thm:nonuniqueness_pos}.
\end{remark}

\begin{proof}
 It is straightforward to check that the Schouten tensor of $(X,g)$ is
 \[ P = \frac{1}{2}\left(g_M \oplus (-g_{H})\right) . \]
 The computations of $\sigma_k$, $k\in\{1,2,3\}$, and $T_1,T_2$ readily follow.
\end{proof}

The primary distinction between the three examples is in the choice of which Einstein factor has a boundary and what conditions are placed on that boundary.  We treat these three cases separately according to the intrinsic geometry of the boundary of the product.

\subsection{Products of a sphere and a hyperbolic manifold}
\label{subsec:example/lcf}

In this subsection we apply Theorem~\ref{thm:bifurcation} to products of a spherical cap and a hyperbolic manifold with sectional curvature $1$ and $-1$, respectively.  This normalization ensures that the product is locally conformally flat, allowing us to apply Theorem~\ref{thm:bifurcation} for general $k$.  Moreover, we choose the dimensions of the factors so that Lemma~\ref{lem:einstein_products} applies.  Our first task is to study the geometry of the boundary of these products.

\begin{lem}
 \label{lem:lcf_computations}
 Given $n,m\in\bN$, denote by $(S^{n+1},d\theta^2)$ and $(H^m,g_H)$ the round $(n+1)$-sphere of constant sectional cuvature $1$ and an $m$-dimensional manifold of constant sectional curvature $-1$, respectively.  Given $\varepsilon\in(0,\pi/2)$, set
 \[ S_\varepsilon^{n+1} = \left\{ x\in S^{n+1} \suchthat ar(x)\leq\varepsilon \right\}, \]
 where $r$ is the geodesic distance from a fixed point $p\in S^{n+1}$.  Let $(X_\varepsilon^{m+n+1},g)$ denote the Riemannian product of $(S_\varepsilon^{n+1},d\theta^2)$ and $(H^m,g_H)$, and let $\iota$ denote the inclusion of $H^m$ into $\partial X_\varepsilon$.  Let $\kappa=\cot\varepsilon$ denote the mean curvature of $\partial S_\varepsilon^{n+1}$ in $S_\varepsilon^{n+1}$ and let $\ell\in\bN$.
 \begin{enumerate}
  \item Set $n=\frac{\ell(\ell+3)}{2}$ and $m=\frac{\ell(\ell+1)}{2}$.  Then $(X_{\varepsilon},g)$ is such that
  \[ \sigma_1 = \frac{\ell+1}{2}, \qquad \sigma_2 \equiv 0, \qquad T_1 > 0 . \]
  Moreover, $g\rv_{T\partial X_\varepsilon}$ is $2$-admissible and the boundary $\partial X_\varepsilon$ is such that $H_2$ is a nonnegative constant, $S_1>0$, and
  \begin{align*}
   H_2 & = \frac{2(\ell^2+2\ell-3)!}{(\ell^2+2\ell-1)!}\binom{(\ell^2+3\ell)/2}{3}\kappa^3 + O(\kappa), \\
   \iota^\ast S_1 & = \frac{1}{\ell^2+2\ell-1}\binom{(\ell^2+3\ell)/2}{1}\kappa g_H
  \end{align*}
  as $\varepsilon\to0$.  Moreover, $H_2=0$ if $\ell=1$ while $H_2>0$ if $\ell\geq2$.
  \item Set $n=\frac{\ell(3\ell+5)}{2}$ and $m=\frac{\ell(3\ell-1)}{2}$.  Then $(X_\varepsilon,g)$ is such that
  \[ \sigma_1 = \frac{3\ell+1}{2}, \qquad \sigma_2 = \frac{\ell(3\ell+2)}{4}, \qquad \sigma_3 \equiv 0, \qquad T_2 > 0 . \]
  Moreover, $g\rv_{T\partial X_\varepsilon}$ is $3$-admissible and the boundary $\partial X_\varepsilon$ is such that $H_3$ is a nonnegative constant, $S_2>0$, and
  \begin{align*}
   H_3 & = \frac{8(3\ell^2+2\ell-5)!}{(3\ell^2+2\ell-2)!}\binom{(3\ell^2+5\ell)/2}{5}\kappa^5 + O(\kappa^3), \\
   \iota^\ast S_2 & = \left(\frac{2(3\ell^2+2\ell-4)!}{(3\ell^2+2\ell-2)!}\binom{(3\ell^2+5\ell)/2}{3}\kappa^3 + O(\kappa)\right)g_H
  \end{align*}
  as $\varepsilon\to0$.  Moreover, $H_3=0$ if $\ell=1$ while $H_3>0$ if $\ell\geq2$.
  \item Set $n=\frac{(\ell+2)(3\ell+1)}{2}$ and $m=\frac{\ell(3\ell+1)}{2}$.  Then $(X_\varepsilon,g)$ is such that
  \[ \sigma_1 = \frac{3\ell+2}{2}, \qquad \sigma_2 = \frac{(\ell+1)(3\ell+1)}{4}, \qquad \sigma_3 \equiv 0, \qquad T_2 > 0 . \]
  Moreover, $g\rv_{T\partial X_\varepsilon}$ is $3$-admissible and the boundary $\partial X_\varepsilon$ is such that $H_3$ is a positive constant, $S_2>0$, and
  \begin{align*}
   H_3 & = \frac{8(3\ell^2+4\ell-4)!}{(3\ell^2+4\ell-1)!}\binom{(3\ell^2+7\ell+2)/2}{5}\kappa^5 + O(\kappa^3), \\
   \iota^\ast S_2 & = \left(\frac{2(3\ell^2+4\ell-3)!}{(3\ell^2+4\ell-1)!}\binom{(3\ell^2+7\ell+2)/2}{3}\kappa^3 + O(\kappa)\right)g_H
  \end{align*}
  as $\varepsilon\to0$.
 \end{enumerate}
\end{lem}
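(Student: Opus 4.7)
The three cases share a common structure, so I outline the strategy uniformly. The first step is to derive the interior statements from Lemma~\ref{lem:einstein_products}: in each case the dimensions $(\dim S^{n+1}, m)$ were chosen precisely to match those in the corresponding clause of that lemma, so it immediately supplies the claimed values of $\sigma_j^g$, the positivity of $T_{k-1}^g$, and the identity $P^g = \tfrac12 g_{S^{n+1}}\oplus(-\tfrac12)g_H$ for the interior Schouten tensor. Local conformal flatness follows from the classical fact that a Riemannian product of two constant-curvature spaces of dimension at least two is conformally flat precisely when their sectional curvatures sum to zero (the remaining degenerate cases being trivial because one factor is one-dimensional). Constancy of $H_k^g$ on $\partial X_\varepsilon$ is immediate from the homogeneity of both boundary factors.

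Next I would compute the boundary tensors. The outward unit normal is the radial vector field in $S^{n+1}$, so the second fundamental form of the geodesic sphere of radius $\varepsilon$ equals $\cot\varepsilon$ times the induced metric on the $TS^n$-block and vanishes on the $TH^m$-block; combined with the formula for $P^g$ this yields
\[
 h^{-1}A = \kappa\,\Id_{TS^n}\oplus 0_{TH^m},\qquad
 h^{-1}\iota^\ast P = \tfrac12\Id_{TS^n}\oplus\bigl(-\tfrac12\bigr)\Id_{TH^m} .
\]
These endomorphisms are simultaneously diagonalized, so the mixed symmetric functions in~\eqref{eqn:Hk_defn} and the Newton polarizations in~\eqref{eqn:Sk_defn} are recovered from the generating identity
\[
 \sigma_p(tB+sC) = [z^p]\bigl(1+(t/2+s\kappa)z\bigr)^n\bigl(1-(t/2)z\bigr)^m .
\]
Because the eigenvalues of $C$ on the $TH^m$-block vanish, $\sigma_{p,j}(B,C)$ is a polynomial in $\kappa$ of degree exactly $p-j$. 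Substituting into~\eqref{eqn:Hk_defn}, the $j=0$ summand dominates as $\kappa\to\infty$ and equals $\frac{(2k-1)!(N-2k)!}{(N-k)!(2k-1)!!}\binom{n}{2k-1}\kappa^{2k-1}$ with $N=\dim X_\varepsilon$; after elementary factorial manipulation this matches the stated leading coefficient, with the remaining terms manifestly of order $O(\kappa^{2k-3})$. An entirely analogous computation for $\iota^\ast S_{k-1}$ shows that the leading contribution arises from the $TH^m$-block of the $j=0$ Newton tensor $T_{2k-3}(h^{-1}A)$ and equals $\binom{n}{2k-3}\kappa^{2k-3}g_H$ times the stated factorial prefactor.

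To verify the sign claims, each block of the endomorphism $h^{-1}S_{k-1}$ is a polynomial in $\kappa$ whose coefficients are expressible through $\binom{n-1}{p}$, $\binom{n}{p}$, and the linear combinations $n-m$, $n-1-m$, $n-2-m$, all of which are strictly positive under the dimensional constraints on $\ell$; hence $S_{k-1}>0$ throughout $(0,\pi/2)$. The analogous expansion of $H_k$ consists of nonnegative summands whose leading coefficient $\binom{n}{2k-1}$ is strictly positive exactly when $n\geq 2k-1$. In case~(3), and in cases~(1) and~(2) with $\ell\geq 2$, this inequality holds and no cancellations occur, giving $H_k>0$ for every $\varepsilon\in(0,\pi/2)$. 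In cases~(1) and~(2) with $\ell=1$, one has $n<2k-1$, and a direct calculation using the generating function above shows that each $\sigma_{2k-j-1,j}(B,C)$ vanishes identically (the necessary cancellations $n-1-m=0$, and its higher analogues, follow from the specific choice of dimensions), giving $H_k\equiv 0$.

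Finally, to establish $k$-admissibility of $g\rv_{T\partial X_\varepsilon}$ I would produce an extension of $w\equiv 0$ as follows. Because $T_{k-1}^g>0$, the linear Dirichlet problem $-\delta(T_{k-1}^g(\nabla\phi))=1$ in $X_\varepsilon$ with $\phi=0$ on $\partial X_\varepsilon$ is uniformly elliptic and admits a unique smooth solution $\phi$, by Lemma~\ref{lem:embedding} applied to the operator $\delta\circ T_{k-1}^g\circ\nabla$. Setting $\cw:=t\phi$ and using Lemma~\ref{lem:conformal_variation_interior} together with $\sigma_k^g\equiv 0$ yields $\sigma_k^{g_{\cw}} = t + O(t^2)>0$ for all sufficiently small $t>0$, while $\sigma_i^{g_{\cw}}>0$ for $i<k$ follows by continuity from the strict positivity of $\sigma_i^g$ given by Lemma~\ref{lem:einstein_products}. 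Hence $g_{\cw}\in\Gamma_k^+$ with $\cw\rv_{\partial X_\varepsilon}=0$. The main technical obstacle is the combinatorial verification that the subleading polynomial coefficients of $H_k$ and $S_{k-1}$ in $\kappa$ carry the required signs for every $\varepsilon\in(0,\pi/2)$ rather than only asymptotically, which must be carried out case-by-case in each of the three dimensional regimes.
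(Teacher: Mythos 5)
Your proposal is correct overall, and its interior steps (invoking Lemma~\ref{lem:einstein_products}, after accounting for the dimension shift $n\mapsto n+1$ for the spherical factor, to obtain $\sigma_j^g$, $T_{k-1}^g>0$, and the block-diagonal Schouten tensor) coincide with the paper. Where you diverge from the paper is in two of the remaining steps.

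For $k$-admissibility, the paper writes down the explicit conformal factor $u(p,q)=\tfrac{1+sr^2(p)}{1+s\varepsilon^2}$, radial in the spherical factor and equal to $1$ on $\partial X_\varepsilon$, computes the Schouten tensor of $u^{-2}g$ to first order in $s$, and checks that $u^{-2}g\in\Gamma_k^+$ for $s>0$ small. You instead solve the auxiliary linear Dirichlet problem $-\delta\bigl(T_{k-1}^g(\nabla\phi)\bigr)=1$ with $\phi\rv_{\partial X_\varepsilon}=0$, then set $\cw=t\phi$ and use Lemma~\ref{lem:conformal_variation_interior} together with $\sigma_k^g\equiv 0$ to get $\sigma_k^{e^{2t\phi}g}=t+O(t^2)>0$ uniformly for $t>0$ small, with $\sigma_i>0$ for $i<k$ preserved by continuity on the compact $X_\varepsilon$. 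This is a valid and more portable argument, trading the explicit Schouten computation for an abstract perturbation; both routes establish the same conclusion. (One subtlety you correctly anticipate: for $k=3$ one must note that the product is locally conformally flat before invoking Lemma~\ref{lem:conformal_variation_interior}; this follows from the $\pm 1$ normalization of the two factors.)

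For the boundary quantities, the paper computes $\sigma_{j,0}$, $T_{j,0}$, $T_{2,1}$, $\sigma_{j,1}$, $\sigma_{3,2}$ explicitly as polynomials in $\kappa$ and assembles $H_2,H_3,S_1,S_2$ from the definitions. Your repackaging via the generating function $\sigma_p(tB+sC)=[z^p]\bigl(1+(t/2+s\kappa)z\bigr)^n\bigl(1-(t/2)z\bigr)^m$ is an equivalent device (keeping in mind that the coefficient of $t^j s^{p-j}$ equals $\binom{p}{j}\sigma_{p,j}$, a combinatorial factor you should track) and makes the degree bound $\deg_\kappa\sigma_{p,j}\leq p-j$ transparent, which is useful for reading off the leading asymptotics as $\varepsilon\to 0$. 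However, you explicitly defer the positivity/vanishing claims (that coefficients such as $n-m-1$, $n-m-2$, $n-m-3$, and $(n-m)^2-3n+m+2$ are nonnegative under the stated constraints, vanishing exactly for $\ell=1$ in cases (1) and (2)) to an unperformed case-by-case check. The paper discharges this by exhibiting those coefficients explicitly, allowing verification by inspection, so your proof is correct modulo that routine but unstated arithmetic.
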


\begin{proof}
 The claims about the $\sigma_k$-curvatures and the Newton tensors follow from Lemma~\ref{lem:einstein_products}.

 In terms of the coordinates $(r,\vartheta)\in(0,\varepsilon)\times S^n$ on $S_\varepsilon^{n+1}\setminus r^{-1}(0)$, we may write the metric $g$ on $X_\varepsilon$ as
 \[ g = dr^2 \oplus \sin^2r\,d\vartheta^2 \oplus g_H . \]
 Fix $s\in\bR_+$ and define $u\colon S_\varepsilon^{n+1}\times H\to\bR$ by
 \[ u(p,q) = \frac{1+sr^2(p)}{1+s\varepsilon^2} . \]
 Set $g_u:=u^{-2}g$.  A straightforward computation shows that
 \[P^{g_u} = \frac{1+4s}{2}dr^2 \oplus \frac{\sin^2 r+4sr\sin r\cos r}{2}d\vartheta^2 \oplus \left(-\frac{1}{2}\right)g_H + O(s^2) \]
 for $s$ close to zero.  
 Thus
 \[{g_u}^{-1} P^{g_u} =u^2\left[ \frac{1+4s}{2} Id_{T\bR^+} + \frac{1+4sr\cot r}{2}Id_{TS^n} + \left(-\frac{1}{2}\right)Id_{TH^m} + O(s^2)\right]\]
 Therefore
 \[u^{-2k}\sigma_k^{g_u} = \sigma_k^g + 2^{2-k}s\sum_{j=0}^{k-1}(-1)^j\binom{n}{k-1-j}\binom{m}{j}\left(1+nr\cot r\right) + O(s^2) . \]
 It follows that $g_u\in\Gamma_2^+$ for $s$ sufficiently close to zero in Case (1), and that $g_u\in\Gamma_3^+$ for $s$ sufficiently close to zero in Cases (2) and (3).  Thus $g\rv_{T\partial X_\varepsilon}$ is $2$- or $3$-admissible, as appropriate.

 Note that the second fundamental form of $\partial X_\varepsilon$ is $A=\kappa\overline{d\theta}^2$ for $\overline{d\theta}^2=d\vartheta^2\rv_{T\partial S_\varepsilon^{n+1}}$.  For any $j\in\bN$, we compute that
 \begin{align*}
  \sigma_{j,0} & = \binom{n}{j}\kappa^j, \\
  T_{j,0} & = \binom{n-1}{j}\kappa^j\overline{d\theta}^2 \oplus \binom{n}{j}\kappa^jg_{H}
 \end{align*}
 for any $j\in\bN$. We also compute that
 \begin{align*}
  T_{2,1} & = \frac{(n-1)(n-m-2)}{4}\kappa\overline{d\theta}^2 \oplus \frac{n(n-m)}{4}\kappa g_H, \\
  \sigma_{j,1} & = \frac{n-m+1-j}{2j}\binom{n}{j-1}\kappa^{j-1}, \\
  \sigma_{3,2} & = \frac{n(n^2-2mn+m^2-3n+m+2)}{24}\kappa .
 \end{align*}
 By definition,
 \begin{align*}
  H_2 & = \frac{2}{(m+n-1)(m+n-2)}\sigma_{3,0} + \frac{2}{m+n-1}\sigma_{2,1} , \\
  H_3 & = \frac{8}{(m+n-2)(m+n-3)(m+n-4)}\sigma_{5,0} + \frac{8}{(m+n-2)(m+n-3)}\sigma_{4,1} \\
   & \qquad + \frac{3}{m+n-2}\sigma_{3,2},
 \end{align*}
 and
 \begin{align*}
  S_1 & = \frac{1}{m+n-1}T_{1,0}, \\
  S_2 & = \frac{2}{(m+n-2)(m+n-3)}T_{3,0} + \frac{2}{m+n-2}T_{2,1} .
 \end{align*}
 Combining these formulae yield the claimed conclusions for $H_2,H_3$ and $S_1,S_2$.
\end{proof}

By applying Theorem~\ref{thm:bifurcation} to the examples of Lemma~\ref{lem:lcf_computations}, we obtain examples with boundary geometry the Riemannian product of a sphere and a hyperbolic manifold of varying sizes which contains infinitely many bifurcation points for~\eqref{eqn:constant_Hk_problem}.  Restricting these examples to the boundary proves Theorem~\ref{thm:nonuniqueness_neg}.

\begin{thm}
 \label{thm:lcf}
 Let $(S_\varepsilon^{n+1},d\theta^2)$, $\varepsilon\in(0,\pi/2)$ be a spherical cap, let $(H^m,g_H)$ be a compact hyperbolic manifold, and denote by $(X_\varepsilon,g)$ their Riemannian product.  Let $k\in\{2,3\}$ and suppose additionally that
 \begin{enumerate}
  \item $k=2$, $n=\frac{\ell(\ell+3)}{2}$, $m=\frac{\ell(\ell+1)}{2}$, and $\ell\geq2$;
  \item $k=3$, $n=\frac{\ell(3\ell+5)}{2}$, $m=\frac{\ell(3\ell-1)}{2}$, and $\ell\geq2$; or
  \item $k=3$, $n=\frac{(\ell+2)(3\ell+1)}{2}$, and $m=\frac{\ell(3\ell+1)}{2}$.
 \end{enumerate}
 Then, $(X_\varepsilon,g)$ is a solution of~\eqref{eqn:constant_Hk_problem} for all $\varepsilon\in(0,\pi/2)$.  Moreover, there is a sequence $(\varepsilon_j)_j\subset(0,\pi/2)$ of bifurcation points for~\eqref{eqn:constant_Hk_problem} for which $\varepsilon_j\to0$ as $j\to\infty$.
\end{thm}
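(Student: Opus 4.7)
The plan is to apply Corollary~\ref{cor:bifurcation} to the family $\{(X_\varepsilon, g)\}_{\varepsilon \in (0, \pi/2)}$. Since the product of a round sphere of curvature $1$ and a hyperbolic manifold of curvature $-1$ is locally conformally flat, and Lemma~\ref{lem:lcf_computations} supplies $\sigma_k^g \equiv 0$, $T_{k-1}^g > 0$, $S_{k-1} > 0$, constant $H_k$-curvature, and $k$-admissibility of $g|_{T\partial X_\varepsilon}$ for every $\varepsilon \in (0, \pi/2)$, each $(X_\varepsilon, g)$ solves~\eqref{eqn:constant_Hk_problem} and hypotheses (1) and (2) of the corollary hold. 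It remains to produce disjoint intervals $[a_j, b_j]$ accumulating at $0$ on which the Jacobi operator is nondegenerate at the endpoints on $\bR^\perp$ and its index jumps.

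I would analyse $D\mF^{g_\varepsilon}$ by separation of variables on the boundary $\partial X_\varepsilon \cong S^n(\sin\varepsilon) \times H^m$. Because $T_{k-1}^g$ and $\iota^\ast S_{k-1}^g$ are sums of positive multiples of the two factor metrics, $D\mF^{g_\varepsilon}$ commutes with both factor Laplacians and diagonalises on joint Laplace eigenspaces. For eigenvalues $\lambda_S, \lambda_H$ and boundary data $\phi = \phi_S \otimes \phi_H$, the extension problem~\eqref{eqn:linearized_extension} separates into a linear second-order radial ODE on $[0, \varepsilon]$ for $U(r)$ with $U'(0)=0$, $U(\varepsilon)=1$, and $D\mF^{g_\varepsilon}$ acts on this mode by multiplication by
\[ \mu(\lambda_S, \lambda_H; \varepsilon) = N(\lambda_S, \lambda_H; \varepsilon) + \alpha(\varepsilon) \lambda_S + \beta(\varepsilon) \lambda_H - (2k-1)\mH_k(\varepsilon), \]
where $\alpha, \beta > 0$ are the eigenvalues of $\iota^\ast S_{k-1}^g$ on the two factors and $N$ is the Dirichlet-to-Neumann value arising from the ODE.

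I would then specialise to the pure-hyperbolic modes $\lambda_S = 0$, on which the radial ODE reduces to $c_M(U'' + n\cot r \cdot U') = c_H \lambda_H U$ with positive constants $c_M, c_H$. An elementary shrinking-interval rescaling gives $N(0, \lambda_H; \varepsilon) = O(\varepsilon)$ as $\varepsilon \to 0$, while Lemma~\ref{lem:lcf_computations} yields $\beta(\varepsilon) \asymp \kappa^{2k-3}$ and $\mH_k(\varepsilon) \asymp \kappa^{2k-1}$ with $\kappa = \cot \varepsilon$. Hence $\mu(0, \lambda_H; \varepsilon) \to -\infty$ for every fixed $\lambda_H > 0$, and $\mu(0, \lambda_H; \varepsilon) = 0$ has solution $\kappa \asymp \sqrt{\lambda_H}$. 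Letting $\lambda_H$ range over the discrete spectrum of $\Delta_H$, the corresponding zero crossings accumulate at $\varepsilon = 0$, and the $\bR^\perp$-index $\Ind(D\mF^{g_\varepsilon})$ grows unboundedly as $\varepsilon \to 0$. Each $\mu(\lambda_S, \lambda_H; \cdot)$ is real-analytic and only finitely many modes can contribute zeros to a given neighbourhood of any such crossing, so the full degeneracy set is locally discrete; choosing $a_j < b_j$ just outside a nested sequence of crossings gives endpoints that are nondegenerate on $\bR^\perp$ with distinct indices. Corollary~\ref{cor:bifurcation} then produces bifurcation instants $\varepsilon_j \in (a_j, b_j)$ accumulating at $0$.

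The main obstacle is the nonlocal Dirichlet-to-Neumann term $N$: for the spherical modes, the rescaling $\lambda_S \asymp \kappa^2$ on $S^n(\sin\varepsilon)$ makes $\alpha \lambda_S$ comparable to $\mH_k$, so one cannot a priori conclude that the negative potential $-(2k-1)\mH_k$ dominates on every mode. The proposed restriction to $\lambda_S = 0$ circumvents this difficulty because the ODE is then an explicit Bessel-type equation on a shrinking interval and $N$ is easily estimated, giving the dominant balance needed to drive infinitely many index jumps near $\varepsilon = 0$.
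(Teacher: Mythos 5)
Your proposal is correct and takes essentially the same route as the paper: both restrict to functions $\pi^\ast\phi$ pulled back from the hyperbolic factor, observe that the Dirichlet-to-Neumann contribution from the extension problem is controlled (the paper settles for $O(1)$ where you obtain the sharper $O(\varepsilon)$), and then use the asymptotics $\iota^\ast S_{k-1}\asymp\kappa^{2k-3}g_H$ and $\mH_k\asymp\kappa^{2k-1}$ from Lemma~\ref{lem:lcf_computations} to conclude that the potential term $-(2k-1)\mH_k$ eventually dominates on each fixed $H^m$-mode, driving $\Ind(D\mF^{g_\varepsilon})\to\infty$ and enabling Corollary~\ref{cor:bifurcation}. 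Your extra paragraph on the analyticity/discreteness of the degeneracy set and the choice of nondegenerate endpoints makes explicit a step the paper leaves implicit, but this does not change the mechanism.
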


\begin{proof}
 Applying Lemma~\ref{lem:lcf_computations} to $(X_\varepsilon,g)$ implies that, $(X_\varepsilon,g)$ is a solution of~\eqref{eqn:constant_Hk_problem} for all $\varepsilon\in(0,\pi/2)$.  Lemma~\ref{lem:lcf_computations} further implies that there are constants $c_1,c_2>0$ such that $\iota_2^\ast S_{k-1}=c_1\varepsilon^{3-2k}g_H+O(\varepsilon^{5-2k})$ and $H_k=c_2\varepsilon^{1-2k}+O(\varepsilon^{3-2k})$ as $\varepsilon\to0$, where $\iota\colon H^m\to\partial X_\varepsilon$ is the inclusion map.  Let $\pi\colon \partial X_\varepsilon\to H^m$ denote the projection map.  It is readily seen that for all $\phi\in C^\infty(H^m)$, the extension $v_\phi$ of $\pi^\ast\phi$ to $X_\varepsilon$ by~\eqref{eqn:linearized_extension} is of the form $v_\phi(p,q)=f\left(r(p)\right)\phi(q)$.  It readily follows that $T_{k-1}(\eta,\nabla v_\phi)=O(1)$ as $\varepsilon\to0$.  Thus
 \[ D\mF^g(\pi^\ast\phi) = \pi^\ast\left[ -\delta_{g_H}\left((\iota^\ast S_{k-1})(\onabla\phi)\right) - (2k-1)(\iota^\ast H_k)\phi\right] + O(1) . \]
 for all $\phi\in C^\infty(H^m)$.  The asymptotic behavior of $\iota^\ast S_{k-1}$ and $H_k$ imply that the index of the restriction of $D\mF$ to $H^m$, and hence the index of $D\mF$ itself, tends to $\infty$ as $\varepsilon\to0$.  Corollary~\ref{cor:bifurcation} then yields, up to scaling, the existence of the sequence $(\varepsilon_j)_j$ of bifurcation points.
\end{proof}

\subsection{Products of two spheres}
\label{subsec:example/lcf2}

In this subsection we apply Theorem~\ref{thm:bifurcation} to products of a round sphere and a small geodesic ball in hyperbolic space with sectional curvature $1$ and $-1$, respectively.  This normalization ensures that the product is locally conformally flat, allowing us to apply Theorem~\ref{thm:bifurcation} for general $k$.  Moreover, we choose the dimensions of the factors so that Lemma~\ref{lem:einstein_products} applies.  Our first task is to study the geometry of the boundary of these products.

\begin{lem}
 \label{lem:lcf_computations2}
 Given $n,m\in\bN$, denote by $(S^n,d\theta^2)$ and $(H^{m+1},g_H)$ the round $n$-sphere of constant sectional curvature $1$ the $(m+1)$-dimensional simply connected manifold of constant sectional curvature $-1$, respectively.  Given $\varepsilon\in(0,\pi/2)$, set
 \[ H_\varepsilon^{m+1} = \left\{ x\in H^{m+1} \suchthat r(x)\leq\varepsilon \right\}, \]
 where $r$ is the geodesic distance from a fixed point $p\in H^{m+1}$.  Let $(X_\varepsilon^{m+n+1},g)$ denote the Riemannian product of $(S^n,d\theta^2)$ and $(H_\varepsilon^{m+1},g_H)$, and let $\iota$ denote the inclusion of $S^n$ into $\partial X_\varepsilon$.  Let $\kappa=\coth\varepsilon$ denote the mean curvature of $\partial H_\varepsilon^{m+1}$ in $H_\varepsilon^{m+1}$ and let $\ell\in\bN$.
 \begin{enumerate}
  \item Set $n=\frac{(\ell+1)(\ell+2)}{2}$ and $m=\frac{(\ell-1)(\ell+2)}{2}$ for $\ell\geq 2$.  Then $(X_{\varepsilon},g)$ is such that
  \[ \sigma_1 = \frac{\ell+1}{2}, \qquad \sigma_2 \equiv 0, \qquad T_1 > 0 . \]
  Moreover, $g\rv_{T\partial X_\varepsilon}$ is $2$-admissible and the boundary $\partial X_\varepsilon$ is such that $H_2$ is a positive constant, $S_1>0$, and
  \begin{align*}
   H_2 & = \frac{2(\ell^2+2\ell-3)!}{(\ell^2+2\ell-1)!}\binom{(\ell-1)(\ell+2)/2}{3}\kappa^3 + O(\kappa), \ \ \mbox{if}\ \ell\geq 3, \\
   \iota^\ast S_1 & = \frac{1}{\ell^2+2\ell-1}\binom{(\ell-1)(\ell+2)/2}{1}\kappa\,d\theta^2
  \end{align*}
  as $\varepsilon\to0$.  
  \item Set $n=\frac{(\ell+1)(3\ell+2)}{2}$ and $m=\frac{(\ell-1)(3\ell+2)}{2}$ for $\ell\geq 2$.  Then $(X_\varepsilon,g)$ is such that
  \[ \sigma_1 = \frac{3\ell+1}{2}, \qquad \sigma_2 = \frac{\ell(3\ell+2)}{4}, \qquad \sigma_3 \equiv 0, \qquad T_2 > 0 . \]
  Moreover, $g\rv_{T\partial X_\varepsilon}$ is $3$-admissible and the boundary $\partial X_\varepsilon$ is such that $H_3$ is a positive constant, $S_3>0$, and
  \begin{align*}
   H_3 & = \frac{8(3\ell^2+2\ell-5)!}{(3\ell^2+2\ell-2)!}\binom{(\ell-1)(3\ell+2)/2}{5}\kappa^5 + O(\kappa^3), \ \ \mbox{if}\ \ell\geq 3,\\
   \iota^\ast S_2 & = \left(\frac{2(3\ell^2+2\ell-4)!}{(3\ell^2+2\ell-2)!}\binom{(\ell-1)(3\ell+2)/2}{3}\kappa^3 + O(\kappa)\right)\,d\theta^2
  \end{align*}
  as $\varepsilon\to0$. 
  \item Set $n=\frac{(\ell+1)(3\ell+4)}{2}$ and $m=\frac{(\ell+1)(3\ell-2)}{2}$.  Then $(X_\varepsilon,g)$ is such that
  \[ \sigma_1 = \frac{3\ell+2}{2}, \qquad \sigma_2 = \frac{(\ell+1)(3\ell+1)}{4}, \qquad \sigma_3 \equiv 0, \qquad T_2 > 0 . \]
  Moreover, $g\rv_{T\partial X_\varepsilon}$ is $3$-admissible and the boundary $\partial X_\varepsilon$ is such that $H_3$ is a positive constant, $S_3>0$, and
  \begin{align*}
   H_3 & = \frac{8(3\ell^2+4\ell-4)!}{(3\ell^2+4\ell-1)!}\binom{(\ell+1)(3\ell-2)/2}{5}\kappa^5 + O(\kappa^3), \ \ \mbox{if}\ \ell\geq 2, \\
   \iota^\ast S_2 & = \left(\frac{2(3\ell^2+4\ell-3)!}{(3\ell^2+4\ell-1)!}\binom{(\ell+1)(3\ell-2)/2}{3}\kappa^3 + O(\kappa)\right)\,d\theta^2
  \end{align*}
  as $\varepsilon\to0$.
 \end{enumerate}
\end{lem}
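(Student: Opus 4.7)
The proof would follow the same scheme as Lemma~\ref{lem:lcf_computations}, simply swapping the roles of the sphere and hyperbolic factors so that the boundary lies on the hyperbolic side rather than the spherical side. The interior statements about $\sigma_j^g$ and $T_{k-1}^g$ in each of the three cases are immediate from Lemma~\ref{lem:einstein_products}: the dimensional pairs $(n,m)$ listed here are exactly those pairs in Lemma~\ref{lem:einstein_products} with the two dimensions interchanged, and for a Riemannian product of Einstein manifolds the Schouten tensor and all $\sigma_k$'s are insensitive to which factor is labelled first.

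To verify admissibility, I would construct the conformal factor $u(p,q) := (1+sr^2(q))/(1+s\varepsilon^2)$ depending only on the hyperbolic geodesic distance $r(q)$ from the center of $H_\varepsilon^{m+1}$, and check that $g_u := u^{-2}g \in \Gamma_k^+$ for all sufficiently small $s>0$ while $g_u|_{T\partial X_\varepsilon} = g|_{T\partial X_\varepsilon}$. The Schouten tensor $P^{g_u}$ is computed from the standard conformal change formula, and expanding $\sigma_k^{g_u}$ to first order in $s$ yields a perturbation of $\sigma_k^g \equiv 0$ whose coefficient, when the $\sigma_j^g$ for $j<k$ are positive as in Lemma~\ref{lem:einstein_products}, has strict sign. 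This is the exact analogue of the argument used in the proof of Lemma~\ref{lem:lcf_computations}, with $\cos r\cot r$ replaced by the hyperbolic quantity $r\coth r$ coming from the Hessian of $r^2$ in $H^{m+1}$.

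For the boundary quantities, I would work in geodesic polar coordinates $(r,\phi)\in(0,\varepsilon)\times S^m$ on $H_\varepsilon^{m+1}$, in which $g_H = dr^2 + \sinh^2 r\,d\phi^2$. The outward unit normal to $\partial X_\varepsilon$ is $\eta = \partial_r$, so the second fundamental form of $\partial X_\varepsilon$ vanishes on the $TS^n$-summand and equals $\kappa = \coth\varepsilon$ times the induced metric on the geodesic sphere $\partial H_\varepsilon^{m+1}$ on the complementary summand. Writing $h := \iota^\ast g = d\theta^2 \oplus \sinh^2\varepsilon\,d\phi^2$ and $\iota^\ast P = \tfrac12 d\theta^2 \oplus (-\tfrac12)\sinh^2\varepsilon\,d\phi^2$, I would then compute the polarized symmetric functions $\sigma_{j,\ell}(h^{-1}\iota^\ast P,h^{-1}A)$ and $T_{j,\ell}(h^{-1}\iota^\ast P,h^{-1}A)$. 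The crucial difference from Lemma~\ref{lem:lcf_computations} is that $h^{-1}A$ now has rank $m$ supported on the $\partial H_\varepsilon^{m+1}$-directions rather than rank $n$ supported on the $\partial S_\varepsilon^{n+1}$-directions, so pure-$A$ contributions carry factors $\binom{m}{j}\kappa^j$ in place of $\binom{n}{j}\kappa^j$. Substituting into \eqref{eqn:Hk_defn} and \eqref{eqn:Sk_defn} and reading off the top-degree term in $\kappa$ yields the stated asymptotics, and constancy of $H_k$ follows since all ingredients are constant on $\partial X_\varepsilon$.

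The main obstacle is bookkeeping: one must carry out the full combinatorial expansion of the mixed symmetric functions on a two-factor product where one factor additionally splits off the normal direction, and in particular verify that the subleading terms in $\kappa$ really are one order lower than asserted rather than canceling to produce spurious low-order behavior. The hypotheses $\ell\geq 2$ in Cases (1) and (3) and $\ell\geq 2$ (with the sharper $\ell\geq 3$ needed only for the nontriviality of the leading coefficient of $H_2$, $H_3$) in Case (2) correspond exactly to the positivity of the binomial coefficients $\binom{m}{2k-1}$ and $\binom{m}{2k-3}$ that appear in the leading terms of $H_k$ and $\iota^\ast S_{k-1}$, respectively; these are checked case by case.
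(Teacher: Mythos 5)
Your proposal follows essentially the same approach as the paper's proof: invoke Lemma~\ref{lem:einstein_products} for the interior quantities, construct the conformal factor $u=(1+sr^2)/(1+s\varepsilon^2)$ depending only on the hyperbolic radial coordinate to establish admissibility, and compute the boundary invariants in geodesic polar coordinates using that the second fundamental form of $\partial X_\varepsilon$ equals $\kappa$ times the induced metric on $\partial H_\varepsilon^{m+1}$. The only small slip is the recollection that Lemma~\ref{lem:lcf_computations} involves $\cos r\cot r$ --- the relevant quantity there is $r\cot r$, replaced here by $r\coth r$ --- which does not affect the substance of the argument.
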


\begin{proof}
 The claims about the $\sigma_k$-curvatures and the Newton tensors follow from Lemma~\ref{lem:einstein_products}.

 In terms of the coordinates $(r,\vartheta)\in(0,\varepsilon)\times S^m$ on $H_\varepsilon^{m+1}\setminus r^{-1}(0)$, we may write the metric $g$ on $X_\varepsilon$ as
 \[ g = d\theta^2 \oplus dr^2 \oplus \sinh^2r\,d\vartheta^2 . \]
 Fix $s\in\bR_+$ and define $u\colon S^n \times H_\varepsilon^{m+1}\to\bR$ by
 \[ u(p,q) = \frac{1+sr^2(q)}{1+s\varepsilon^2} . \]
 Set $g_u:=u^{-2}g$.  A straightforward computation shows that
 \[ P^{g_u} = \frac{1}{2}d\theta^2 \oplus \frac{4s-1}{2}dr^2 \oplus \frac{4sr\cosh r\sinh r-\sinh^2 r}{2}d\vartheta^2 + O(s^2) \]
 for $s$ close to zero.  
 Thus 
 \[ g_u^{-1}P^{g_u} =u^{2}\left [\frac{1}{2}Id_{TS^n} + \frac{4s-1}{2} Id_{T\mathbb R^+} + \frac{4sr\coth r-1}{2}Id_{TH^m} + O(s^2)\right ]\]
 Therefore
 \[ u^{-2k}\sigma_k^{g_u} = \sigma_k^g + 2^{2-k}s\sum_{j=0}^{k-1}(-1)^{k-1-j}\binom{m}{k-1-j}\binom{n}{j}\left(1+mr\coth r\right) + O(s^2) . \]
 It follows that $g_u\in\Gamma_2^+$ for $s$ sufficiently close to zero in Case (1), and that $g_u\in\Gamma_3^+$ for $s$ sufficiently close to zero in Cases (2) and (3).  Thus $g\rv_{T\partial X_\varepsilon}$ is $2$- or $3$-admissible, as appropriate.

 Note that the second fundamental form of $\partial X_\varepsilon$ is $A=\kappa\overline{g_H}$ for $\overline{g_H}=g_H\rv_{T\partial H_\varepsilon^{m+1}}$.  For any $j\in\bN$, we compute that
 \begin{align*}
  \sigma_{j,0} & = \binom{m}{j}\kappa^j, \\
  T_{j,0} & = \binom{m}{j}\kappa^jd\theta^2 \oplus \binom{m-1}{j}\kappa^j\overline{g_H}
 \end{align*}
 for any $j\in\bN$.  We also compute that
 \begin{align*}
  T_{2,1} & = \frac{m(n-m)}{4}\kappa d\theta^2 \oplus \frac{(m-1)(n-m+2)}{4}\kappa\overline{g_H}, \\
  \sigma_{j,1} & = \frac{n-m-1+j}{2j}\binom{m}{j-1}\kappa^{j-1}, \\
  \sigma_{3,2} & = \frac{m(n^2-2mn+m^2+n-3m+2)}{24}\kappa .
 \end{align*}
 By definition,
 \begin{align*}
  H_2 & = \frac{2}{(m+n-1)(m+n-2)}\sigma_{3,0} + \frac{2}{m+n-1}\sigma_{2,1} , \\
  H_3 & = \frac{8}{(m+n-2)(m+n-3)(m+n-4)}\sigma_{5,0} + \frac{8}{(m+n-2)(m+n-3)}\sigma_{4,1} \\
   & \qquad + \frac{3}{m+n-2}\sigma_{3,2},
 \end{align*}
 and
 \begin{align*}
  S_1 & = \frac{1}{m+n-1}T_{1,0}, \\
  S_2 & = \frac{2}{(m+n-2)(m+n-3)}T_{3,0} + \frac{2}{m+n-2}T_{2,1} .
 \end{align*}
 Combining these formulae yield the claimed conclusions for $H_2,H_3$ and $S_1,S_2$.
\end{proof}

By applying Theorem~\ref{thm:bifurcation} to the examples of Lemma~\ref{lem:lcf_computations2}, we obtain examples with boundary geometry the Riemannian product of two spheres of varying sizes which contains infinitely many bifurcation points for~\eqref{eqn:constant_Hk_problem}.  Restricting these examples to the boundary proves Theorem~\ref{thm:nonuniqueness_pos}.

\begin{thm}
 \label{thm:lcf2}
 Let $(S^n,d\theta^2)$ be a round sphere with constant sectional curvature $1$ and let $(H_\varepsilon^{m+1},g_H)$, $\varepsilon\in\bR_+$, be a geodesic ball in hyperbolic space of constant sectional curvature $-1$.  Denote by $(X_\varepsilon,g)$ their Riemannian product.  Let $k\in\{2,3\}$ and suppose additionally that
 \begin{enumerate}
  \item $k=2$, $n=\frac{(\ell+1)(\ell+2)}{2}$, $m=\frac{(\ell-1)(\ell+2)}{2}$, and $\ell\geq3$;
  \item $k=3$, $n=\frac{(\ell+1)(3\ell+2)}{2}$, $m=\frac{(\ell-1)(3\ell+2)}{2}$, and $\ell\geq3$; or
  \item $k=3$, $n=\frac{(\ell+1)(3\ell+4)}{2}$, $m=\frac{(\ell+1)(3\ell-2)}{2}$ and $\ell\geq 2$.
 \end{enumerate}
 Then, $(X_\varepsilon,g)$ is a solution of~\eqref{eqn:constant_Hk_problem} for all $\varepsilon\in\bR_+$.  Moreover, there is a sequence $(\varepsilon_j)_j\subset\bR_+$ of bifurcation points for~\eqref{eqn:constant_Hk_problem} for which $\varepsilon_j\to0$ as $j\to\infty$.
\end{thm}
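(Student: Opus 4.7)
The plan is to prove Theorem~\ref{thm:lcf2} by the same structural argument used in Theorem~\ref{thm:lcf}, with the roles of the two Einstein factors reversed: now the positive Einstein factor $S^n$ sits entirely in the boundary $\partial X_\varepsilon$, while the negative factor $H_\varepsilon^{m+1}$ supplies the interior radial direction.

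First I would verify that $(X_\varepsilon,g)$ solves~\eqref{eqn:constant_Hk_problem} for every $\varepsilon\in\bR_+$. Lemma~\ref{lem:lcf_computations2} supplies $\sigma_k^g\equiv 0$ in the interior, $T_{k-1}^g>0$, $g\in\overline{\Gamma_k^+}$, positive definiteness of $S_{k-1}^g$, $k$-admissibility of $g\rv_{T\partial X_\varepsilon}$, and the fact that $H_k^g$ is a positive constant. In each of the three listed cases the two factors have constant sectional curvature $\pm1$, so the product $g$ is locally conformally flat. This handles the first assertion.

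Next I would analyze the index of the Jacobi operator $D\mF^{g_\varepsilon}$ as $\varepsilon\to 0$, testing it against pullbacks from the spherical factor. Let $\iota\colon S^n\hookrightarrow\partial X_\varepsilon$ denote the inclusion of Lemma~\ref{lem:lcf_computations2} and let $\pi\colon\partial X_\varepsilon\to S^n$ be the projection. By Lemma~\ref{lem:lcf_computations2}, there are positive constants $c_1,c_2$ (depending only on $\ell$ and $k$) with $\iota^\ast S_{k-1}=c_1\kappa^{2k-3}\,d\theta^2+O(\kappa^{2k-5})$ and $H_k=c_2\kappa^{2k-1}+O(\kappa^{2k-3})$ as $\varepsilon\to 0$. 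For any $\phi\in C^\infty(S^n)$, the block-diagonal, constant-coefficient form of $T_{k-1}^g$ given by Lemma~\ref{lem:einstein_products} implies that the unique extension $v_\phi$ of $\pi^\ast\phi$ via~\eqref{eqn:linearized_extension} takes the separated form $v_\phi(p,q)=\phi(p)f_\phi(r(q))$, where $r(q)$ is hyperbolic distance in $H_\varepsilon^{m+1}$ and $f_\phi$ solves a linear ODE on $(0,\varepsilon)$ with a regular singular point at $r=0$ and the boundary condition $f_\phi(\varepsilon)=1$. Standard analysis of this ODE gives $f_\phi'(\varepsilon)\to 0$ as $\varepsilon\to 0$, whence $T_{k-1}^g(\eta,\nabla v_\phi)=O(1)$.

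Combining these facts with Corollary~\ref{cor:boundary_dF} yields
\[
D\mF^{g_\varepsilon}(\pi^\ast\phi)=\pi^\ast\left[-\delta_{d\theta^2}\bigl((\iota^\ast S_{k-1})(\onabla\phi)\bigr)-(2k-1)(\iota^\ast H_k)\phi\right]+O(1).
\]
On a $-\Delta_{S^n}$-eigenfunction $\phi$ with eigenvalue $\mu$, the leading behavior of $D\mF^{g_\varepsilon}(\pi^\ast\phi)$ is $\bigl(c_1\mu\kappa^{2k-3}-(2k-1)c_2\kappa^{2k-1}\bigr)\phi$, which is strictly negative whenever $\mu<(2k-1)c_2\kappa^2/c_1$. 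Since the multiplicity on $S^n$ of eigenvalues satisfying this bound grows without bound as $\kappa=\coth\varepsilon\to\infty$, we conclude that $\Ind\bigl(D\mF^{g_\varepsilon}\bigr)\to\infty$. Being integer-valued and locally constant away from the discrete set of degenerate parameters, this index must jump at infinitely many values of $\varepsilon$ accumulating at $0$; at each such jump, choosing nondegenerate endpoints and invoking Corollary~\ref{cor:bifurcation} produces a bifurcation instant, yielding the desired sequence $(\varepsilon_j)_j$ with $\varepsilon_j\to 0$. The main technical step to verify carefully is the uniform-in-$\mu$ ODE estimate $f_\phi'(\varepsilon)=O(1)$ as $\varepsilon\to 0$, but this is entirely analogous to the estimate used implicitly in the proof of Theorem~\ref{thm:lcf}.
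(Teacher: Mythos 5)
Your proposal is correct and follows essentially the same argument as the paper: verify $(X_\varepsilon,g)$ solves the boundary value problem via Lemma~\ref{lem:lcf_computations2}, separate variables to extend pullbacks $\pi^\ast\phi$ from $S^n$, use the asymptotics of $\iota^\ast S_{k-1}$ and $H_k$ to show $\Ind(D\mF^{g_\varepsilon})\to\infty$ as $\varepsilon\to 0$, and conclude via Corollary~\ref{cor:bifurcation}. Your extra remarks spelling out why a divergent index forces infinitely many bifurcation instants, and your caution about uniformity of the ODE estimate, are reasonable amplifications of steps the paper leaves implicit, but they do not change the route.
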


\begin{proof}
 Applying Lemma~\ref{lem:lcf_computations2} to $(X_\varepsilon,g)$ implies that, $(X_\varepsilon,g)$ is a solution of~\eqref{eqn:constant_Hk_problem} for all $\varepsilon\in(0,\pi/2)$.  Lemma~\ref{lem:lcf_computations2} further implies that there are constants $c_1,c_2>0$ such that $\iota^\ast S_{k-1}=c_1\varepsilon^{3-2k}d\theta^2+O(\varepsilon^{5-2k})$ and $H_k=c_2\varepsilon^{1-2k}+O(\varepsilon^{3-2k})$ as $\varepsilon\to0$, where $\iota\colon S^n\to \partial X_\varepsilon$ is the inclusion map.  Let $\pi\colon \partial X_\varepsilon\to S^n$ denote the projection map.  It is readily seen that for all $\phi\in C^\infty(S^n)$, the extension $v_\phi$ of $\pi^\ast\phi$ to $X_\varepsilon$ by~\eqref{eqn:linearized_extension} is of the form $v_\phi(p,q)=f\left(r(q)\right)\phi(p)$.  It readily follows that $T_{k-1}(\eta,\nabla v_\phi)=O(1)$ as $\varepsilon\to0$.  Thus
 \[ D\mF^g(\pi^\ast\phi) = \pi^\ast\left[ -\delta_{d\theta^2}\left((\iota^\ast S_{k-1})(\onabla\phi)\right) - (2k-1)(\iota^\ast H_k)\phi\right] + O(1) . \]
 for all $\phi\in C^\infty(S^n)$.  The asymptotic behavior of $\iota^\ast S_{k-1}$ and $H_k$ imply that the index of the restriction of $D\mF$ to $S^n$, and hence the index of $D\mF$ itself, tends to $\infty$ as $\varepsilon\to0$.  Corollary~\ref{cor:bifurcation} then yields, the existence of the sequence $(\varepsilon_j)_j$ of bifurcation points.
\end{proof}

\subsection{Products of a sphere and a Ricci flat manifold}
\label{subsec:example/flat}

In this subsection we apply Theorem~\ref{thm:bifurcation} to products of a small domain in Einstein warped products of the form $\bR^2\times_f F^{m-1}$ and an $n$-sphere.  For convenience, we normalize the Einstein constants of each factors so that we may apply Lemma~\ref{lem:einstein_products}.  Since $\mathbb R^2\times_f F^{m-1}$ is not locally conformally flat, the product is not either, and so we restrict to the case $k=2$.  In particular, one could dispense with the dimensional constraints of Lemma~\ref{lem:einstein_products} (cf.\ Remark~\ref{rk:genl_normalization}).

We first study the geometry of the boundary of these products.  To that end, given a compact manifold $(X^{n+1},g)$ with boundary $M:=\partial X$ for which $T_1>0$, we denote by $\mD\colon C^{j,\alpha}(M)\to C^{j-1,\alpha}(M)$ the Dirichlet-to-Neumann operator
\begin{equation}
 \label{eqn:dirichlet_to_neumann}
 \mD(\phi) := T_1(\eta,\nabla v_\phi),
\end{equation}
where $\eta$ is the outward-pointing unit normal along $M$ and $v_\phi$ is the extension of $\phi$ by~\eqref{eqn:linearized_extension}.

\begin{lem}
 \label{lem:flat_computations}
 Fix $2\leq\ell\in\bN$ and set $n=\frac{(\ell+1)(\ell+2)}{2}$ and $m=\frac{(\ell-1)(\ell+2)}{2}$.  Denote by $(S^n,d\theta^2)$ the round $n$-sphere of constant sectional curvature and let $(F^{m-1},g_F)$ be a Ricci flat manifold.  Given $R\in\bR_+$, denote $N_R^{m+1}:=B_R(0)\times F^{m-1}$, where $B_R(0):=\left\{x\in\bR^2\suchthat \lv x\rv^2\leq R\right\}$, and by
 \begin{equation}
  \label{eqn:flat_einstein_metric}
  g_N := dr^2 \oplus \left(\frac{2}{m}\cosh^{\frac{2-m}{m}}\bigl(\frac{mr}{2}\bigr)\sinh\bigl(\frac{mr}{2}\bigr)\right)^2\,d\vartheta^2 \oplus \cosh^{\frac{4}{m}}\bigl(\frac{mr}{2}\bigr)g_F,
 \end{equation}
 where $r$ is the distance from $0\in B_R(0)$ and $d\vartheta^2$ is the standard metric on $S^1$.  Denote by $(X_R^{m+n+1},g)$ the Riemannian product of $(S^n,d\theta^2)$ and $(N_R,g_N)$.  Denote by $\iota\colon S^1\to\partial X_R$ and $\pi\colon\partial X_R\to S^1$ the inclusion and projection, respectively, of $S^1$, as parameterized by $\vartheta$, to $\partial X_R$.  Then
 \[ \sigma_1 = \frac{\ell+1}{2}, \qquad \sigma_2 \equiv 0, \qquad T_1 > 0 . \]
 Moreover, $g\rv_{T\partial X_R}$ is $2$-admissible and the boundary $\partial X_R$ is such that $H_2$ is a positive constant, $S_1>0$, and, as $R\to\infty$, it holds that
 \begin{align*}
  \mD(\pi^\ast\phi) & = o(1), \\
  H_2 & = \frac{2}{(m+n-1)(m+n-2)}\binom{m}{3} + \frac{m(n-m+1)}{m+n-1} + o(1), \\
  \iota^\ast S_1 & = \frac{m-1}{m+n-1}\tanh\bigl(\frac{mR}{2}\bigr)\,d\vartheta^2
 \end{align*}
 for all $\phi\in C^\infty(S^1)$.
\end{lem}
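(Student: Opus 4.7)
My overall approach follows the architecture of Lemma~\ref{lem:lcf_computations} but must account for the non-locally-conformally-flat ambient geometry. First I would verify that $(N^{m+1}, g_N)$ from~\eqref{eqn:flat_einstein_metric} is Einstein with $\Ric_{g_N} = -m\,g_N$; this is the classical Example~9.118(d) in Besse, where the Einstein condition on a warped product of this form over a Ricci-flat base reduces to coupled ODEs in $f$ and $\psi$ that are solved by the explicit formulas given. Since the dimensional constraints yield $m + 1 = \ell(\ell+1)/2$, Lemma~\ref{lem:einstein_products}(1) applied to the product of $(S^n, d\theta^2)$ and $(N, g_N)$ then delivers $\sigma_1^g = (\ell+1)/2$, $\sigma_2^g \equiv 0$, $T_1^g > 0$, and $g^{-1}P$ with eigenvalue $1/2$ on $TS^n$ and $-1/2$ on $TN$.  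For $2$-admissibility of $g|_{T\partial X_R}$, I would adapt the perturbation used in the proof of Lemma~\ref{lem:lcf_computations}, taking $u(p,q) := (1 + s\, r^2(q))/(1 + s R^2)$ for small $s > 0$; since $u \equiv 1$ on $\partial X_R$ and a direct perturbative expansion in $s$ shows that $\sigma_1^{g_u}, \sigma_2^{g_u} > 0$ throughout $X_R$ for $s$ sufficiently small, the conformal metric $g_u := u^{-2}g$ lies in $\Gamma_2^+$, yielding the required admissibility.

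Next I would carry out the boundary computations explicitly.  The outward unit normal is $\partial_r$, so the second fundamental form along $\partial X_R$ is $A = f(R) f'(R)\,d\vartheta^2 \oplus \psi(R)\psi'(R)\,g_F$ with vanishing contribution on $TS^n$, while $h^{-1}\iota^\ast P$ has eigenvalue $1/2$ on $TS^n$ and $-1/2$ on $TS^1 \oplus TF$.  Direct differentiation of~\eqref{eqn:flat_einstein_metric} yields $\psi'/\psi = \tanh(mR/2)$ and $f'/f = \tfrac{2-m}{2}\tanh(mR/2) + \tfrac{m}{2}\coth(mR/2)$, both of which tend to $1$ as $R \to \infty$.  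Plugging these eigenvalue data into~\eqref{eqn:Hk_defn} and~\eqref{eqn:Sk_defn} for $k = 2$ produces the claimed values via the same algebraic manipulations used in the proof of Lemma~\ref{lem:lcf_computations}; the invariance of $g$ under the natural isometric actions on each factor forces $H_2$ to be constant on $\partial X_R$, while positivity of $H_2$ and $S_1$ follows from the signs of the eigenvalue blocks together with $n - m + 1 = \ell + 3 > 0$.

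The remaining claim $\mD(\pi^\ast\phi) = o(1)$ is the most delicate.  Because $T_1^g$ acts as a positive scalar on each factor, the extension PDE~\eqref{eqn:linearized_extension} separates as $\ell\,\Delta_{S^n} v + (\ell+2)\Delta_N v = 0$.  For $\phi = \pi^\ast\varphi$ with $\varphi \in C^\infty(S^1)$, I would seek $v = v(r, \vartheta)$, reducing to a Dirichlet problem for $\Delta_N$ with boundary data $\varphi$.  Separating variables in Fourier modes $e^{ik\vartheta}$ yields a radial ODE whose regular solution $V_k$ satisfies, after multiplication by $f\psi^{m-1}$ and integration from $0$ to $R$,
\[
 f(R)\,\psi(R)^{m-1}\,V_k'(R) = k^2 \int_0^R \frac{\psi(r)^{m-1}}{f(r)}\,V_k(r)\,dr .
\]
The asymptotics $f(R)\psi(R)^{m-1} \sim e^{mR}/(2m)$ together with $\psi^{m-1}/f = O(e^{(m-2)R})$ then yield $V_k'(R) = O(k^2 e^{-2R})$ for each fixed $k$: the zeroth Fourier mode contributes zero, and all higher modes decay exponentially.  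The main obstacle I anticipate is controlling these mode-by-mode estimates uniformly in $k$ so as to conclude $\mD(\pi^\ast\phi) \to 0$ for a general $\phi$; this requires tracking the $k$-dependence of the ODE estimates sharply enough to sum the Fourier decomposition in the $C^{j-1,\alpha}(S^1)$-norm.
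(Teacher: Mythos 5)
Your overall strategy is correct and closely parallels the architecture of the paper's proof, and the verifications of the Einstein constant, the $\sigma_k$-data, the second fundamental form, and the $H_2$/$S_1$ formulas all look sound. There are, however, two places where you go a genuinely different route from the paper, and both are worth noting.

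\textbf{Admissibility.} You propose the polynomial perturbation $u = (1+sr^2)/(1+sR^2)$ imported from Lemmas~\ref{lem:lcf_computations} and~\ref{lem:lcf_computations2}. The paper instead takes $u = \cosh(cr)$ for a fixed small $c > 0$, computes the perturbation tensor $B = g^{-1}\bigl(u^{-1}\nabla^2u - \tfrac12 u^{-2}\lv\nabla u\rv^2 g\bigr)$ explicitly, shows that $\sigma_1(B),\sigma_2(B)>0$ everywhere on $[0,\infty)$ using only bounded functions of $r$ (powers of $c\tanh(cr)$), and then invokes convexity of $\overline{\Gamma_2^+}$: since $g^{-1}P^g \in \overline{\Gamma_2^+}$ and $B\in\Gamma_2^+$, their sum lies in $\Gamma_2^+$. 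The advantage of the paper's choice is that a single $c$ works uniformly for all $R$; the normalization $U := u(r)/u(R)$ then restricts to $X_R$ with $U\rv_{\partial X_R}=1$. Your $u$ depends on $R$ through $sR^2$, and the $O(s^2)$ error must be controlled on an interval whose length grows as $R\to\infty$, with coefficients like $mr\coth(mr)\sim mr$ that themselves grow; you would need $s$ to depend on $R$ and to justify the expansion is uniform on $[0,R]$. It is probably fixable for each fixed $R$, but the paper's hyperbolic choice sidesteps the issue entirely.

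\textbf{The decay $\mD(\pi^\ast\phi) = o(1)$.} Here your argument is actually tighter than the paper's. The paper plugs a single radial mode into the interior equation and estimates $U'/U$ via a Riccati-type differential inequality $\tfrac{d}{dr}(U'/U) \leq -m\coth(mr)\tfrac{U'}{U} - \tfrac{1}{f}$, concluding $U'(R)/U(R)\to 0$. Your integral identity
\[ f(R)\psi(R)^{m-1}V_k'(R) = k^2\int_0^R\frac{\psi^{m-1}}{f}V_k\,dr, \]
obtained by multiplying by the integrating factor $f\psi^{m-1}$, is cleaner and yields the explicit rate $V_k'(R)=O(k^2e^{-2R})$. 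The obstacle you flag at the end — uniformity in $k$ — is in fact a non-issue: because $0\leq V_k\leq 1$ (monotonicity plus the normalization $V_k(R)=1$), the $k$-dependence of the bound is exactly the displayed $k^2$ factor, with a $k$-independent constant. For a fixed smooth $\phi$ the Fourier coefficients decay faster than any polynomial, so multiplying by $k^2$ (and, for the $C^{j-1,\alpha}$ norm, by further powers of $k$) costs nothing, and the series converges to give $\mD(\pi^\ast\phi)=O(e^{-2R})$ in the required norm. The one place you should be a little careful is the behavior of $\int_0^R(\psi^{m-1}/f)V_k\,dr$ near $r=0$, where $\psi^{m-1}/f\sim 1/r$; the vanishing $V_k\sim r^{k}$ handles it for each $k\geq1$, but you should split off $\int_0^1$ and note it contributes a term which is dominated by $(f(R)\psi^{m-1}(R))^{-1}\sim e^{-mR}$, which is negligible compared to the $e^{-2R}$ main term.

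Everything else — the Einstein verification via Besse 9.118(d), the reduction to Lemma~\ref{lem:einstein_products}, the computation of $f'/f$ and $\psi'/\psi$, the symmetry argument that forces $H_2$ constant, and the positivity of $H_2$ and $S_1$ from $n-m+1=\ell+3>0$ and the signs of the eigenvalue blocks — matches the paper.
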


\begin{proof}
 It is straightforward to check that $(N_R,g_N)$ is Einstein with $\Ric_{g_N}=-mg_N$ (cf.\ \cite[Example~9.118(d)]{Besse}).  The claims about the $\sigma_k$-curvatures and the first Newton tensor follow from Lemma~\ref{lem:einstein_products}.

 We now show that $g\rv_{T\partial X_R}$ is $2$-admissible.  It suffices to show that there is a function $u=u(r)$ such that $u^{-2}g\in\Gamma_2^+$, as then the restriction of the function $U=u(r)/u(R)$ to $X_R$ is such that $U^{-2}g\in\Gamma_2^+$ and $U\rv_{\partial X_R}=1$.  Given $c\in\bR_+$, define $u:=\cosh(cr)$ and set $\hg:=u^{-2}g$.   Denote
 \[ B := g^{-1}\left(u^{-1}\nabla^2u - \frac{1}{2}u^{-2}\lv\nabla u\rv^2 g\right), \]
 and recall that $g^{-1}P^{\hg} = g^{-1}P^g + B$.  A straightforward computation shows that
 \begin{align*}
  B & = c^2\Id_{T\bR_+} + c\tanh(cr)\left(\coth\bigl(\frac{mr}{2}\bigr)+(m-2)\csch(mr)\right)\Id_{TS^1} \\
   & \quad + c\tanh(cr)\tanh\bigl(\frac{mr}{2}\bigr)\Id_{TF} - \frac{c^2}{2}\tanh^2(cr)\Id_{TX} .
 \end{align*}
 We compute that
 \begin{align*}
  \sigma_1(B) & = c^2 + mc\tanh(cr)\coth(mr) - \frac{m+n+1}{2}c^2\tanh^2(cr), \\
  \sigma_2(B) & = mc^3\tanh(cr)\coth(mr) + \frac{m(m-1)}{2}c^2\tanh^2(cr) - \frac{m+n}{2}c^4\tanh^2(cr) \\
   & \quad - \frac{m(m+n)}{2}c^3\tanh^3(cr)\coth(mr) + \frac{(m+n)(m+n+1)}{8}c^4\tanh^4(cr) .
 \end{align*}
 It follows that there is a $c>0$ sufficiently close to zero such that $\sigma_1(B),\sigma_2(B)>0$.  It follows from the convexity of $\overline{\Gamma_2^+}$ that for this choice of $c$, it holds that $g_u\in\Gamma_2^+$, as desired.

 Next we consider the restriction of the Dirichlet-to-Neumann operator~\eqref{eqn:dirichlet_to_neumann} to functions $\phi=\phi(\vartheta)$ which depend only on the $S^1$-factor in $N$.  From Lemma~\ref{lem:einstein_products}, we see that $v_\phi$ is the solution to
 \[ \begin{cases}
     \Delta_{g_N}v = 0, & \quad \text{in $X_R$}, \\
     v = \phi, & \text{on $\partial X_R$} .
    \end{cases} \]
 By separation of variables, it holds that $v=u(r)\phi(\vartheta)$ where $u(r)$ is the unique smooth solution of the ODE
 \begin{equation}
  \label{eqn:flat_ode}
  u^{\prime\prime} + m\coth(mr)u^\prime + \left(\frac{m}{2}\sech^{\frac{2-m}{m}}\bigl(\frac{mr}{2}\bigr)\csch\bigl(\frac{mr}{2}\bigr)\right)u = 0
 \end{equation}
 on $[0,R]$ with $u(R)=1$.  In fact, we note that there is a unique solution $U$ to~\eqref{eqn:flat_ode} on $[0,\infty)$ with $U(0)=1$, and then $u(r)=U(r)/U(R)$.  Applying Lemma~\ref{lem:einstein_products} again yields
 \[ \mD(\phi) = \frac{\ell+2}{2}u^\prime(R)\phi = \frac{(\ell+2)\phi}{2}\frac{U^\prime(R)}{U(R)} . \]
 On the other hand, it follows from~\eqref{eqn:flat_ode} that
 \[ \frac{d}{dr}\left(\frac{U^\prime}{U}\right) \leq -m\coth(mr)\frac{U^\prime}{U} - \left(\frac{m}{2}\sech^{\frac{2-m}{m}}\bigl(\frac{mr}{2}\bigr)\csch\bigl(\frac{mr}{2}\bigr)\right) , \]
 from which it readily follows that $U^\prime/U\to0$ as $r\to\infty$.  In particular, $\mD(\phi)=o(1)$ as $R\to\infty$.

 Set $h:=g\rv_{T\partial X_R}$ and note that the second fundamental form $A$ of $\partial X_R$ is such that
 \[ h^{-1}A = \left(\coth\bigl(\frac{mR}{2}\bigr)+(m-2)\csch(mR)\right)\Id_{TS^1} \oplus \tanh\bigl(\frac{mR}{2}\bigr)\Id_{TF} . \]
 It follows that
 \begin{align*}
  \sigma_{3,0} & = \binom{m-1}{3}\tanh^3\bigl(\frac{mR}{2}\bigr) + \binom{m-1}{2}\left(1+\frac{m-2}{2}\sech^2\bigl(\frac{mR}{2}\bigr)\right)\tanh\bigl(\frac{mR}{2}\bigr), \\
  \sigma_{2,1} & = \frac{m(n-m+1)}{2}\coth(mR), \\
  \iota^\ast T_{1,0} & = (m-1)\tanh\bigl(\frac{mR}{2}\bigr)\,d\vartheta^2 .
 \end{align*}
 Combining this with the definitions of $H_2$ and $S_1$ yield the final conclusions.
\end{proof}

By applying Theorem~\ref{thm:bifurcation} to the examples of Lemma~\ref{lem:flat_computations}, we obtain examples with boundary geometry the Riemannian product of a sphere and a Ricci flat manifold of varying sizes which contains infinitely many bifurcation points for~\eqref{eqn:constant_Hk_problem}.  Restricting these examples to the boundary proves Theorem~\ref{thm:nonuniqueness_zero}.

\begin{thm}
 \label{thm:flat}
 Fix $2\leq\ell\in\bN$ and set $n=\frac{(\ell+1)(\ell+2)}{2}$ and $m=\frac{(\ell-1)(\ell+2)}{2}$.  Denote by $(S^n,d\theta^2)$ the round $n$-sphere of constant sectional curvature $1$ and let $(F^m,g_F)$ be a Ricci flat manifold.  Given $R\in\bR_+$, denote $N_R^{m+1}:=B_R(0)\times F^{m-1}$, where $B_R(0)$ is the closed Euclidean ball of radius $R$ in $\bR^2$, and let $g_N$ be the metric~\eqref{eqn:flat_einstein_metric} on $N_R$.  Denote by $(X_R,g)$ the Riemannian product of $(S^n,d\theta^2)$ and $(N_R,g_N)$.  Then, $(X_R,g)$ is a solution of~\eqref{eqn:constant_Hk_problem} for all $R\in\bR_+$.  Moreover, there is a sequence $(R_j)_j\subset\bR_+$ of bifurcation points for~\eqref{eqn:constant_Hk_problem} for which $R_j\to\infty$ as $j\to\infty$.
\end{thm}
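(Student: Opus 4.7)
The plan is to verify that the family $\{(X_R,g)\}_{R\in\bR_+}$ meets the hypotheses of Corollary~\ref{cor:bifurcation_genl_volume} after identifying all $X_R$ with a fixed manifold via scaling, and then to show that the index of the Jacobi operator $D\mF^{g_R}$ is unbounded as $R\to\infty$. By Lemma~\ref{lem:flat_computations}, each $(X_R,g)$ has $\sigma_2\equiv 0$ with $T_1>0$ in the interior, is $2$-admissible on the boundary, and has constant positive $H_2$ together with $S_1>0$; hence every member of the family already solves~\eqref{eqn:constant_Hk_problem}. Pulling back along the diffeomorphism $\Phi_R\colon X_1\to X_R$, $(r,\vartheta,y,\theta)\mapsto(Rr,\vartheta,y,\theta)$, yields a smooth one-parameter family of metrics on the fixed manifold $X_1$, which combined with Theorem~\ref{thm:fredholm} reduces the result to a spectral statement about $D\mF^{g_R}$.

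The crucial claim is that $\Ind\bigl(D\mF^{g_R}\bigr)\to\infty$ as $R\to\infty$. I would argue this by restricting $D\mF^{g_R}$ to the subspace $\pi_1^\ast C^\infty(S^1)$, where $\pi_1\colon\partial X_R\to S^1$ projects onto the $S^1$ factor in $\partial B_R(0)\times F^{m-1}\subset\partial X_R$. By Corollary~\ref{cor:boundary_dF} combined with the asymptotics of Lemma~\ref{lem:flat_computations}, on a Fourier mode $e^{ik\vartheta}$ the operator $D\mF^{g_R}$ acts as the sum of an intrinsic elliptic contribution of size $O(\alpha_R^{-4}k^2)$ (here $\alpha_R$ is the induced $S^1$-circumference, which tends to infinity, while $\iota^\ast S_1$ stabilizes), a Dirichlet-to-Neumann contribution that vanishes in $R$ uniformly on bounded-frequency subspaces, and the zeroth-order term $-3H_2$, whose limit is a strictly negative constant. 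Hence for each $N\in\bN$ the first $N$ nontrivial Fourier modes yield negative eigenvalues of $D\mF^{g_R}$ for all sufficiently large $R$, forcing $\Ind\bigl(D\mF^{g_R}\bigr)\geq N$.

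Since $\Ind\bigl(D\mF^{g_R}\bigr)$ is integer-valued, unbounded, and finite for every fixed $R$ by Theorem~\ref{thm:fredholm}, one can select a sequence $R_1<R_2<\cdots\to\infty$ of values at which the index strictly increases between successive terms and at which the Jacobi operators are nondegenerate (nondegeneracy is generic in $R$, as it fails only where the index jumps). Applying Corollary~\ref{cor:bifurcation_genl_volume} on each interval $[R_j,R_{j+1}]$ produces a bifurcation instant in its interior, giving the desired sequence of bifurcation points tending to infinity.

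The main obstacle is the spectral asymptotic in the second paragraph. Because $\iota^\ast S_1$ does not degenerate as $R\to\infty$, the negativity cannot come from a vanishing of the elliptic symbol; it must instead be extracted from the blow-up of the intrinsic scale $\alpha_R$ of the $S^1$ factor, so that finitely many Fourier modes contribute arbitrarily small elliptic energy against the strictly negative potential $-3H_2$. Making this rigorous requires extending the Dirichlet-to-Neumann analysis of Lemma~\ref{lem:flat_computations} beyond the single mode considered there to all bounded Fourier modes on $S^1$, and verifying that the corresponding extension ODE still yields a Dirichlet-to-Neumann eigenvalue tending to zero in a manner uniform in the frequency parameter.
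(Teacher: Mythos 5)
Your proposal follows the paper's argument essentially verbatim: restrict $D\mF^{g_R}$ to functions pulled back from $S^1$, show that both the second-order and Dirichlet-to-Neumann contributions vanish as $R\to\infty$ while the zeroth-order term $-3H_2$ tends to a strictly negative constant, conclude that the index diverges, and extract bifurcation instants from the fiberwise bifurcation corollary; your choice of Corollary~\ref{cor:bifurcation_genl_volume} with an explicit rescaling diffeomorphism, rather than the paper's Corollary~\ref{cor:bifurcation}, is if anything slightly more careful, since the warped-product ansatz $dr^2\oplus f^2(r)\,d\theta^2\oplus\psi^2(r)\,g_N$ in the latter does not literally accommodate the two distinct warping factors on the $S^1$ and $F^{m-1}$ factors of~\eqref{eqn:flat_einstein_metric}. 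One quantitative slip: the elliptic term on the mode $e^{ik\vartheta}$ scales as $O(\alpha_R^{-2}k^2)$, not $O(\alpha_R^{-4}k^2)$; the explicit coefficient in the paper's proof, $\tfrac{m^2(m-1)}{4(m+n-1)}\sech^{4/m}\bigl(\tfrac{mR}{2}\bigr)\coth\bigl(\tfrac{mR}{2}\bigr)\sim Ce^{-2R}$, carries a single factor of the inverse-squared circumference (supplied by the boundary Laplacian), while the $(1,1)$-form of $\iota^\ast S_1$ stabilizes; this does not affect the conclusion. Finally, the concern you raise at the end is a genuine observation about the paper's exposition: Lemma~\ref{lem:flat_computations} asserts $\mD(\pi^\ast\phi)=o(1)$ for all $\phi\in C^\infty(S^1)$, but the separation-of-variables ODE~\eqref{eqn:flat_ode} as written carries no visible Fourier-frequency parameter, so a complete verification should reinstate the $k^2$-dependent zeroth-order term and check $U^\prime/U\to 0$ uniformly over each bounded set of frequencies --- a routine but genuinely needed addition.
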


\begin{proof}
 Applying Lemma~\ref{lem:flat_computations} to $(X_R,g)$ implies that, $(X_R,g)$ is a solution of~\eqref{eqn:constant_Hk_problem} for all $R\in\bR_+$.  Combining~\eqref{eqn:flat_einstein_metric} and Lemma~\ref{lem:flat_computations} yields
 \[ D\mF(\pi^\ast\phi)\rv_{S^1} = \mD(\pi^\ast\phi)\rv_{S^1} - \frac{m^2(m-1)}{4(m+n-1)}\sech^{\frac{4}{m}}\bigl(\frac{mR}{2}\bigr)\coth\bigl(\frac{mR}{2}\bigr)\Delta_{S^1}\phi - 3H_2\phi, \]
 where $\mD$ is the Dirichlet-to-Neumann operator~\eqref{eqn:dirichlet_to_neumann} and $\pi\colon\partial X_R\to S^1$ is the projection map.  It follows from the asymptotics of Lemma~\ref{lem:flat_computations} that the index of the restriction of $D\mF$ to $S^1$, and hence the index of $D\mF$ itself, tends to $\infty$ as $R\to\infty$.  Corollary~\ref{cor:bifurcation} then yields, the existence of the sequence $(R_j)_j$ of bifurcation points.
\end{proof}

\bibliographystyle{abbrv}
\bibliography{boundary_nonunique_bib}

\newcommand{\noopsort}[1]{}
\begin{thebibliography}{10}

\bibitem{AgmonDouglisNirenberg1959}
S.~Agmon, A.~Douglis, and L.~Nirenberg.
\newblock Estimates near the boundary for solutions of elliptic partial
  differential equations satisfying general boundary conditions. {I}.
\newblock {\em Comm. Pure Appl. Math.}, 12:623--727, 1959.

\bibitem{Besse}
A.~L. Besse.
\newblock {\em Einstein manifolds}, volume~10 of {\em Ergebnisse der Mathematik
  und ihrer Grenzgebiete (3)}.
\newblock Springer-Verlag, Berlin, 1987.

\bibitem{BettiolPiccione2013}
R.~G. Bettiol and P.~Piccione.
\newblock Bifurcation and local rigidity of homogeneous solutions to the
  {Y}amabe problem on spheres.
\newblock {\em Calc. Var. Partial Differential Equations}, 47(3-4):789--807,
  2013.

\bibitem{BettiolPiccioneSantoro2016}
R.~G. Bettiol, P.~Piccione, and B.~Santoro.
\newblock Bifurcation of periodic solutions to the singular {Y}amabe problem on
  spheres.
\newblock {\em J. Differential Geom.}, 103(2):191--205, 2016.

\bibitem{BettiolPiccioneSire2018}
R.~G. Bettiol, P.~Piccione, and Y.~Sire.
\newblock Nonuniqueness of conformal metrics with constant {$Q$}-curvature.
\newblock arXiv:1806.01373, \noopsort{2105}preprint.

\bibitem{Bohm1999}
C.~B{\"o}hm.
\newblock Non-compact cohomogeneity one {E}instein manifolds.
\newblock {\em Bull. Soc. Math. France}, 127(1):135--177, 1999.

\bibitem{BransonGover2008}
T.~P. Branson and A.~R. Gover.
\newblock Variational status of a class of fully nonlinear curvature
  prescription problems.
\newblock {\em Calc. Var. Partial Differential Equations}, 32(2):253--262,
  2008.

\bibitem{Brendle2008}
S.~Brendle.
\newblock Blow-up phenomena for the {Y}amabe equation.
\newblock {\em J. Amer. Math. Soc.}, 21(4):951--979, 2008.

\bibitem{BrendleChen2014}
S.~Brendle and S.-Y.~S. Chen.
\newblock An existence theorem for the {Y}amabe problem on manifolds with
  boundary.
\newblock {\em J. Eur. Math. Soc. (JEMS)}, 16(5):991--1016, 2014.

\bibitem{BrendleMarques2009}
S.~Brendle and F.~C. Marques.
\newblock Blow-up phenomena for the {Y}amabe equation. {II}.
\newblock {\em J. Differential Geom.}, 81(2):225--250, 2009.

\bibitem{CaffarelliNirenbergSpruck1985}
L.~Caffarelli, L.~Nirenberg, and J.~Spruck.
\newblock The {D}irichlet problem for nonlinear second-order elliptic
  equations. {III}. {F}unctions of the eigenvalues of the {H}essian.
\newblock {\em Acta Math.}, 155(3-4):261--301, 1985.

\bibitem{CaseWang2016s}
J.~S. Case and Y.~Wang.
\newblock Boundary operators associated to the $\sigma_k$-curvature.
\newblock arXiv:1707.04377, \noopsort{2204}preprint.

\bibitem{Chen2009s}
S.-Y.~S. Chen.
\newblock Conformal deformation on manifolds with boundary.
\newblock {\em Geom. Funct. Anal.}, 19(4):1029--1064, 2009.

\bibitem{deLimaPiccioneZedda2012}
L.~L. de~Lima, P.~Piccione, and M.~Zedda.
\newblock On bifurcation of solutions of the {Y}amabe problem in product
  manifolds.
\newblock {\em Ann. Inst. H. Poincar\'e Anal. Non Lin\'eaire}, 29(2):261--277,
  2012.

\bibitem{DelaTorreDelPinoGonzalezWei2017}
A.~DelaTorre, M.~del Pino, M.~d.~M. Gonz\'alez, and J.~Wei.
\newblock Delaunay-type singular solutions for the fractional {Y}amabe problem.
\newblock {\em Math. Ann.}, 369(1-2):597--626, 2017.

\bibitem{Diaz2018}
E.~D.~C. Diaz.
\newblock Bifurcation results for the {Y}amabe problem on {R}iemannian
  manifolds with boundary.
\newblock arXiv:1701.07552, \noopsort{2099}preprint.

\bibitem{DiazMoreira2018}
E.~D.~C. Diaz and A.~C. d.~S. Moreira.
\newblock On the bifurcation of solutions of the {Y}amabe problem in product
  manifolds with minimal boundary.
\newblock {\em Adv. Nonlinear Anal.}, 7(1):1--14, 2018.

\bibitem{Escobar1988}
J.~F. Escobar.
\newblock Sharp constant in a {S}obolev trace inequality.
\newblock {\em Indiana Univ. Math. J.}, 37(3):687--698, 1988.

\bibitem{Escobar1992a}
J.~F. Escobar.
\newblock Conformal deformation of a {R}iemannian metric to a scalar flat
  metric with constant mean curvature on the boundary.
\newblock {\em Ann. of Math. (2)}, 136(1):1--50, 1992.

\bibitem{Escobar1992}
J.~F. Escobar.
\newblock The {Y}amabe problem on manifolds with boundary.
\newblock {\em J. Differential Geom.}, 35(1):21--84, 1992.

\bibitem{Escobar1996}
J.~F. Escobar.
\newblock Conformal metrics with prescribed mean curvature on the boundary.
\newblock {\em Calc. Var. Partial Differential Equations}, 4(6):559--592, 1996.

\bibitem{GilbargTrudinger2001}
D.~Gilbarg and N.~S. Trudinger.
\newblock {\em Elliptic partial differential equations of second order}.
\newblock Classics in Mathematics. Springer-Verlag, Berlin, 2001.
\newblock Reprint of the 1998 edition.

\bibitem{GonzalezQing2010}
M.~d.~M. Gonz{\'a}lez and J.~Qing.
\newblock Fractional conformal {L}aplacians and fractional {Y}amabe problems.
\newblock {\em Anal. PDE}, 6(7):1535--1576, 2013.

\bibitem{Guan2007}
B.~Guan.
\newblock Conformal metrics with prescribed curvature functions on manifolds
  with boundary.
\newblock {\em Amer. J. Math.}, 129(4):915--942, 2007.

\bibitem{GuillarmouGuillope2007}
C.~Guillarmou and L.~Guillop{\'e}.
\newblock The determinant of the {D}irichlet-to-{N}eumann map for surfaces with
  boundary.
\newblock {\em Int. Math. Res. Not. IMRN}, (22):Art. ID rnm099, 26, 2007.

\bibitem{GurskyStreets2016b}
M.~Gursky and J.~Streets.
\newblock Variational structure of the {$v_{\frac n2}$}-{Y}amabe problem.
\newblock {\em Differential Geom. Appl.}, 56:187--201, 2018.

\bibitem{GurskyStreets2015}
M.~J. Gursky and J.~Streets.
\newblock A formal riemannian structure on conformal classes and the inverse
  {G}auss curvature flow.
\newblock arXiv:1507.04781, \noopsort{2097}preprint.

\bibitem{GurskyStreets2016}
M.~J. Gursky and J.~Streets.
\newblock A formal riemannian structure on conformal classes and uniqueness for
  the $\sigma_2$-{Y}amabe problem.
\newblock arXiv:1603.07005, \noopsort{2098}preprint.

\bibitem{KhuriMarquesSchoen2009}
M.~A. Khuri, F.~C. Marques, and R.~M. Schoen.
\newblock A compactness theorem for the {Y}amabe problem.
\newblock {\em J. Differential Geom.}, 81(1):143--196, 2009.

\bibitem{Lopatinskii1953}
Y.~B. Lopatinski{\u\i}.
\newblock On a method of reducing boundary problems for a system of
  differential equations of elliptic type to regular integral equations.
\newblock {\em Ukrain. Mat. \v Z.}, 5:123--151, 1953.

\bibitem{Marques2005}
F.~C. Marques.
\newblock Existence results for the {Y}amabe problem on manifolds with
  boundary.
\newblock {\em Indiana Univ. Math. J.}, 54(6):1599--1620, 2005.

\bibitem{Marques2007}
F.~C. Marques.
\newblock Conformal deformations to scalar-flat metrics with constant mean
  curvature on the boundary.
\newblock {\em Comm. Anal. Geom.}, 15(2):381--405, 2007.

\bibitem{Moreira2018}
A.~C. Moreira.
\newblock Harmonically free group actions and equivariant bifurcation in the
  {Y}amabe problem.
\newblock {\em Differential Geom. Appl.}, 56:234--259, 2018.

\bibitem{Petean2010}
J.~Petean.
\newblock Metrics of constant scalar curvature conformal to {R}iemannian
  products.
\newblock {\em Proc. Amer. Math. Soc.}, 138(8):2897--2905, 2010.

\bibitem{Schoen_Yau_book}
R.~Schoen and S.-T. Yau.
\newblock {\em Lectures on differential geometry}.
\newblock Conference Proceedings and Lecture Notes in Geometry and Topology, I.
  International Press, Cambridge, MA, 1994.

\bibitem{Schoen1989}
R.~M. Schoen.
\newblock Variational theory for the total scalar curvature functional for
  {R}iemannian metrics and related topics.
\newblock In {\em Topics in calculus of variations ({M}ontecatini {T}erme,
  1987)}, volume 1365 of {\em Lecture Notes in Math.}, pages 120--154.
  Springer, Berlin, 1989.

\bibitem{Shapiro1953}
Z.~{Shapiro}.
\newblock {\"Uber allgemeine Randwertaufgaben f\"ur Gleichungen vom
  elliptischen Typus.}
\newblock {\em {Izv. Akad. Nauk SSSR, Ser. Mat.}}, 17:539--562, 1953.

\bibitem{SmollerWasserman1990}
J.~Smoller and A.~G. Wasserman.
\newblock Bifurcation and symmetry-breaking.
\newblock {\em Invent. Math.}, 100(1):63--95, 1990.

\bibitem{Viaclovsky2000}
J.~A. Viaclovsky.
\newblock Conformal geometry, contact geometry, and the calculus of variations.
\newblock {\em Duke Math. J.}, 101(2):283--316, 2000.

\end{thebibliography}
\end{document}